\newenvironment{proof}[1][Proof]{\begin{trivlist}
\item[\hskip \labelsep {\bfseries #1}]}{\end{trivlist}}
\newtheorem{theorem}{Theorem}[section]
\newtheorem{lemma}[theorem]{Lemma}
\newtheorem{proposition}[theorem]{Proposition}
\newtheorem{remark}[theorem]{Remark}
\numberwithin{equation}{section}
\numberwithin{theorem}{section}
\newcommand{\ex }{\mathbb{E}}
\newcommand{\be}{\begin{equation}}
\newcommand{\ee}{\end{equation}}
\newcommand{\Prob}{\mathbb{P}}
\newcommand{\R}{\mathbb{R}}
\newcommand{\esp}{\mathbb{E}}
\newcommand{\eps}{\varepsilon}
\newcommand{\Holder}{H\"{o}lder }
\newcommand*{\qedBl}{\hfill\ensuremath{\blacksquare}}%
\title{
On small--noise equations with degenerate limiting system arising from volatility models
\thanks{\emph{date}: \today.
Corresponding authors: giovanniconfort@gmail.com, demarco@cmap.polytechnique.fr
\newline \indent \emph{Key words and phrases}: pathwise large deviations, square-root diffusions, tail asymptotics.
}}
\author{G. Conforti, S. De Marco, J--D. Deuschel
\\
Universit\"{a}t Potsdam, Ecole Polytechnique, TU-Berlin}
\renewcommand\footnotemark{}
\date{\today}
\begin{document}

\maketitle

\begin{abstract}
\noindent
The one-dimensional SDE with non Lipschitz diffusion coefficient
\begin{equation}\label{diffusion}
dX_{t} = b(X_{t})dt + \sigma X_{t}^{\gamma} dB_{t}, \quad X_{0}=x, \quad \gamma<1
\end{equation}
is widely studied in mathematical finance.
Several works have proposed asymptotic analysis of densities and implied volatilities in models involving instances of \eqref{diffusion}, based on a careful implementation of saddle-point methods and (essentially) the explicit knowledge of Fourier transforms.
Recent research on tail asymptotics for heat kernels \cite{DFJV:II} suggests to work with the rescaled variable
$ X^{\varepsilon}:=\varepsilon^{1/(1-\gamma)} X$ :
while allowing to turn a space asymptotic problem into a small-$\varepsilon$ problem with fixed terminal point,
the process $X^{\varepsilon}$ satisfies a SDE in Wentzell--Freidlin form (i.e. with driving noise $\varepsilon dB$).
We prove a pathwise large deviation principle for the process $X^{\varepsilon}$ as $\varepsilon \to 0$.
As it will become clear, the limiting ODE governing the large deviations admits infinitely many solutions, a non-standard situation in the Wentzell--Freidlin theory.
As for applications, the $\eps$-scaling allows to derive leading order asymptotics for path functionals of the process: while on the one hand the resulting formulae are confirmed by the CIR-CEV benchmarks, on the other hand the large deviation approach (i) applies to equations with a more general drift term amd (ii) potentially opens the way to heat kernel analysis for higher-dimensional diffusions involving \eqref{diffusion} as a component.
\end{abstract}

\section{Introduction}
The Wentzell--Freidlin large deviation theory studies the asymptotic behavior of the distribution on path space of the solution to the equation $dX^{\eps}_t = b(X^{\eps})dt + \eps \sigma(X^{\eps}_{t})dB_{t}, \ X^{\eps}_{0}=x$ as $\eps \to 0$, where $B$ is a Brownian motion.
When the coefficients $b$ and $\sigma$ are, say, Lipschitz functions, it is easy to see (with an application of Gronwall's Lemma) that the trajectories of $X^{\eps}$ converge in law to the deterministic solution of the ordinary differential equation 
$
d\varphi_t=b(\varphi_t)dt, \varphi_0=x.
$
The theory of large deviations accounts for the rate of this convergence: denoting $\varphi(h)$ the unique solution of the ODE
$
d\varphi_t=b(\varphi_t)dt +\sigma(\varphi_t)dh_t, \varphi_0=x
$
controlled by an absolutely continuous path $h$ with square integrable derivative $\dot{h}$, then the large deviation principle (LDP)
\[
W(X^{\eps} \in \Gamma ) \approx e^{-\frac1{\eps^2} \inf_{\phi \in \Gamma} I(\phi)}
\]
holds for subsets $\Gamma$ of $C([0,T])$, where $W$ stands for the Wiener measure.\footnote{The precise statement here is $-\inf_{\phi \in \accentset{\circ}{\Gamma}} I(\phi)
\le
\liminf_{\eps \to 0 }\eps^2 \log W(X^{\eps} \in \Gamma )
\le
\limsup_{\eps \to 0 }\eps^2 \log W(X^{\eps} \in \Gamma )
\le -\inf_{\phi \in \overline{\Gamma}} I(\phi)$.}
The rate function $I$ is given by $I(\phi)=\frac12|\dot{h}|^2_{L^2}$, where $h$ is the control steering the trajectory of the deterministic system along the given path $\phi$, that is $\varphi(h)=\phi$.
When the diffusion coefficient $\sigma$ is invertible, the control $h$ is identified by $\dot{h}_t=\sigma(\varphi_t)^{-1} (\dot{\varphi}_t-b(\varphi_t))$, yielding the typical form of the rate function \[
I(\phi)=\frac{1}{2}\int_{0}^{T} \frac{(\dot{\phi}_{t} - b(\phi_{t}))^{2}}{\sigma(\phi_{t})^{2}}dt.
\]

The intuition behind such a result is that we can write $X^{\varepsilon} (\omega) = X \left( \varepsilon \omega \right)$, where $X$ is the `pathwise' solution of $dX = b(X)dt +  \sigma(X) dB, X_{0} = x$.
If we accept that such a map $X$ exists and is regular enough, then the contraction principle in conjunction with Schilder's theorem for large deviations of Brownian paths \cite[Chap 1]{DEUSCHEL} provides the LDP and the rate function for $X^{\eps}$.
The standard assumptions under which such a program is carried are conditions of global Lipschitz continuity and ellipticity for the coefficients, see \cite{DEMBO,DEUSCHEL}.
Several works have aimed at weakening these assumptions and extending the class of equations for which the LDP holds. Dependence on $\varepsilon$ in both the drift and the starting point can be introduced, and global Lipschitz continuity can be replaced with (essentially) local Lipschitz-continuity and conditions for the non explosion of the solution (building on the idea of Azencott \cite{AZENCOTT} to exploit the quasi-continuity property of the It\^o map, that only relies on local properties of the equation coefficients).
We refer to \cite{BALDI} for a nice recent summary of sets of conditions under which the Wentzell--Freidlin estimate holds.

Recent research on heat kernel asymptotics \cite{DFJV:II} focuses on the tail behavior for correlated stochastic volatility models.
Exploiting the space-scaling properties of the log-price process $Y_t$ in some parametric models (namely: there exists $\theta>0$ such that the rescaled variable $Y^{\eps}_t := \eps^{\theta} Y_t$ has the same law as the log-price in a stochastic volatility model with driving noise $\eps dB_t$), the approach of \cite{DFJV:II} is to convert the asymptotic problem for the tail distribution, $W(Y_t>R)$ as $R \to \infty$, to the problem of small-noise probabilities, $W(Y^{\eps}_t>1)$ as $\eps \to 0$.
Then, a large deviation principle for the rescaled process serves as a building block to study the asymptotic behavior of the corresponding heat kernel (using the tools of Malliavin calculs and the Laplace method on path space, see \cite{Bism, BenA}).
This approach can be fully justified, and explicit computations are possible, for the stochastic volatility model of Stein--Stein \cite{SteinStein} (also known as Sch{\"o}bel--Zhu \cite{SchZhu} in the correlated case), where the stochastic volatility follows an Ornstein--Uhlenbeck process with constant diffusion coefficient, which is the main case-study of \cite{DFJV:II}.
As pointed out in \cite[Section 5.3]{DFJV:II}, in the framework of models where the volatility has square-root diffusion coefficient (main example: Heston), or more generally a diffusion coefficient of the form $x^{\gamma}$, $\gamma<1$ (as in \cite{AP} and \cite{LionsMus}), such a space-scaling approach leads to a situation where the same approach is not justified anymore (and a formal application of the resulting expansion even leads to a wrong conclusion).
As from \cite[Section 5.3]{DFJV:II}, ``\emph{curiously then even a large deviation principle for (the rescaled volatility process) as given above presently lacks justification}''.

To be more specific, consider the equation $dX_t = (\alpha + \beta X_t)dt + \sigma X_t^{\gamma} dB_t$ with positive initial condition $X_0=x>0$.
Looking for a value of $\theta$ such that $\eps^{\theta} X$ satisfies an equation with small-noise $\eps$ leads to define the rescaled process $X^{\varepsilon}:=\varepsilon^{1/(1-\gamma)} X$, which indeed satisfies the equation
\begin{equation} \label{scaledCIRCEV}
dX^{\varepsilon}_{t} =
(\alpha^{\eps} + \beta X^{\eps}_t)dt
+ \varepsilon \sigma (X^{\varepsilon}_{t})^{\gamma} dB_{t},
\qquad
X^{\varepsilon}_{0}=x^{\varepsilon}
\end{equation}  
with
\[
\alpha^{\varepsilon} :=  \varepsilon^{1/(1-\gamma)} \alpha
\qquad
x^{\varepsilon} := \varepsilon^{1/(1-\gamma)}x.
\]
Of course, this change of variables allows to write $W(X_t>R)=W(X^{\eps}_t>1)$ using $\eps=R^{-1/(1-\gamma)}$.
As mentioned above, the question is whether a large deviation principle holds \emph{at all} for $W(X^{\eps}_t \in \cdot)$ as $\eps \to 0$.
Note that both the initial condition $x^{\eps}_0$ and the constant term $\alpha^{\eps}$ in the drift coefficient tend to zero as $\eps\to 0$.
On the one hand, it is not difficult to see that $X^{\eps} \to 0$ in law with respect to the uniform topology on $C([0,T])$. 
On the other hand, writing down formally the limiting ODE that should govern the large deviations, one gets
\begin{equation} \label{degODE}
\dot{\varphi}_{t} = \beta \varphi_{t} + \sigma |\varphi_t|^{\gamma} \dot{h}_{t}, \qquad \varphi_{0} = 0.
\end{equation}
The equation \eqref{degODE} is known to admit infinitely many solutions.
When $\dot{h}_t \ge 0$, the set of solutions contains the one-parameter family $\varphi^{(\theta)}_t = e^{\beta t} \left( \sigma (1-\gamma) \int_{\theta}^t e^{-\beta(1-\gamma)s}\dot{h}_s ds \right)^{1/(1-\gamma)} 1_{\{t \ge \theta\}}$, with $\theta \ge 0$.\footnote{When $\beta=0$, $\gamma=1/2$ and $\dot{h}\equiv1$, one retrieves the textbook example of ODE for which uniqueness fails, $\dot{\varphi}_{t} = \sigma \sqrt{|\varphi_t|}$, whose solutions from $\varphi_0=0$ are given by the one-parameter family $\varphi^{(\theta)}_t = \frac{\sigma^2}4 (t-\theta)^2 1_{\{t \ge \theta \}}$.}
Then, the definition itself of the map $h \mapsto \varphi(h)$ associating the control with the corresponding solution of the ODE is not anymore possible.

We will occasionally address this situation as ``degenerate''.
Let us note straight away that large deviations for diffusions with non-Lipschitz coefficients have been studied in Baldi and Caramellino \cite{BALDI} Donati-Martin et al. \cite{DONATIYOR}, Klebaner and Lipster \cite{Klebaner} and Robertson \cite{Robertson}.
In \cite[Theorem 1.2]{BALDI} a large deviation principle is derived for the family of equations
$dX^{\eps}_t=b(X^{\eps}_t)dt+\eps \sigma(X^{\eps}_t)dB_t, \
X^{\eps}_0=x>0$
(note the strictly positive initial condition), where the function $\sigma(\cdot)$ roughly behaves like $\sigma x^{\gamma}$ (see \cite[Assumption (A1.1)]{BALDI} for precise conditions) and $b:[0,\infty)\mapsto \R$ is a locally Lipschitz function with sub-linear growth and $b(0)>0$.
The conditions for both a drift term $b$ and an initial datum independent of $\eps$, such that $b(0)>0$ and $x>0$, are violated in the situation we consider here.
In \cite{DONATIYOR}, $b(0)=0$ and $x=0$ are allowed, but the analysis is limited to the square-root case $\gamma=1/2$, and $b$ and $x$ remain independent of $\eps$.
Note in this respect that setting $b(0)=x=0$ implies $X^{\eps} \equiv 0$ for \emph{all} $\eps$, and in this case a LDP trivially holds with the rate function $I(0)=0$, $I(\phi) =\infty$ for $\phi \not\equiv 0$ (as stated in \cite[Thm 1.3]{DONATIYOR}); in contrast with \eqref{scaledCIRCEV}, where both $b^{\eps}(0)=\alpha^{\eps}$ and $x^{\eps}$ do tend to zero as $\eps \to 0$, but coming from strictly positive values, so that the solution of the SDE is non trivial for every value of $\eps$.
In both these works, uniqueness for the limiting ODE is a key point (and appears as a part of \cite[Assumption (A2.3)]{BALDI} and is exploited in \cite[Section 5]{DONATIYOR}).
In order to study the asymptotic behavior of the ruin probability $W(\tau_0 \le T)$ with $\tau_0 = \inf\{t: X_t = 0\}$ as the initial condition $x$ tends to infinity, Klebaner and Lipster \cite{Klebaner} exploit a similar space scaling by working with the `normed' process $X^{x}_t = X_t/x$, and show that a LDP holds for the process $X^{x}$ as $x \to \infty$.
The major difference with our setting is that the initial condition $X^{x}_0=1$ in \cite{Klebaner} is fixed and does not tend to zero as $x^{\eps}$ in \eqref{scaledCIRCEV}, which is one of the difficulties to encompass in our analysis.
Robertson \cite{Robertson} derives LDP for a class of stochastic volatility models, including the Heston model with square-root volatility process. 
One of the assumptions used there is that the small noise problem for the volatility process has the same form as in Donati-Martin et al. \cite{DONATIYOR}, see \cite[Assumption 2.1]{Robertson}, and the work carried out is to transfer the LDP to the second component of the process (the log-price). Therefore, the work of \cite{Robertson} does not cover small-noise problems in the form of \eqref{scaledCIRCEV}.

We establish a LDP for a generalized version of equation \eqref{scaledCIRCEV}, allowing $\alpha$ to be a function of the process.
That is, we start from equation \eqref{diffusion} under the assumptions:
\begin{itemize}
\item[(H1)]  $\gamma \in  [1/2 , 1), \sigma>0, x>0 $.
\item[(H2)] $b(y) = \alpha(y)+ \beta y$, where $ \alpha $ is a Lipschitz continuous and bounded function, and $\alpha(y) \geq 0$ in a neighbourhood of $0$.
\end{itemize}
Under (H1)-(H2), \eqref{diffusion} is known to admit a positive solution, which is pathwise unique by Yamada and Watanabe's uniqueness theorem.

\begin{theorem} \label{t:mainIntro}
Assume conditions (H1)-(H2), and let $(X_t)_{t \ge 0}$ be the unique strong solution to \eqref{diffusion}.
Set $X^{\varepsilon}:=\varepsilon^{1/(1-\gamma)} X $; then $X^{\eps}$ satisfies \eqref{scaledCIRCEV} with the constant $\alpha$ replaced by the function $\alpha(\cdot)$.
Then, the family $\{X^{\eps}\}_{\eps}$ satisfies a large deviation principle on the path space $C([0,T],\R_+)$ with inverse speed $\eps^2$ and rate function
\[
I_{T} \left( \varphi \right)=\frac{1}{2\sigma^{2}}
\int_0^T
\left( \frac{\dot{\varphi}_{t} - \beta \varphi_{t}}{ \varphi^{\gamma}_{t} } \right)^{2}
1_{\left\{\varphi_{t} \neq 0 \right\}}
dt,
\]
and $I_{T} \left( \varphi \right) = + \infty $ whenever $\varphi(0)\neq 0$ or $\varphi$ is not absolutely continuous.
\end{theorem}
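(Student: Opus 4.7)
The plan is to prove the three standard ingredients of a Freidlin--Wentzell LDP separately -- exponential tightness of $\{X^\eps\}$ in $C([0,T],\R_+)$, goodness of $I_T$, and matching upper and lower bounds -- each adapted to the degenerate structure encoded by the indicator $1_{\{\varphi_t\neq 0\}}$ in the rate function. The conceptual novelty with respect to \cite{BALDI,DONATIYOR} is that limit trajectories may linger at $0$ at no cost, reflecting the fact that $X^\eps_0$, the drift contribution $\eps^{1/(1-\gamma)}\alpha(\cdot)$ and the diffusion coefficient all vanish at the origin as $\eps\to 0$.

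\textbf{Step 1 (exponential tightness and goodness of $I_T$).} I would first apply It\^o's formula to $(X^\eps_t)^p$ for a suitable exponent $p$, then combine with an exponential Markov inequality to control the modulus of continuity of $X^\eps$; sublinearity of the diffusion coefficient (since $\gamma<1$), together with the linearity of the drift and boundedness of $\alpha$, makes this tractable on bounded time intervals and yields exponential tightness on $C([0,T],\R_+)$. Goodness of $I_T$ follows by checking that its sublevel sets are equicontinuous (via Cauchy--Schwarz applied to $\dot\varphi-\beta\varphi=\sigma\varphi^\gamma\dot h$) and closed (by Fatou on $\dot h$).

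\textbf{Step 2 (upper bound via localization).} For fixed $\delta>0$ and $M>0$ I would work with the process stopped at the first exit from $[\delta,M]$; on this event the coefficients are Lipschitz and bounded, so the classical Azencott / Freidlin--Wentzell machinery gives an upper bound with rate function $I^\delta_T$ obtained by integrating only over $\{\varphi>\delta\}$. Letting $\delta\to 0$ and $M\to\infty$ and using Step 1 to control the exceptional events would then yield the full upper bound for $I_T$.

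\textbf{Step 3 (lower bound by excursion pasting).} Given $\varphi$ with $I_T(\varphi)<\infty$ and an open neighborhood $G$ of $\varphi$, I would decompose the open set $\{t\in(0,T):\varphi_t>0\}$ into its excursion intervals $(a_i,b_i)$. Inside each excursion the natural control $\dot h_t=(\dot\varphi_t-\beta\varphi_t)/(\sigma\varphi_t^\gamma)$ is well defined with finite $L^2$-norm, and one approximates $\varphi$ by paths bounded below by some $\delta>0$ for which a classical Freidlin--Wentzell lower bound applies. On the ``resting'' intervals where $\varphi\equiv 0$ one shows separately that $\Prob(\sup_{t\in[s,s']}X^\eps_t\le\delta)$ is larger than $\exp(-o(1/\eps^2))$ for every $\delta>0$, so these pieces are free in the exponential scale; a pathwise comparison of $X^\eps$ with a squared Bessel-type process (legitimate thanks to (H2) and $\gamma\ge 1/2$) is the natural tool. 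The pieces are then glued through the strong Markov property at the excursion endpoints.

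\textbf{Main obstacle.} The delicate step is the lower bound: the set $\{\varphi=0\}$ can be topologically complicated and one must show that $X^\eps$ can leave and re-enter a neighborhood of $0$ at negligible exponential cost. I expect the technical heart to be a quantitative estimate showing that the process can be made to rest near the origin for any prescribed amount of time with probability $\exp(-o(1/\eps^2))$, together with an approximation argument reducing to paths $\varphi$ for which $\{\varphi=0\}$ is a finite union of intervals. The restriction $\gamma\ge 1/2$ in (H1) should enter precisely here, through the use of Yamada--Watanabe-type pathwise comparison to control excursions away from $0$.
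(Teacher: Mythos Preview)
Your overall scheme (exponential tightness $+$ weak upper/lower bounds) matches the paper, but the methods you propose for the two bounds are different from what the paper does, and your lower bound plan contains a real gap.

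\textbf{Upper bound.} The paper does not localize. It introduces a functional $F^\eps(\phi,h)$ (an integration-by-parts rewriting of $\eps\sigma\int_0^T h_s\phi_s^\gamma\,dB_s-\tfrac{\sigma^2}{2}\int_0^T h_s^2\phi_s^{2\gamma}\,ds$ that is sup-norm continuous in $\phi$), observes that $\exp(\eps^{-2}F^\eps(X^\eps,h))$ is an exponential supermartingale with expectation $\le 1$, and from this reads off $\limsup_{\eps\to 0}\eps^2\log W(X^\eps\in B(\varphi,R))\le -F^0(\varphi,h)$ for every $h\in H$; finally it shows $\sup_h F^0(\varphi,h)=I_T(\varphi)$. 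Your localization to $[\delta,M]$ is uncomfortable here because $X^\eps_0=\eps^{1/(1-\gamma)}x\to 0$: for any fixed $\delta>0$ the process starts \emph{below} $\delta$ once $\eps$ is small, so the first exit from $[\delta,M]$ happens at time $0$. This can perhaps be repaired, but the martingale route is shorter and sidesteps the issue entirely.

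\textbf{Lower bound: the gap.} The excursion decomposition does not dissolve the degeneracy; it only relocates it to the start of each excursion. On an interval $(a_i,b_i)$ the target path satisfies $\varphi(a_i)=0$, and after the resting period you only know $X^\eps_{a_i}\le\delta'$. If you approximate $\varphi|_{(a_i,b_i)}$ by a path bounded below by some $\delta>0$, there is a mismatch at $t=a_i$ (the approximant sits at level $\ge\delta$, the process is merely somewhere in $[0,\delta']$), and to close it you need a quantitative ``escape from $0$'' lower bound: starting from an arbitrarily small level, $X^\eps$ must climb to level $\delta$ along the prescribed profile at exponential cost close to $I_{[a_i,a_i+\eta]}(\varphi)$. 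This is exactly the degenerate problem (the limiting ODE $\dot\varphi=\beta\varphi+\sigma\varphi^\gamma\dot h$, $\varphi_0=0$, with non-unique solutions) transplanted to each excursion. Your ``Main obstacle'' paragraph addresses only the resting estimate, not this departure estimate; the Bessel comparison you invoke controls staying near $0$, not leaving it in a prescribed way.

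\textbf{What the paper does instead.} The paper's device for the lower bound is a Lamperti-type change of variable $Y^\eps:=(X^\eps)^{1-\gamma}$. The transformed process has \emph{constant} diffusion coefficient $\sigma(1-\gamma)$; its limiting controlled ODE is the linear equation $\dot\psi=\beta(1-\gamma)\psi+\sigma(1-\gamma)\dot h$, $\psi_0=0$, which has a unique solution $\mathcal S_0(h)$, so the degeneracy disappears at this level. A Girsanov shift together with a relative-entropy inequality reduces the weak lower bound for $Y^\eps$ to proving $Y^{\eps,h}\to\mathcal S_0(h)$ in law; the singular drift picked up by It\^o's formula (of order $\eps^2/y$) is controlled by showing the hitting time of level $\sim\eps$ exceeds $T$ with probability tending to $1$. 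One first treats controls $h$ with $\mathcal S_0(h)_t>0$ on $(0,T]$ and $\dot h$ bounded below near $0$, then approximates a general $\psi$ by such trajectories (trivial here because the rate function $\mathcal I_T(\psi)=\frac{1}{2\sigma^2(1-\gamma)^2}\int_0^T(\dot\psi_t-\beta(1-\gamma)\psi_t)^2\,dt$ has no indicator). Contracting back along the continuous map $\psi\mapsto\psi^{1/(1-\gamma)}$ yields the lower bound for $X^\eps$ with rate $I_T$. This Lamperti step is the key idea your proposal is missing.
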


\noindent
Let us note that in the definition of $I_T$ above, the expression $\frac1{{\varphi_t}^{\gamma}} 1_{\varphi_t \neq 0}$ is intended to be well defined for any $\varphi_t \in \R_{+}$, and it is equal to zero when $\varphi_t=0$.
It is easy to see that the unique zero of $I_T$ is $\varphi \equiv 0$, consistently with the fact that $X^{\eps} \stackrel{W}{\rightarrow}0$ as $\eps \to 0$.
Roughly speaking, Theorem \ref{t:mainIntro} allows to write $W(X^{\eps} \in \Gamma) = \exp \left( -\frac1{\eps^2} \Bigl( \inf_{\phi \in \Gamma} I_T(\phi)+ \psi(\eps) \Bigr) \right)$ for subsets $\Gamma$ of $C(0,T)$ such that $\inf_{\phi \in \overline{\Gamma}}I_T(\phi)=\inf_{\phi \in \accentset{\circ}{\Gamma}}I_T(\phi)$, where the function $\psi(\eps)$ vanishes as $\eps \to 0$; we refer to Theorem \ref{main} in Section \ref{s:main} for the precise statements.

According to our definition of $X^{\eps}$, one has $W(X^{\eps}_t \ge 0, \forall t \ge 0, \forall \eps >0)=W(X_t \ge 0, \forall t \ge 0)=1$.
A criterium for the strict positivity of the trajectories of $X^{\eps}$, based on Feller's test for explosion, can also be given (see \cite[Prop 3.1]{DEMARCO}: when $\gamma>1/2$, $a(0)>0$ implies $W(X^{\eps}_t>0, t \ge 0)=1$, while for $\gamma=1/2$, the same conclusion is guaranteed by $2\alpha(y)/\sigma^2 \ge 1$ for $y$ in a right neighborhood of zero - yielding the familiar Feller condition $2\alpha/\sigma^2 \ge 1$ when $\alpha$ is constant).
Note that Theorem \ref{t:mainIntro} does not assume any of these condition for the non-attainability of zero; in particular for the CIR diffusion, we do not assume the Feller condition on the coefficients $\alpha$ and $\sigma$.

From Theorem \ref{t:mainIntro}, tail asymptotics for some functionals of the process $X$ can be derived (which is exactly why the $\eps$-scaling leading to $X^{\eps}$ was introduced!).
The pathwise LDP allows to consider path functionals of the process, such as the running supremum, or the time average.

\begin{theorem} \label{t:tailIntro}
Let $(X_t)_{t \ge 0}$ be the unique strong solution to \eqref{diffusion} under conditions (H1)-(H2), and let $T>0$.
Then, as $R\rightarrow \infty$
\be \label{e:tailAs}
W \left( X_{T} \geq R \right) = 
e^{-R^{2(1-\gamma)}(c_T + o(1))}
\ee
and
\be \label{e:tailAsSup}
W \biggl( \sup_{t \in [0,T]} X_t \geq R \biggr) = 
e^{-R^{2(1-\gamma)}(c_T + o(1))}
\ee
and
\be \label{e:tailAsAsian}
W\left(\frac{1}{T}\int_{0}^{T} X_{t} dt \geq R \right) = e^{-R^{2(1-\gamma)}(\nu_T+o(1))}.
\ee
The constant $c_T$, resp. $\nu_T$ are explicitly known in terms of the model parameters, and are provided below in Proposition \ref{CIRCEVtails}, resp. Proposition \ref{LaplaceAsian} for the case $\gamma=1/2$.
\end{theorem}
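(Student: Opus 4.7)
The plan is to reduce each tail asymptotic to an application of Theorem \ref{t:mainIntro} via the defining scaling $X^{\varepsilon} = \varepsilon^{1/(1-\gamma)} X$. Setting $\varepsilon := R^{-(1-\gamma)}$ so that $\varepsilon^{1/(1-\gamma)} R = 1$ and $\varepsilon \to 0$ as $R \to \infty$, one immediately has
\[
W(X_T \ge R) = W(X^{\varepsilon}_T \ge 1), \qquad W\bigl(\sup_{t \le T} X_t \ge R\bigr) = W\bigl(\sup_{t \le T} X^{\varepsilon}_t \ge 1\bigr),
\]
and similarly for the time average. The three functionals $F_1(\phi) = \phi_T$, $F_2(\phi) = \sup_{t \le T} \phi_t$, $F_3(\phi) = \frac{1}{T}\int_0^T \phi_t\, dt$ are continuous on $C([0,T], \R_+)$ with the uniform topology, so the sets $A_i := \{F_i \ge 1\}$ are closed with interiors $\{F_i > 1\}$. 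Theorem \ref{t:mainIntro} then yields
\[
-\inf_{F_i > 1} I_T \,\le\, \liminf_{\varepsilon \to 0} \varepsilon^2 \log W(X^{\varepsilon} \in A_i) \,\le\, \limsup_{\varepsilon \to 0} \varepsilon^2 \log W(X^{\varepsilon} \in A_i) \,\le\, -\inf_{F_i \ge 1} I_T.
\]

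To collapse the two bounds I exploit the homogeneity $I_T(\lambda \phi) = \lambda^{2(1-\gamma)} I_T(\phi)$ for every $\lambda > 0$, read off directly from the explicit formula. For any $\phi^{*} \in A_i$ of finite rate, the dilated path $(1+\delta)\phi^{*}$ lies in $\{F_i > 1\}$ with rate $(1+\delta)^{2(1-\gamma)} I_T(\phi^{*}) \to I_T(\phi^{*})$ as $\delta \downarrow 0$; hence $\inf_{A_i} I_T = \inf_{\{F_i > 1\}} I_T =: c^{(i)}_T$. This gives $\varepsilon^2 \log W(X^{\varepsilon} \in A_i) \to -c^{(i)}_T$, and since $\varepsilon^{-2} = R^{2(1-\gamma)}$ the three displayed asymptotics follow with $c_T := c^{(1)}_T$, $\nu_T := c^{(3)}_T$, provided $c^{(1)}_T = c^{(2)}_T$.

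The main subtlety, and the step I expect to be the most delicate, is this identification between the endpoint and supremum rate constants. The inclusion $A_1 \subset A_2$ gives $c^{(2)}_T \le c^{(1)}_T$ for free. For the converse I rely on the key feature of $I_T$ that any interval on which $\phi$ vanishes contributes zero to the integral, a consequence of the convention on $\phi^{-\gamma} 1_{\{\phi \neq 0\}}$ and of the fact that the non-linear part $\alpha(\cdot)$ of the drift does not enter the rate function. Given an admissible $\phi_0$ for the $s$-endpoint problem with $\phi_0(0) = 0$ and $\phi_0(s) = 1$, I prepend a null segment on $[0, T-s]$: the concatenated path $\phi(t) := \phi_0\bigl((t - (T - s))^{+}\bigr)$ is continuous on $[0, T]$, satisfies $\phi(0) = 0$ and $\phi(T) = 1$, and a change of variables gives $I_T(\phi) = I_s(\phi_0)$. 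Hence $m(s) := \inf\{I_s(\phi) : \phi(0) = 0,\, \phi(s) = 1\}$ is non-increasing in $s$, and the decomposition $c^{(2)}_T = \inf_{s \in (0,T]} m(s)$ (obtained by restricting any supremum-admissible path to its first hitting time of $1$, and conversely by extending an endpoint-admissible path by the drift solution $e^{\beta(\cdot - s)}$, which is cost-free) yields $c^{(2)}_T = m(T) = c^{(1)}_T$. Finally, the closed-form values of $c_T$ and $\nu_T$ for $\gamma = 1/2$ follow from solving the associated Euler--Lagrange equations, as carried out in Propositions \ref{CIRCEVtails} and \ref{LaplaceAsian}.
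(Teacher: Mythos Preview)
Your proof is correct and follows the paper's overall strategy: rescale via $\varepsilon = R^{-(1-\gamma)}$, apply the LDP of Theorem~\ref{t:mainIntro} to the continuous functionals $F_i$, and then analyse the resulting variational problems.

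There are two places where your argument is cleaner than the paper's. First, to match the infima over $\{F_i \ge 1\}$ and $\{F_i > 1\}$ you use the homogeneity $I_T(\lambda\phi)=\lambda^{2(1-\gamma)}I_T(\phi)$; the paper instead passes to the Lamperti variable $\psi=\varphi^{1-\gamma}$, solves the Euler--Lagrange equation explicitly, and reads off monotonicity in the terminal level $y$ from the explicit minimiser. Second, for the identification $c^{(1)}_T=c^{(2)}_T$ you show $s\mapsto m(s)$ is non-increasing by \emph{prepending a null segment}, which exploits precisely the feature of $I_T$ emphasised in the paper (zero contribution on $\{\phi=0\}$); the paper obtains the same monotonicity only after computing $c_t$ in closed form. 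Your route avoids computing anything until the very last step, and is therefore more robust (it would survive, e.g., a more general drift for which the Euler--Lagrange problem is not explicitly solvable). The price is that the actual values of $c_T$ and $\nu_T$ still require the explicit analysis of Propositions~\ref{CIRCEVtails} and~\ref{LaplaceAsian}, which you correctly defer to.
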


\noindent
The estimates in Theorem \ref{t:tailIntro} can be compared with the explicit formulae available for cumulative distributions and critical exponents in the CIR and CEV models: these consistency checks are done in Sections \ref{s:tailAs} and \ref{s:tailAsymptProofs}, showing that the estimates in Theorem \ref{t:tailIntro} are correct on the log-scale.
While in the one-dimensional setting the large deviation approach yield by Theorem \ref{t:mainIntro} applies to equations with a more general drift term than a purely affine function, it also opens the way to heat kernel analysis for higher-dimensional diffusions involving \eqref{diffusion} as a component, which is exactly the case left open in \cite{DFJV:II}.

Let us finally note that, due to the non uniqueness of solutions for the limiting system, the problem we consider here appears to be related to the issue of regularization by noise of ODEs.
Leaving further discussions to future work, let us just point out here a structural difference with that setting: in that context, one considers an SDE of the form $dX^{\eps}_t=b(X^{\eps}_t)dt + \eps dB_t$, with unit dispersion coefficient, seen as a perturbation of the deterministic system $\dot{x}_t=b(x_t)$ with non-Lipschitz drift $b$ (e.g. $b(x)=sign(x)|x|^{\gamma}$).
Among the possible solutions of the deterministic system, one then looks at the (few) ones supporting the limiting law of $X^{\eps}$, obtaining the so-called zero noise limits of the equation; see \cite{Trevisan} and references therein.
In our framework, the equation for $X^{\eps}$ already possesses a Lipschitz continuous drift $b(x) =\alpha_{\eps} + \beta x$.
Correspondingly, the limiting system $\dot{x}_t = \beta x$, $x_0=0$, \emph{already has} a unique solution (here: the null path $x=0$), which then gives the unique weak limit for $X^{\eps}$ (in contrast to \cite[Corollary 1.2]{Trevisan}, where the limit is a probability distribution supported on two trajectories).
As we pointed out, the difficulties in our setting come from the non-Lipschitz diffusion coefficient and appear at the level of the definition of the rate function via the control system \eqref{degODE}.

In the remainder of the document, Section \ref{s:main} is devoted to the proof of Theorem \ref{t:mainIntro}, while in Section \ref{s:tailAsymptProofs} we prove the different statements of Theorem \ref{t:tailIntro}.
We collect in Appendix A the proofs of some of the more technical material.
\medskip

\emph{Acknowledgements}.
We would like to thank an anonymous referee for the careful reading of the paper and for several valuable comments which helped to improve the presentation.
We thank Peter Friz for stimulating discussions and Antoine Jacquier for useful references on integrated CIR processes.
SDM (affiliated with TU-Berlin when this work was started) acknowledges partial financial support from Matheon.
GC acknowledges financial support from Berlin Mathematical School.
SDM and GC acknowledge financial support for travel expenses from the research program `Chaire Risques Financiers' of the Fondation du Risque.

\section{Main theoretical estimates} \label{s:main}

Let $\Omega :=C \left( [0,T], \mathbb{R} \right)$, $\Omega_{\geq 0}:=C \left( [0,T], \mathbb{R}_+ \right)$  denote the space of continuous (resp. continuous non negative) functions on $[0,T]$.
$\left( \Omega, \mathcal{F}_{t}, \mathcal{F} \right)$ denotes the canonical Wiener space, $W$ the Wiener measure on $\left( \Omega, \mathcal{F}_{t}, \mathcal{F} \right)$, and $\ex $ the expectation under $W$.
We denote $H = \{ h \in AC([0,T],R): \dot{h} \in L^{2} \}$ the space of absolutely continuous paths on $[0,T]$ with square-integrable derivative (usually referred to as Cameron-Martin space).
For a set of coefficients $\alpha(\cdot),\beta,\gamma,\sigma$ satisfying conditions (H1)-(H2), we denote $X$ the $W$ almost-surely unique strong solution of \eqref{diffusion}.
We define the rescaled process $X^{\varepsilon} : = \varepsilon^{\frac{1}{1-\gamma}} X$; it is clear that $X^{\varepsilon}$ solves equation \eqref{scaledCIRCEV} with  coefficients identified by $\alpha^{\eps}(x)=\eps^{1/(1-\gamma)} \alpha(x)$ and $x^{\eps}=\eps^{1/(1-\gamma)} x$.
Denote $b^{\eps}(x) := \alpha^{\eps}(x) + x$.

The following theorem gives the precise LDP announced in Theorem \ref{t:mainIntro} in the Introduction.
We recall that the expression $\frac1{y^{\gamma}} 1_{y \neq 0}$ is well defined for any $y \in \R_{+}$, and it is equal to zero when $y=0$.

\begin{theorem} \label{main}
 Let $X^{\varepsilon}$ be the unique strong solution to \eqref{scaledCIRCEV}.
Then,
\be \label{e:LD}
\begin{aligned}
\limsup_{\varepsilon \rightarrow 0}  \varepsilon^{2} \log W(X^{\varepsilon} \in F) &\leq -\inf_{F} I_{T} \left( \varphi \right)
\\
\liminf_{\varepsilon \rightarrow 0}  \varepsilon^{2} \log  W(X^{\varepsilon} \in G)
&\geq - \inf_{G} I_{T} \left( \varphi \right)
\end{aligned}
\ee
for every closed set $F \subseteq \Omega_{\geq 0} $ and every open set $G \subseteq \Omega_{\geq 0} $, where the rate function $I_{T} \left( \varphi \right) $ is defined by
\begin{equation} \label{e:rateFct}
I_{T} \left( \varphi \right):=\frac{1}{2\sigma^{2}}
\int_0^T
\left( \frac{\dot{\varphi}_{t} - \beta \varphi_{t}}{ \varphi^{\gamma}_{t} } \right)^{2}
1_{\left\{\varphi_{t} \neq 0 \right\}}
dt,
\end{equation} 
and $I_{T} \left( \varphi \right) = + \infty $ whenever $\varphi(0)\neq 0$ or $\varphi$ is not absolutely continuous.
\end{theorem}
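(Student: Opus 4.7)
The plan is to execute the Freidlin--Wentzell program in four steps, adapted to handle the degeneracy of the limiting ODE at $\varphi \equiv 0$.

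\textit{Good rate function and exponential tightness.} For any absolutely continuous $\varphi \in \Omega_{\geq 0}$ with $\varphi(0)=0$ and $I_T(\varphi) \leq c$, setting $\psi_t := \sigma^{-1}(\dot\varphi_t - \beta\varphi_t)\varphi_t^{-\gamma} 1_{\{\varphi_t \neq 0\}}$ gives $\|\psi\|^2_{L^2} \leq 2c$ and the reconstruction $\dot\varphi_t = \beta\varphi_t + \sigma\varphi_t^\gamma\psi_t$ a.e.\ (using that $\dot\varphi_t = 0$ a.e.\ on $\{\varphi_t = 0\}$ for absolutely continuous $\varphi \geq 0$). Young's inequality with exponent $\gamma < 1$ applied to $\sigma\varphi^\gamma|\psi|$, combined with Gronwall on $\sup_{s\leq t}\varphi(s)$, then yields uniform $L^\infty$ and H\"older estimates on $\{I_T \leq c\}$, giving relative compactness by Arzel\`a--Ascoli; lower semi-continuity follows by a standard convexity argument on the control $\psi$. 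For exponential tightness of $\{X^\varepsilon\}$, I would apply Bernstein-type exponential martingale estimates to the stochastic integral $\varepsilon\sigma\int_0^\cdot (X^\varepsilon_s)^\gamma dB_s$ to control both the supremum and the modulus of continuity of $X^\varepsilon$ on the exponential scale $\varepsilon^2$.

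\textit{Lower bound.} Given $\varphi$ with $I_T(\varphi) < \infty$, I first approximate it by a sequence $\varphi^\eta$ that is bounded away from zero on $[\eta,T]$, with $\varphi^\eta \to \varphi$ uniformly and $I_T(\varphi^\eta) \to I_T(\varphi)$. For $\varphi^\eta$ the natural control $\dot h^\eta_t := (\dot\varphi^\eta_t - \beta\varphi^\eta_t)/(\sigma(\varphi^\eta_t)^\gamma)$ lies in $L^2$, and Girsanov identifies $W(X^\varepsilon \in B(\varphi^\eta,\delta))$ with an expectation of the Cameron--Martin density $\exp(-\tfrac{1}{2\varepsilon^2}\|\dot h^\eta\|_{L^2}^2 - \tfrac{1}{\varepsilon}\int \dot h^\eta dB)$ against the event that the shifted process $Z^{\varepsilon,\eta}$ satisfying $dZ = [b^\varepsilon(Z) + \sigma Z^\gamma \dot h^\eta]\, dt + \varepsilon\sigma Z^\gamma dB$ stays in that ball. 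A Yamada--Watanabe-type pathwise comparison on $[\eta,T]$, combined with a direct uniform estimate on the initial layer $[0,\eta]$ using (H2), shows $Z^{\varepsilon,\eta} \to \varphi^\eta$ in probability, which delivers the lower bound after sending $\delta, \eta \to 0$.

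\textit{Upper bound.} By exponential tightness it suffices to prove the local estimate $\limsup_{\delta \to 0} \limsup_{\varepsilon \to 0} \varepsilon^2 \log W(\|X^\varepsilon - \varphi\|_\infty < \delta) \leq -I_T(\varphi)$ for each $\varphi$. I propose to regularize: introduce the Lipschitz coefficient $\sigma_\eta(x) := \sigma(x \vee \eta)^\gamma$ and the associated SDE for $X^{\varepsilon,\eta}$, to which classical Freidlin--Wentzell applies with rate $I_T^\eta(\varphi) = \tfrac{1}{2\sigma^2}\int (\dot\varphi_t - \beta\varphi_t)^2 (\varphi_t \vee \eta)^{-2\gamma}\, dt$. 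Since $I_T^\eta \uparrow I_T$ pointwise as $\eta \to 0$ (using again that $\dot\varphi_t = 0$ on $\{\varphi_t = 0\}$), it should suffice to combine the LDP for $X^{\varepsilon,\eta}$ with (a) an exponential contiguity bound $\limsup_\varepsilon \varepsilon^2 \log W(\|X^\varepsilon - X^{\varepsilon,\eta}\|_\infty > \delta) \to -\infty$ as $\eta \to 0$, and (b) a uniform comparison of level sets ensuring $\lim_{\eta \to 0}\inf_{F^{(\delta)}} I_T^\eta = \inf_{F^{(\delta)}} I_T$ on the compact sets singled out by exponential tightness.

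\textit{Main obstacle.} The decisive difficulty lies in the upper bound step, specifically in the exponential contiguity between $X^\varepsilon$ and its regularization $X^{\varepsilon,\eta}$. The process $X^\varepsilon$ starts from $x^\varepsilon \to 0$ and can return close to zero with only exponentially small cost, so the near-zero excursions — precisely the regime where the two SDEs decouple — must be controlled \emph{uniformly} in $\varepsilon$ before letting $\eta \to 0$. Here the positivity of $\alpha$ in a neighbourhood of zero (hypothesis (H2)) is essential, providing a non-vanishing drift that pushes $X^\varepsilon$ away from zero in the relevant regime; in spirit this plays a role analogous to the Feller condition while the statement itself crucially does \emph{not} require it.
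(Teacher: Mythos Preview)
Your lower bound strategy is close in spirit to the paper's: both shift by a control $h$ via Girsanov and then show that the controlled process converges in law to the deterministic trajectory. The paper, however, does not attempt a direct Yamada--Watanabe comparison on $X^{\varepsilon,h}$; instead it applies the Lamperti transform $Y^{\varepsilon}=(X^{\varepsilon})^{1-\gamma}$, which converts the degenerate diffusion coefficient into a constant one. The convergence $Y^{\varepsilon,h}\to \mathcal S_0(h)$ is then a fairly soft argument (Gronwall plus a hitting-time estimate for an auxiliary Ornstein--Uhlenbeck process), and the lower bound for $X^\varepsilon$ follows by contraction along $\psi\mapsto\psi^{1/(1-\gamma)}$. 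Your initial-layer-plus-comparison scheme is plausible, but the Lamperti route is what makes the ``selection of the positive solution'' of the degenerate ODE transparent and avoids having to quantify the layer $[0,\eta]$.

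The upper bound is where your plan departs substantially from the paper, and where it has a genuine gap. The paper does \emph{not} regularize $\sigma$; it proves the weak upper bound directly by the exponential supermartingale (variational) method. Concretely, for each $h\in H$ one writes $M^{\varepsilon,h}_T=\exp\bigl(\varepsilon^{-2}F^{\varepsilon}(X^\varepsilon,h)\bigr)$ with $F^{\varepsilon}(\phi,h)$ an explicit continuous functional of the path $\phi$, observes that $M^{\varepsilon,h}$ is a positive local martingale hence a supermartingale, and obtains immediately
\[
\varepsilon^{2}\log W\bigl(X^\varepsilon\in B(\varphi,R)\bigr)\ \le\ -\inf_{\phi\in B(\varphi,R)}F^{\varepsilon}(\phi,h).
\]
Sending $\varepsilon\to 0$, then $R\to 0$, and finally optimizing over $h$ yields $-\sup_{h}F^{0}(\varphi,h)=-I_T(\varphi)$. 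No comparison with a regularized process is needed, and the degeneracy at $\varphi=0$ is absorbed automatically into the $1_{\{\varphi_t\neq 0\}}$ in $I_T$ when one identifies the optimal $h$.

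By contrast, your Step (a) --- exponential contiguity $\limsup_{\varepsilon}\varepsilon^{2}\log W(\|X^\varepsilon-X^{\varepsilon,\eta}\|_\infty>\delta)\to-\infty$ as $\eta\to 0$ --- is not established, and your heuristic for it does not hold up. The drift in the rescaled equation is $\alpha^{\varepsilon}(\cdot)=\varepsilon^{1/(1-\gamma)}\alpha(\cdot)\to 0$, so the ``non-vanishing drift that pushes $X^\varepsilon$ away from zero'' in fact vanishes at exactly the rate that matters; hypothesis~(H2) ensures nonnegativity of the solution but gives no uniform-in-$\varepsilon$ repulsion from zero (and the rate function $I_T$ is indeed independent of $\alpha$). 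Worse, for every fixed $\eta>0$ and all small $\varepsilon$ the initial condition satisfies $x^\varepsilon<\eta$, so $X^\varepsilon$ and $X^{\varepsilon,\eta}$ have different diffusion coefficients \emph{from time zero}; the regularized process can moreover become negative. Obtaining the required $e^{-M/\varepsilon^{2}}$ closeness uniformly over paths that linger near zero (which cost nothing under $I_T$) is precisely the difficulty the paper sidesteps with the supermartingale argument. I would replace your upper bound step with that direct variational approach.
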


\begin{remark}
We could state the large deviation principle of Theorem \ref{main} on $\Omega=C([0,T],\R)$, setting the rate function $I_T(\varphi)$ to $+\infty$ whenever $\varphi \notin \Omega_{\ge 0}$.
Since the process $X^{\eps}$ is known to be positive $W$-$a.s.$ for every $\eps>0$, with such a definition of the rate function the LDP \eqref{e:LD} holds for every closed subset $F$ and every open subset $G$ of $\Omega$.
\end{remark}

\begin{remark}
As pointed out in the Introduction, the rate function for a family $\{X^{\eps}\}_{\eps}$ satisfying $dX^{\eps} = b(X^{\eps})dt + \eps \sigma(X^{\eps})dB_{t}, \ X^{\eps}_{0}=x$, can be written as
\be \label{e:rateFctDeg}
\overline{I}_T(\varphi) = \inf \Bigl\{\frac12 |\dot{h}|_{L^2}: h \in H, \varphi(h) = \varphi \Bigr\}
\ee
where $\varphi(h)$ is the solution to the limiting ODE controlled by $h$, $\dot{\varphi}=b(\varphi)+\sigma(\varphi)\dot{h}$ and $\varphi_0=x$, provided this solution is unique.
In our setting, consider $\varphi \in S(u)$, where now $S(u)$ denotes the set of positive solutions of the degenerate ODE \eqref{degODE} with control parameter $h=u \in H$: on the set $\{\varphi>0\}$, $u$ is uniquely determined by $\varphi$ via $\dot{u}_t=\frac{\dot{\varphi}_t-\beta \varphi_t}{\varphi_t^{\gamma}}$; on the set $\{\varphi=0\}$, the function $\varphi$ is seen to satisfy equation \eqref{degODE} for \emph{any} control parameter $h$.
This means that the set of $h$ such that $\varphi \in S(h)$ contains the infinitely many elements given by
\[
\dot{h}_t = \frac{\dot{\varphi}_t-\beta \varphi_t}{\varphi_t^{\gamma}} 1_{\{\varphi_t>0\}}
+ \frac{d\tilde{h}_t}{dt} 1_{\{\varphi_t=0\}},
\qquad \tilde{h} \in H.
\]
The control $h_0$ achieving the minimum norm is obtained setting $\tilde{h}\equiv 0$.
This gives $\frac12 |\dot{h}_0|_{L^2}=
\inf \bigl\{\frac12 |\dot{h}|_{L^2}: h \in H,  \varphi \in S(h) \bigr\}=I_T(\varphi)$ for the rate function $I_T$ defined in \eqref{e:rateFct}.
\end{remark}

\begin{remark}
Assume that $b:[0,\infty) \to \R$ is a locally Lipschitz function with sublinear growth and $b(0)>0$, and that $\overline{X}^{\eps}$ satisfies $d\overline{X}^{\eps}_t=b(\overline{X}^{\eps}_t)dt+ \eps \sigma \bigl(\overline{X}^{\eps}_t\bigr)^{\gamma}dB_t$ and $\overline{X}^{\eps}_0=x>0$.
Then it is known from \cite[Thm 2.1]{BALDI} or \cite[Thm 4.2]{ChiariniFischer} that $\overline{X}^{\eps}$ satisfies a LDP with rate function
\[
J_T(\varphi):=\frac{1}{2\sigma^{2}}
\int_0^T
\left( \frac{\dot{\varphi}_{t} - b(\varphi_{t})}{ \varphi^{\gamma}_{t} } \right)^{2}dt,
\]
and $J_T(\varphi)=\infty$ if $\varphi$ is not absolutely continuous, where one classically agrees that $1/\varphi_t$ is equal to $+\infty$ if $\varphi_t=0$.
We stress that the latter rate function is radically different from $I_T$ defined in \eqref{e:rateFct}: whenever $\varphi=0$ on some non trivial interval $K \subset [0,1]$, then $J_T(\varphi)=\infty$, while in such a case the integrand in \eqref{e:rateFct} gives zero contribution to $I_T$ on $K$.
In other words, while trajectories with a zero-set of positive measure require infinite energy to be followed by the process $\overline{X}^{\eps}$ in the small-noise limit, they are favoured by the rate function of the process $X^{\eps}$.
\end{remark}

\subsection{Tail asymptotics} \label{s:tailAs}
 
The space-scaling $X^{\eps}=\eps^{1/(1-\gamma)}X$ together with the large deviation principle \eqref{e:LD} allow to work out tail asymptotics for functionals of the process $X$.
The following proposition provides the precise constants appearing in Theorem \ref{t:tailIntro} in the Introduction.

\begin{proposition} \label{CIRCEVtails}
The asymptotic formulas \eqref{e:tailAs} and \eqref{e:tailAsSup} in Theorem \ref{t:tailIntro} hold with the constant $c_T$ given by
\begin{equation} \label{e:leadingOrder}
c_T = \begin{cases}
\frac{\beta e^{-2\beta(1-\gamma)T}}{\sigma^{2}(1-\gamma) (1-e^{-2\beta(1-\gamma)T})} & \text{if $\beta \neq 0$}
\\
\frac{1}{2\sigma^{2}(1-\gamma)^{2}T} & \text{if $\beta =0$.}
\end{cases}
\end{equation}
\end{proposition}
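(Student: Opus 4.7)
The strategy rests on converting tail events for $X$ into small-noise events for $X^{\eps}$. Setting $\eps = R^{-(1-\gamma)}$ gives
$W(X_{T}\geq R) = W(X^{\eps}_{T}\geq 1)$ and $W(\sup_{[0,T]}X_{t}\geq R) = W(\sup_{[0,T]}X^{\eps}_{t}\geq 1)$.
The sets $\{\varphi:\varphi_{T}\geq 1\}$ and $\{\varphi:\sup_{t}\varphi_{t}\geq 1\}$ are closed in $\Omega_{\geq 0}$ with interiors $\{\varphi_{T}>1\}$ and $\{\sup_{t}\varphi_{t}>1\}$. Applying Theorem \ref{main} reduces the problem to computing $\inf\bigl\{I_{T}(\varphi):\varphi_{T}\geq 1\bigr\}$ and $\inf\bigl\{I_{T}(\varphi):\sup_{t}\varphi_{t}\geq 1\bigr\}$, and to checking that these infima coincide with those over the corresponding open sets so that the upper and lower LDP bounds match.

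For the variational problem I would perform the change of variable $u_{t}:=\varphi_{t}^{1-\gamma}$. On $\{\varphi_{t}>0\}=\{u_{t}>0\}$ one has $\varphi_{t}^{-\gamma}\dot{\varphi}_{t}=\dot{u}_{t}/(1-\gamma)$ and $\varphi_{t}^{1-\gamma}=u_{t}$, turning the integrand of $I_{T}$ into $\frac{1}{(1-\gamma)^{2}}\bigl(\dot{u}_{t}-\beta(1-\gamma)u_{t}\bigr)^{2}$. If we allow the trajectory to wait at zero on an initial interval $[0,\theta]$ (which contributes no cost, thanks to the indicator in \eqref{e:rateFct}), we are led to the quadratic optimal control problem
\[
\min_{u_{\theta}=0,\,u_{T}=1}\;\frac{1}{2\sigma^{2}(1-\gamma)^{2}}\int_{\theta}^{T}\bigl(\dot{u}_{t}-\beta(1-\gamma)u_{t}\bigr)^{2}dt.
\]
The Euler-Lagrange equation $\ddot{u}=\beta^{2}(1-\gamma)^{2}u$ yields the extremal $u^{*}(t)=\sinh(\beta(1-\gamma)(t-\theta))/\sinh(\beta(1-\gamma)(T-\theta))$ (with the $\beta=0$ limit $u^{*}(t)=(t-\theta)/(T-\theta)$), which is automatically non-negative on $[\theta,T]$, so the positivity constraint is inactive. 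Evaluating the integral gives a value that equals the right-hand side of \eqref{e:leadingOrder} with $T$ replaced by $T-\theta$, and is strictly decreasing in $T-\theta$ for every sign of $\beta$; hence the infimum is attained at $\theta=0$, with value exactly $c_{T}$.

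To match the infima over open and closed sets I would use the exact scaling $I_{T}(\lambda\varphi)=\lambda^{2(1-\gamma)}I_{T}(\varphi)$, immediate from \eqref{e:rateFct}. Combined with the previous step, $\inf\{I_{T}:\varphi_{T}=\lambda\}=\lambda^{2(1-\gamma)}c_{T}$ is increasing in $\lambda$, so $\inf_{\varphi_{T}\geq 1}I_{T}=\inf_{\varphi_{T}>1}I_{T}=c_{T}$, which gives \eqref{e:tailAs}. For \eqref{e:tailAsSup}, the optimal $\varphi^{*}$ is non-decreasing, so $\sup_{t}\varphi^{*}_{t}=\varphi^{*}_{T}=1$ yields the upper bound $\inf_{\sup\geq 1}I_{T}\leq c_{T}$; conversely, if $\sup_{t}\varphi_{t}\geq 1$, set $s=\inf\{t:\varphi_{t}=1\}\leq T$ and bound $I_{T}(\varphi)\geq I_{s}(\varphi|_{[0,s]})\geq c_{s}$. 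A direct inspection of \eqref{e:leadingOrder} shows that $s\mapsto c_{s}$ is decreasing, hence $c_{s}\geq c_{T}$, and the scaling trick again collapses the closed and open versions of the sup-event.

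I expect the main source of care to be the bookkeeping around the indicator in \eqref{e:rateFct}: one has to verify that the ``free waiting'' allowed by the degenerate rate function does not produce a lower minimum than the classical Euler-Lagrange curve, and that trajectories touching zero after becoming positive do not disrupt the optimality analysis. Everything else reduces to a linear-quadratic control computation that becomes standard once the substitution $u=\varphi^{1-\gamma}$ is in place.
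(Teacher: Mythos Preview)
Your proposal is correct and follows essentially the same route as the paper: convert to small noise via $\eps=R^{-(1-\gamma)}$, apply the LDP, pass to $u=\varphi^{1-\gamma}$, solve the linear--quadratic Euler--Lagrange problem, verify positivity of the extremal, and for the running maximum combine the inclusion $\{X_T\ge R\}\subset\{\sup X\ge R\}$ with a first-hitting-time argument together with the monotonicity of $s\mapsto c_s$. The only cosmetic differences are that you introduce the waiting-time parameter $\theta$ explicitly and optimize it out (the paper simply solves the two-point boundary value problem on $[0,T]$ and checks the minimizer is positive), and that you match the infima over open and closed sets via the homogeneity $I_T(\lambda\varphi)=\lambda^{2(1-\gamma)}I_T(\varphi)$ whereas the paper argues via monotonicity of the extremal in the terminal value $y$; both choices lead to the same conclusion.
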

One can see that $c_T$ does not depend on the function $\alpha(\cdot)$ in the drift of $X$, nor on the initial condition $x$.

\begin{remark}
Some comments are in order.

\begin{itemize}
\item[(i)] \emph{\textbf{Comparison with explicit formulae for the CEV process}.
The asymptotic behavior \eqref{e:tailAs} can be compared with the explicit formulae available for the density of the CEV process.
When $\alpha\equiv0$ in \eqref{diffusion}, $X$ can be obtained as a deterministic time-change of a power of a squared Bessel process (see \cite[Section 6.4.3]{JEANBLANC}).
As a consequence, 
for every $T>0$ the random variable $X_T$ is known to admit a density with respect to the Lebesgue measure on the positive real line, given by
\be \label{e:densityCEV}
\begin{aligned}
f_{X_T}(y) &= 
\frac{(1-\gamma)}{d(T)}
e^{\beta(-2(1-\gamma)+1/2)T}
\exp\left(-\frac{1}{2d(T)} \bigl( x^{2(1-\gamma)}+y^{2(1-\gamma)}e^{-2\beta(1-\gamma)T}\right)
\\
&\quad\quad \times
x^{1/2} y^{-2\gamma+1/2}
I_{1/2(1-\gamma)}
\left(\frac{1}{d(T)} x^{1-\gamma} y^{1-\gamma}e^{-\beta(1-\gamma)T}\right),
\qquad y >0,
\end{aligned}
\ee
where $I_{\nu}$ is the modified Bessel function of the first kind of index $\nu>0$, and $d(T)=\frac{(1-\gamma)\sigma^2}{2\beta}(1-e^{-2\beta(1-\gamma)T})$ (note \emph{en passant} that one has $d(T)>0$ for every choice of the sign of $\beta$).\footnote{When $\gamma \in [1/2,1)$, the law of $X_T$ also possesses an atom at zero, $\Prob(X_T=0)=m_T>0$, and an explicit formula for the mass $m_T$ is available (see again \cite[Chap.6]{JEANBLANC}).
From our point of view, this only means that the density $f_{X_T}$ does not integrate to $1$ on $(0,\infty)$, without affecting our analysis of the tail asymptotics at $\infty$.}
The formula \eqref{e:densityCEV} is also valid for $\beta=0$, when one replaces all the $\beta$-dependent constants with their limits as $\beta\to 0$, such as $d(T)|_{\beta=0}=(1-\gamma)^2\sigma^2T$.
Using the asymptotic behavior (see~\cite[Section 9.7.1]{ABRAMOWITZ}) of the modified Bessel function $I_{\nu}(z) \sim \frac{e^z}{\sqrt{2 \pi z}}$ as $z\to \infty$ for fixed $\nu>0$, one immediately obtains
\[
\log f_{X_T}(y) =: g(y) \sim -\frac{e^{-2\beta(1-\gamma)T}}{2d(T)}y^{2(1-\gamma)} = -c_T y^{2(1-\gamma)},
\qquad x \to \infty,
\]
with the constant $c_T$ defined in \eqref{e:leadingOrder}.
Using some standard tools of regular variation \cite{Bingham}, one can then easily prove that $\log W(X_T>y)=\log\int_y^{\infty} e^{g(z)} dz \sim g(y) \sim -c_T y^{2(1-\gamma)}$ as $y \to \infty$, thus showing that estimate \eqref{e:tailAs} is exact on the log-scale.
}

\item[(ii)] \emph{The asymptotic estimate $f_{X_T}(y) \le A_T \: e^{-a_T y^{2(1-\gamma)}}$, $y > 1$, for the density of $X_T$ was proven in \cite{DEMARCO} for the solutions of a class of SDEs containing \eqref{diffusion} under conditions (H1)-(H2) (namely, in \cite{DEMARCO} the coefficients $\beta$ and $\gamma$ are also allowed to depend smoothly on $X$), relying on techniques of Malliavin calculus and transformations for 1-dimensional SDEs.
The constant $a_T$ provided there is not optimal.
While the estimates in \cite{DEMARCO} remain valid for more general equations, the large deviation principle in Theorem \ref{main} allows to obtain a sharp estimate on the log-scale.}
\end{itemize}
\end{remark}
\medskip

The asymptotic behavior $W\bigl(\frac1T \int_0^T X_t dt\bigr)=\exp\bigl(-R^{2(1-\gamma)}(\nu_T+o(1))\bigr)$ for the time average of the process can also be proven using Theorem \ref{main}: see Proposition \ref{LaplaceAsian} in Section \ref{s:tailAsymptProofs}, where an expression of the constant $\nu_T$ is provided in the case $\gamma=1/2$.

\section{Proof of the main estimates}

We prove the large deviation principle in Theorem \ref{main} by first showing the exponential tightness of the family $\{X^{\eps}\}_{\eps}$, namely for every $m<0$ there exists a compact set $K_m \subset C([0,T])$ such that $\limsup_{\eps \to 0} \eps^2\log W(X^{\eps} \in K_m^c) \le m$.
We then prove the weak upper bound
\[
\limsup_{R \to 0} \: \limsup_{\eps \to 0} \eps^2 \log W(X^{\eps}\in B(\varphi,R)) \le -I_T(\varphi)
\qquad \forall \varphi \in \Omega_{\ge 0},
\]
and the weak lower bound
\[
\liminf_{R \to 0} \liminf_{\eps \to 0} \eps^2 \log W(X^{\eps}\in B(\varphi,R)) \ge -I_T(\varphi)
\qquad \forall \varphi \in \Omega_{\ge 0}
\]
where $B(\varphi,R)$ denotes the closed ball in $C([0,T])$ of radius $R$, $B(\varphi,R):=\{\tilde{\varphi}: |\tilde{\varphi}-\varphi|_{\infty} \le R\}$.
It is a general fact that exponential tightness combined with the weak upper bound yields the large deviation upper bound in \eqref{e:LD} for any closed set after a covering argument (see \cite[Chapters 1 and 2]{DEUSCHEL}).
On the other hand, the weak lower bound trivially provides the full lower bound in \eqref{e:LD}, observing that open sets are neighborhoods of their points.

\subsection{Exponential tightness} \label{s:expTight}

We prove the exponential tightness considering balls in the \Holder norm $\| \omega \|_{\eta} := \sup_{s,t \leq T, s\neq t} \frac{|\omega_{t}- \omega_{s} |}{|t-s|^{\eta}}$ and a natural bound on the initial condition $\omega_0$.
More precisely, we define 
\be  
K_R:= \{ \| \omega \|_{\eta} \leq R \} \cap \{ \omega_0 \in (0, x] \}.
\ee
It is classical that these sets are compact in $C([0,T])$.

\begin{proposition} \label{exptight}
The family of measures 
$W (X^{\eps} \in \cdot)$ is exponentially tight in scale $\varepsilon^{2}$, i.e.
\begin{equation*}
\lim_{R \rightarrow + \infty }\limsup_{\varepsilon \rightarrow 0 }\varepsilon^{2} \log W \left(X^{\eps} \in K^c_R \right) = -\infty
\end{equation*}
for every $0<\eta<\frac{1}{2}$.
\end{proposition}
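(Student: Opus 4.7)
Since $X^{\eps}_0 = \eps^{1/(1-\gamma)} x$ lies in $(0,x]$ almost surely for $\eps \leq 1$, the condition $\omega_0 \in (0,x]$ in $K_R$ holds automatically for small $\eps$. The task reduces to showing
\[
\lim_{R \to \infty} \limsup_{\eps \to 0} \eps^2 \log W\bigl(\|X^{\eps}\|_{\eta} > R\bigr) = -\infty.
\]
The plan is to control separately a preliminary exponential estimate on $\sup_{t \le T} X^{\eps}_t$ and a Hölder estimate for the martingale part of the SDE, then optimize over the localization level.

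For the sup estimate, I would introduce the stopping time $\tau_M := \inf\{t \ge 0 : X^{\eps}_t \geq M\}$ and observe that, on $\{\tau_M \leq T\}$, evaluating the SDE at $t = \tau_M$ yields
\[
M \leq x^{\eps} + \bigl(\eps^{1/(1-\gamma)}\|\alpha\|_{\infty} + |\beta| M\bigr)T + \sup_{t \le T} N^{\eps,M}_t,
\]
where $N^{\eps,M}_t := \eps\sigma \int_0^{t \wedge \tau_M} (X^{\eps}_s)^{\gamma}\, dB_s$ is a continuous martingale with $[N^{\eps,M}]_T \le \eps^2 \sigma^2 M^{2\gamma} T$. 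Splitting $[0,T]$ into finitely many subintervals of length $\le 1/(4|\beta|)$ to absorb the $|\beta|MT$ term, and applying the standard exponential inequality for continuous martingales with bounded quadratic variation (a consequence of Doob's inequality for $\exp(\lambda N - \lambda^2 [N]/2)$), gives
\[
W\Bigl(\sup_{t \le T} X^{\eps}_t \geq M\Bigr) \leq C \exp\bigl(-c_1 M^{2(1-\gamma)}/\eps^2\bigr)
\]
for $M$ large and $\eps$ small, where $c_1$ depends only on $\sigma, \beta, T$.

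For the Hölder part, on the event $\{\sup_{t \le T} X^{\eps}_t \leq M\}$ the drift $\int_s^t b^{\eps}(X^{\eps}_u)\,du$ is Lipschitz in $t$ with constant $\le \|\alpha\|_{\infty} + |\beta|M$ and therefore contributes at most $C(1+M) T^{1-\eta}$ to the $\eta$-seminorm. By the Dambis--Dubins--Schwarz theorem, the stopped martingale $N^{\eps,M}$ equals $\tilde{B}_{[N^{\eps,M}]_t}$ for a Brownian motion $\tilde B$; combining $|[N^{\eps,M}]_t - [N^{\eps,M}]_s| \le \eps^2\sigma^2 M^{2\gamma}|t-s|$ with Brownian scaling yields
\[
\|N^{\eps,M}\|_{\eta} \;\leq\; \eps\,\sigma\, M^{\gamma}\, T^{1/2-\eta}\, \|W\|_{\eta,[0,1]}
\]
in distribution, where $W$ is a standard Brownian motion. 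The Garsia--Rodemich--Rumsey inequality with $\Psi(x)=e^{\lambda x^2}-1$ ensures $P(\|W\|_{\eta,[0,1]}>u) \le C e^{-c u^2}$, and hence
\[
W\bigl(\|X^{\eps}\|_{\eta} > R,\ \sup X^{\eps} \le M \bigr) \;\leq\; \exp\bigl(-c_2 R^2/(\eps^2 M^{2\gamma})\bigr)
\]
for $R$ large compared to the deterministic drift contribution $C(1+M)T^{1-\eta}$. Splitting $\{\|X^{\eps}\|_{\eta}>R\}$ according to whether $\sup X^{\eps}\le M$ and taking $M = R$ balances the two exponents to $R^{2(1-\gamma)}/\eps^2$, which gives the claim.

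The main obstacle is the first step: the bracket of the martingale driving $X^{\eps}$ involves $X^{\eps}$ itself, so the sup estimate is intrinsically circular. The localization at $\tau_M$ breaks this circularity, and the sublinearity $\gamma < 1$ of the diffusion coefficient is what produces the correct exponent $M^{2(1-\gamma)}$ and, together with the matching exponent in the Hölder step, allows the balancing at $M = R$.
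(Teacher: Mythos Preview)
Your overall strategy---localize at a level $M$, split $X^{\eps}$ into its bounded-variation and martingale parts, and combine a supremum estimate with a H\"older estimate on the martingale---is the same scaffolding the paper uses.  The implementations differ: for $W(\sup_t X^{\eps}_t \ge M)$ the paper compares with an auxiliary process $\tilde X^{\eps}$ having explicit exponential moments (Lemma~\ref{expmoments}) and then uses Doob's inequality, whereas you go directly via the exponential martingale inequality for the stopped local martingale; for the H\"older piece the paper applies Garsia--Rodemich--Rumsey with $\Psi(y)=e^{y/\eps^2}-1$ to $M^{\eps,n}$, whereas you time-change via Dambis--Dubins--Schwarz and invoke the Gaussian tail of $\|W\|_{\eta}$.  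Your route is more self-contained (no auxiliary Lemma~\ref{expmoments}), at the price of slightly looser constants.

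There is, however, a concrete slip in the closing step.  With $M=R$ the drift contributes $(|\alpha|_\infty+|\beta|M)T^{1-\eta}\sim |\beta|\,T^{1-\eta} R$ to $\|X^{\eps}\|_\eta$, and whenever $|\beta|\,T^{1-\eta}\ge 1$ this already exceeds $R$: your condition ``$R$ large compared to $C(1+M)T^{1-\eta}$'' is then never met, and the martingale H\"older bound is vacuous.  There is no need to \emph{balance} the two exponents; both merely have to diverge.  Taking $M=R^{\alpha}$ for any $\alpha\in(0,1)$ (the paper uses $n=\lfloor\sqrt{R}\rfloor$) makes the drift contribution $o(R)$ and gives exponents $c_1 R^{2\alpha(1-\gamma)}/\eps^2$ and $c_2 R^{2-2\alpha\gamma}/\eps^2$, both tending to infinity.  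A second, smaller point: the ``splitting into subintervals of length $\le 1/(4|\beta|)$'' does not absorb the $|\beta|MT$ term as stated, because on each subinterval the starting value can sit arbitrarily close to $M$, forcing no lower bound on the martingale increment.  The clean fix is Gronwall on $[0,\tau_M]$: from $X^{\eps}_t\le x^{\eps}+\eps^{1/(1-\gamma)}\|\alpha\|_\infty T+|\beta|\int_0^t X^{\eps}_s\,ds+\sup_{s\le T}|N^{\eps,M}_s|$ one gets $M\le\bigl(o(1)+\sup_s|N^{\eps,M}_s|\bigr)e^{|\beta|T}$ at $t=\tau_M$, hence $\sup_s|N^{\eps,M}_s|\ge \tfrac12 M e^{-|\beta|T}$ for $M$ large and $\eps$ small, and your exponential inequality then yields the desired $\exp(-c\,M^{2(1-\gamma)}/\eps^2)$ bound.
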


\noindent
We follow \cite{DONATIYOR} in the proof of Proposition \ref{exptight}.
First, let us observe that for $\varepsilon \leq 1$, $W(X^{\eps}_0 \in (0,x]) = 1$ so that we just need to estimate the \Holder norm of $X^{\eps}$.
To this end, we use a version of Garsia-Rodemich-Rumsey's Lemma, and the existence of exponential moments for a process bounding $X^{\eps}$ from above.

\begin{lemma} \label{expmoments}
Consider $(\tilde{X_t},t \ge 0)$ the strong solution to
\[
d\tilde{X}_t = (|\alpha|_{\infty}+|\beta|\tilde{X}_t)dt + \sigma(\tilde{X}_t)^{\gamma} dB_t, \qquad \tilde{X}_0 = x
\]
and define $\tilde{X}^{\eps}:=\eps^{1/(1-\gamma)} \tilde{X}$.
Then, there exist positive constants $c$ and $C$ such that:
\be \label{e:expmoments}
\ex \left(\exp{ \left(c \varepsilon^{-2} (\tilde{X}^{\varepsilon}_{t})^{2(1-\gamma)} \right)} \right) \leq C, \quad \forall t \in [0,T], \quad \forall \eps>0.
\ee
\end{lemma}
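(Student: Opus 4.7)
The plan is to reduce the statement via scaling and then establish the exponential bound through a Lyapunov-type argument. First, I observe the identity $\eps^{-2}(\tilde X^{\eps}_t)^{2(1-\gamma)} = \tilde X_t^{2(1-\gamma)}$, which rewrites \eqref{e:expmoments} as the $\eps$-free inequality $\sup_{t \in [0,T]} \ex[\exp(c \tilde X_t^{2(1-\gamma)})] \le C$. The task thus reduces to exhibiting constants $c, C > 0$ for which this holds.

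For this, I would consider the ansatz $u(t, x) = \exp(a(t) + b(t) x^{2(1-\gamma)})$ with $a, b : [0,T] \to \R$ to be determined, and apply It\^o's formula to $u(t, \tilde X_t)$. Denoting by $L = (|\alpha|_\infty + |\beta|x)\partial_x + \frac12\sigma^2 x^{2\gamma}\partial_{x}^2$ the generator of $\tilde X$, a direct computation yields
\[
(\partial_t u + L u)(t,x) = u(t,x) \Bigl[\, a'(t) + B_1(t)\, x^{2(1-\gamma)} + B_2(t)\, x^{1-2\gamma} + B_3(t) \,\Bigr],
\]
with $B_1(t) = b'(t) + 2(1-\gamma)|\beta| b(t) + 2(1-\gamma)^2 \sigma^2 b(t)^2$, $B_2(t) = 2(1-\gamma)|\alpha|_\infty b(t)$, and $B_3(t) = (1-\gamma)(1-2\gamma)\sigma^2 b(t)$. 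The idea is then to choose $b$ as the solution of the Riccati equation $B_1 \equiv 0$ with terminal value $b(T) = c$: via the substitution $v = 1/b$ this becomes a linear ODE whose solution is positive and bounded on $[0, T]$ provided $c$ is small enough (depending on $T, |\beta|, \sigma$). The constant term $B_3(t)$ is then bounded and can be absorbed by choosing $a(t)$ appropriately.

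The main obstacle is the $x^{1-2\gamma}$ contribution. When $\gamma = 1/2$, one has $1-2\gamma \equiv 0$ so $B_3 \equiv 0$ and $x^{1-2\gamma} \equiv 1$, making the ansatz an exact solution of $\partial_t u + L u = 0$: $u(t, \tilde X_t)$ is then a local (and after standard localization, a true) martingale, so the bound follows from $\ex[u(t, \tilde X_t)] = u(0, x) < \infty$ combined with the monotonicity $b(t) \ge c$ (recall that $b$ is decreasing on $[0,T]$). For $\gamma > 1/2$, the singular term $x^{1-2\gamma}$ cannot be cancelled by a purely time-dependent correction, and one must further control the expectation of $\int_0^t u(s, \tilde X_s) \tilde X_s^{1-2\gamma} ds$ --- for instance via estimates on the occupation measure of $\tilde X$ near zero, or by a Lamperti-type transformation of $\tilde X$ into a diffusion with constant volatility --- before closing the argument with a Gronwall inequality. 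This last step, in the absence of a Feller-type condition ensuring integrability of negative powers of $\tilde X_t$, is the delicate technical point of the proof.
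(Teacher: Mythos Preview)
Your scaling reduction is exactly the paper's first step, and for $\gamma=1/2$ your affine-martingale ansatz is correct and self-contained (the paper instead invokes the explicit CIR transition density, so this is a different but equally valid route in that case).

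For $\gamma\in(1/2,1)$ the argument has a genuine gap. You correctly isolate the singular contribution $B_2(t)\,x^{1-2\gamma}$ and admit that the Gronwall closure hinges on controlling $\ex\bigl[u(s,\tilde X_s)\,\tilde X_s^{1-2\gamma}\bigr]$, which is precisely unavailable without a priori bounds on negative moments of $\tilde X_s$; this is not a detail but the whole difficulty, and you leave it open. The paper's route is the Lamperti transformation you allude to but do not carry out, with one key additional idea: after a comparison argument reducing to a process $\overline X$ with zero linear drift, one applies not $\varphi(x)=\frac{1}{\sigma(1-\gamma)}x^{1-\gamma}$ itself but a smooth modification $\tilde\varphi$ that vanishes identically near $x=0$. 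Since $|\varphi-\tilde\varphi|$ is uniformly bounded this is harmless for exponential moments, yet the It\^o drift and quadratic variation of $\tilde\varphi(\overline X_t)$ are now \emph{bounded}, because the negative powers of $x$ are simply cut off on $\{x<\delta\}$. Dubins--Schwarz then represents the martingale part as a Brownian motion run on a bounded clock, and Gaussian tails (Fernique's theorem) yield the required quadratic exponential moments for $\tilde\varphi(\overline X_t)$, hence for $\tilde X_t^{2(1-\gamma)}$. The regularization of $\varphi$ near zero is the missing ingredient in your proposal.
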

\begin{proof}
According to the definition of $\tilde{X}^{\eps}$, one has $\varepsilon^{-2} (\tilde{X}^{\varepsilon}_{t})^{2(1-\gamma)}=\tilde{X}^{2(1-\gamma)}_{t}$, so that \eqref{e:expmoments} holds if and only if $\ex \left[ \exp{\left(c \tilde{X}_{t}^{2(1-\gamma)} \right)} \right] \leq C$ for all $t \in [0,T]$.
When $\gamma=1/2$, \eqref{e:expmoments} follows from the asymptotic behavior of the density of the CIR process for large arguments (see e.g. \cite[section 6.3.2 p.358]{JEANBLANC}); for general $\gamma$ and $\beta=0$, from the asymptotic behavior of the density of the classical CEV process as stated for example in \cite[Lemma 6.4.3.1 p.368]{JEANBLANC}.
For general $\gamma$ and $\beta$, we rely on a slight generalization of the proof of \cite[Prop 3.3]{DEMARCO}; we leave the details to Appendix \ref{a:appendix1}.
\end{proof}
\medskip

The next proposition is a direct consequence of Garsia-Rodemich-Rumsey's Lemma; see  Appendix \ref{a:appendix1} for a statement of this lemma and a proof of Proposition \ref{GRRI}.

\begin{proposition} \label{GRRI}
Let $\omega \in\Omega $.
Fix $\varepsilon,R>0$, $\eta \in (0, \frac{1}{2})$. 
Assume that: 
\be \label{e:Garsia}
\int_{0}^{T} \int_{0}^{T} \exp{\left(
\frac{ | \omega_t-\omega_s | }{\varepsilon^{2}\sqrt{|t-s|}}\right)}dsdt   \leq  K_{\varepsilon,\eta}(R)
\ee
with $K_{\varepsilon,\eta}(R):=\frac{1}{4} \exp \left(  T^{\eta-1/2}\left(\frac{R}{8\varepsilon^{2}} -4T^{1/2-\eta} - K_{\eta}\right) \right) -\frac{1}{4}T^{2}$ and $K_{\eta}:= \sup_{u \in [0,T]} 2{u}^{1/2-\eta}\log(u^{-1})<\infty$.
Then,
\begin{equation} \label{e:GarsiaHoldNorm}
\| \omega \|_{\eta} \leq R.
\end{equation}
\end{proposition}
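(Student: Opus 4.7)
The result is a direct application of the classical Garsia--Rodemich--Rumsey (GRR) inequality, which states that for any continuous, strictly increasing $\Psi, p : [0,\infty) \to [0,\infty)$ with $\Psi(0) = p(0) = 0$ and any continuous $\omega : [0,T] \to \R$,
\[
\int_0^T \int_0^T \Psi\!\left(\frac{|\omega_t - \omega_s|}{p(|t-s|)}\right) ds\, dt \leq B
\ \Longrightarrow\
|\omega_t - \omega_s| \leq 8 \int_0^{|t-s|} \Psi^{-1}\!\left(\frac{4B}{u^2}\right) dp(u).
\]
The plan is to invoke this inequality with the Young function $\Psi(x) := e^{x/\varepsilon^{2}} - 1$ and the modulus $p(u) := u^{1/2}$, chosen to match the exponential and the $\sqrt{|t-s|}$ appearing in \eqref{e:Garsia}. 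Then $\Psi^{-1}(y) = \varepsilon^{2}\log(1+y)$ and $dp(u) = \tfrac12 u^{-1/2}du$, and the hypothesis \eqref{e:Garsia} yields $\int_0^T \int_0^T \Psi(|\omega_t-\omega_s|/\sqrt{|t-s|})\,ds\,dt \le K_{\varepsilon,\eta}(R) - T^{2}$, so that GRR gives
\[
|\omega_t - \omega_s| \leq 4\varepsilon^{2} \int_0^{|t-s|} \log\!\left(1 + \frac{4(K_{\varepsilon,\eta}(R) - T^{2})}{u^{2}}\right) u^{-1/2}\,du.
\]

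Next I would estimate the integral on the right. Since the interesting regime is $R$ large, $K_{\varepsilon,\eta}(R) - T^{2}$ is so large that $4(K_{\varepsilon,\eta}(R)-T^{2})/u^{2} \ge 1$ throughout $u \in (0, T]$, and one may use the bound $\log(1+y) \leq \log(2y)$. Combined with the elementary antiderivatives $\int_0^\tau u^{-1/2}du = 2\tau^{1/2}$ and $\int_0^\tau u^{-1/2}\log u\,du = 2\tau^{1/2}(\log \tau - 2)$, this leads to a pointwise bound
\[
|\omega_t - \omega_s| \leq 8\varepsilon^{2}\tau^{1/2}\bigl[\log\bigl(c\,(K_{\varepsilon,\eta}(R)-T^{2})/\tau^{2}\bigr) + 4\bigr],
\qquad \tau := |t-s|,
\]
for an explicit numerical constant $c$.

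Finally, to reach \eqref{e:GarsiaHoldNorm} I would divide by $\tau^{\eta}$ and take the supremum over $\tau \in (0, T]$. Using $\tau^{1/2-\eta} \le T^{1/2-\eta}$ (valid since $1/2 - \eta > 0$) for the constant and $\log(c(K_{\varepsilon,\eta}(R)-T^{2}))$ terms, and using the very definition of $K_{\eta} = \sup_{u \in (0,T]} 2u^{1/2-\eta}\log(u^{-1})$ for the $2\tau^{1/2-\eta}\log(1/\tau)$ piece, one arrives at
\[
\|\omega\|_{\eta} \leq 8\varepsilon^{2}\bigl[T^{1/2-\eta}\log\bigl(c\,(K_{\varepsilon,\eta}(R)-T^{2})\bigr) + 4T^{1/2-\eta} + K_{\eta}\bigr],
\]
and the exponential defining $K_{\varepsilon,\eta}(R)$ is calibrated exactly so that this upper bound equals $R$, thereby giving \eqref{e:GarsiaHoldNorm}.

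Conceptually the whole argument is routine; the only strategic choice is the exponential Young function in GRR, which produces a logarithmic gain in $\Psi^{-1}$ that remains integrable against $u^{-1/2}$. The only place requiring care is the bookkeeping of multiplicative constants, so that the final inversion indeed reproduces the prescribed closed-form expression for $K_{\varepsilon,\eta}(R)$; this is where a suboptimal choice would degrade the constant that later appears in the exponential tightness estimate.
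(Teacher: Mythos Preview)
Your approach is identical to the paper's: apply GRR with $\Psi(y)=e^{y/\varepsilon^{2}}-1$ and $p(u)=\sqrt{u}$, bound the resulting logarithmic integral, divide by $\tau^{\eta}$, and invert. The one place your execution diverges is the logarithm bound: you use $\log(1+y)\le\log(2y)$ (valid only for $y\ge 1$, which forces you to assume $R$ large even though the proposition is stated for all $R>0$), whereas the paper uses the elementary inequality $1+4K/u^{2}\le(4K+T^{2})/u^{2}$ for $u\le T$, which holds unconditionally and produces the term $\log(4K+T^{2})$. This matters because $K_{\varepsilon,\eta}(R)$ in the statement is calibrated precisely so that $4K_{\varepsilon,\eta}(R)+T^{2}=\exp\bigl(T^{\eta-1/2}(R/(8\varepsilon^{2})-4T^{1/2-\eta}-K_{\eta})\bigr)$; your bound puts $c\,(K_{\varepsilon,\eta}(R)-T^{2})$ inside the logarithm, which does \emph{not} invert to the stated expression. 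So while your argument proves a version of the proposition with a slightly different $K_{\varepsilon,\eta}(R)$ (harmless for the downstream exponential tightness), to recover the exact constant you must replace $\log(1+y)\le\log(2y)$ by the paper's bound. (The paper, incidentally, silently drops the $-T^{2}$ coming from the $-1$ in $\Psi$ and feeds $K$ itself into the GRR conclusion; this only weakens the upper bound and is why $4K+T^{2}$ appears rather than $4(K-T^{2})+\text{const}$.)
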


In the proof of Proposition \ref{exptight}, we exploit a localization procedure: for any $\varepsilon>0$ and $n \in \mathbb{N}$, define the process $X^{\varepsilon,n}$ as the strong solution of the SDE with truncated coefficients:
\begin{equation}\label{eq:crsp}
dX_{t}^{\varepsilon,n}= b^{\varepsilon}(X^{\varepsilon,n}_{t} \wedge n) dt + \sigma \varepsilon \left({X^{\varepsilon,n}_{t}} \wedge n \right)^{\gamma} dB_t , \quad
X^{\varepsilon,n}_{0}=x^{\varepsilon}.
\end{equation}
The paths of $X^{\varepsilon,n}$ can be decomposed in their martingale part and locally bounded variation part
\begin{equation*}
d X^{\varepsilon,n}_{t} = dA^{\varepsilon,n}_t+dM^{\varepsilon,n}_t
\end{equation*}
with $dM^{\varepsilon,n}_t = \varepsilon \sigma (X^{\varepsilon,n}_t \wedge n)^{\gamma} dB_t$ and $dA^{\varepsilon,n}_t = b^{\varepsilon}(X^{\varepsilon,n}_{t} \wedge n) dt$.
We shall also define for every $n, \varepsilon$ the stopping time $T^{\varepsilon, n}  := \inf \left\{ t \geq 0 :  X^{\varepsilon}_t \geq n\right\}$.
By the pathwise uniqueness for equation \eqref{diffusion} (equivalently, \eqref{eq:crsp}), we have that up to time $T^{\varepsilon, n}$ the processes $\left( X^{\varepsilon}_t \right)_{t \in [0,T]}$ and $\left(X^{\varepsilon,n}_t \right)_{t \in [0,T]}$ coincide almost surely.
More precisely, $\forall n \in \mathbb{N}$ and $\varepsilon>0$
\be \label{e:pu}
W \left( X^{\varepsilon}_{t \wedge T^{\varepsilon,n}} = X^{\varepsilon,n}_{t \wedge T^{\varepsilon,n}}, \forall t \in [0,T] \right) = 1.
\ee

\begin{proof}[Proof of Proposition \ref{exptight}]
Let us fix $\eta \in (0,\frac{1}{2})$.
By $\eqref{e:pu}$,
\begin{align} \label{localization}
 \nonumber  W\left( \|X^{\eps} \|_{\eta} \geq R\right) & \leq W\left( \| X^{\varepsilon,n}\|_{\eta} \geq R, T^{\varepsilon,n} \geq T \right)  +  W \left( T^{\varepsilon,n} \leq T\right) \\
  & \leq  W \left( \| X^{\varepsilon,n} \|_{\eta} \geq R  \right) + W\left( T^{\varepsilon,n} \leq T\right).
 \end{align}
Let us estimate the first term in \eqref{localization}.
Using Proposition \ref{GRRI} and Markov's inequality we have for every $\varepsilon, n$:
\begin{align*}
W (\| M^{\varepsilon,n} \|_{\eta} \geq R) & \leq W \left( \int_{0}^{T} \int_{0}^{T} \exp\left(\varepsilon^{-2}\frac{ | M^{\varepsilon,n}_t-M^{\varepsilon,n}_s | }{\sqrt{|t-s|}} \right) dsdt
\geq
K_{\varepsilon,\eta}(R) \right) \\ 
&\leq \frac{1}{K_{\varepsilon,\eta}(R)} \int_{0}^{T} \int_{0}^{T} \ex \left( \exp\left(\varepsilon^{-2}\frac{ | M^{\varepsilon,n}_{t}-M^{\varepsilon,n}_{s} | }{\sqrt{|t-s|}} \right)\right)dsdt.
\end{align*}
Applying the exponential martingale inequality $\esp \left( \exp( \lambda M_t) \right) \leq \sqrt{\esp \left(  \exp \left( 2 \lambda^{2} \langle M \rangle_t \right) \right)}$ \cite[Chap IV]{REVUZYOR} with $\lambda=\frac 1{\eps^2\sqrt{|t-s|}}$, for $t>s$ one has 
\[
\begin{aligned}
\ex \left( \exp\left( \frac{|M^{\varepsilon,n}_t-M^{\varepsilon,n}_s |}{\eps^2 \sqrt{t-s}} \right)\right)
& \leq 
2 \sqrt{\ex \left( \exp\left(\frac{2\sigma^2}{\varepsilon^2(t-s)}\int_{s}^{t}\left( X^{\varepsilon,n}_{r} \wedge n \right)^{2\gamma} dr\right) \right) }  \\
& \leq 2 \exp \left(  \sigma^{2}\varepsilon^{-2}n^{2\gamma} \right).
\end{aligned}
\]
Therefore, using the definition of the constant $K_{\eps,\eta}(R)$ in Proposition \ref{GRRI}
\begin{align}\label{Fernique}
\limsup_{\varepsilon \rightarrow 0} \varepsilon^{2} \log W   \left( \| M^{\varepsilon,n} \|_{\eta} \geq R  \right) \leq -T^{\eta - 1/2 } \frac{R}{8}+\sigma^2 n^{2\gamma}.
\end{align}
For the bounded variation part $A^{\varepsilon,n}$, we observe that
\begin{align*}\label{BV1}
W \left( \| A^{\varepsilon,n} \|_{\eta} \geq R \right) \leq W \left(T^{1-\eta} \sup_{t \in [0,T]} b^{\varepsilon} \left( X^{\varepsilon,n}_{t} \wedge n \right) \geq R \right).
\end{align*}
Under hypothesis (H), $b^{\eps}(x) \leq |\alpha|_{\infty}+ \beta x$ for every $x$.
Therefore, for every $\varepsilon, n$
\begin{equation}\label{BV2}
W \left( \| A^{\varepsilon,n} \|_{\eta} \geq R \right) \leq W \left( T^{1-\eta}  (|\alpha|_{\infty} + \beta n) \geq R \right)
= 0,
\end{equation}
where the last identity holds as soon as $R>T^{1-\eta}  (|\alpha|_{\infty} + \beta n)$.

We now deal with the second term in \eqref{localization}. 
It follows from the comparison theorem for one-dimensional SDEs \cite[Proposition 5.2.18]{KS}, that $X^{\varepsilon}_{t} \leq \tilde{X}^{\varepsilon}_{t}, t \le T$, almost surely, where $\tilde{X}^{\varepsilon}$ is defined in Lemma \ref{expmoments}.
For every fixed $\gamma$ and $a>0$, it is a simple exercise to show that the function $y \mapsto \exp (a \varepsilon^{-2} (1+y)^{2(1-\gamma)})$, $y>0$, is increasing and convex if $\eps$ is small enough\footnote{The second derivative reads $e^{a \varepsilon^{-2} (1+y)^{2(1-\gamma)}} \times 2a\eps^{-2}(1-\gamma)(1+y)^{-2\gamma} \times [1-2\gamma+\frac{2a}{\eps^2}(1-\gamma)(1+y)^{2(1-\gamma)}]$.}.
For such values of $\eps$, since $\tilde{X}^{\varepsilon}_{t}$ is a submartingale, so is $\exp \left( a \varepsilon^{-2}(1+\tilde{X}^{\varepsilon}_{t})^{2(1-\gamma)} \right)$.
Then, we can apply Markov's inequality and Doob's $L^2$-inequality, obtaining:
\begin{align} \nonumber
W\left( T^{n,\varepsilon} \leq T\right) &= W\left( \sup_{t\in [0,T]} X^{\varepsilon}_{t}\geq n\right)
\\
& \nonumber
\leq W \left( \sup_{t\in [0,T]} \exp\left(a\varepsilon^{-2} {\left(1+ \tilde{X}^{\varepsilon}_{t}\right)}^{2(1-\gamma)}\right)\geq \exp \left( a\varepsilon^{-2}(1+n)^{2(1-\gamma)}\right)\right) 
\\
& \label{doob} \leq
\exp \left(-a \varepsilon^{-2}(1+n)^{2(1-\gamma)}\right)
\times
4 \: \ex \left(\exp \left( a \varepsilon^{-2} (1+\tilde{X}^{\varepsilon}_{T})^{2(1-\gamma)} \right)\right).
\end{align}
Using the elementary inequality $\exp(a(1+y)^{2(1-\gamma)}) \le \exp(a 2^{2(1-\gamma)})+\exp(a(2y)^{2(1-\gamma)})$,
and choosing $a$ such that $a \times 2^{2(1-\gamma)}=c$ where $c$ is the constant in Lemma \ref{expmoments}, it follows from this lemma and estimate \eqref{doob} that
\be \label{doob2} 
W\left( T^{n,\varepsilon} \leq T\right)
\le
\exp\left(-a \varepsilon^{-2}n^{2(1-\gamma)}\right)
\times 4 \left[\exp(c \eps^{-2}) + C \right],
\ee
where $C$ is the second constant in Lemma \ref{expmoments}.
Now choosing $ n:=\lfloor\sqrt{R} \rfloor $, the condition under which \eqref{BV2} holds true is satisfied for $R$ large enough. 
Passing to the limit as $\varepsilon \rightarrow 0$ in \eqref{localization} and using \eqref{Fernique}, \eqref{BV2} and \eqref{doob2}, we obtain
\begin{displaymath}
\limsup_{\varepsilon \rightarrow 0} \varepsilon^{2} \log \left(W \left( \| X^{\varepsilon} \|_{\eta} \geq R \right) \right) \leq \max \left\{ -\frac{R}{8} + \sigma^2{R}^{\gamma}, -a{{R}^{(1-\gamma)}+c} \right\}.
\end{displaymath}
Letting $R\rightarrow \infty$, the conclusion follows.
\qedBl
\end{proof}

\subsection{Weak upper bound }

This section is devoted to the proof of the following proposition.

\begin{proposition}\label{p:weakupperbound}
$\forall \varphi \in \Omega_{\ge 0} \cap H$:
\begin{equation}\label{e:weakupperbound}
\limsup_{R \rightarrow 0 } \limsup_{\varepsilon \rightarrow 0 } \varepsilon^{2} \log W \left( X^{\varepsilon} \in B (\varphi, R )  \right) \leq - I_{T} (\varphi).
\end{equation}
\end{proposition}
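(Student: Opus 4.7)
Assume first that $\varphi \in H$ with $\varphi(0)=0$ and $I_T(\varphi) < \infty$; the complementary cases are either trivial ($\varphi(0)\neq 0$ forces $W(X^\varepsilon \in B(\varphi,R))=0$ for small $R$, since $X^\varepsilon_0 \to 0$) or reducible to the finite-rate case through approximation combined with the exponential tightness of Proposition~\ref{exptight}. The strategy is a Girsanov change of measure along the natural control
\[
\dot{h}_t := \frac{1}{\sigma}\,\frac{\dot{\varphi}_t - \beta\varphi_t}{\varphi_t^{\gamma}}\,1_{\{\varphi_t>0\}}, \qquad \tfrac12 |\dot{h}|_{L^2}^2 = I_T(\varphi),
\]
defining the tilted measure $Q$ by $\frac{dQ}{dW}=\exp\bigl(\tfrac{1}{\varepsilon}\int_0^T\dot{h}\,dB - \tfrac{1}{2\varepsilon^2}|\dot{h}|_{L^2}^2\bigr)$. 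Under $Q$, $\tilde{B}_t := B_t - \tfrac{1}{\varepsilon}\int_0^t \dot{h}_s\,ds$ is a Brownian motion and $X^\varepsilon$ satisfies
\[
dX^{\varepsilon}_t = \bigl[b^{\varepsilon}(X^\varepsilon_t)+\sigma(X^\varepsilon_t)^{\gamma}\dot{h}_t\bigr]dt + \varepsilon\sigma(X^\varepsilon_t)^{\gamma}\,d\tilde{B}_t.
\]

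The Girsanov identity then rewrites
\[
W(X^{\varepsilon} \in B(\varphi,R)) = e^{-I_T(\varphi)/\varepsilon^2}\, E^Q\!\left[\exp\!\Bigl(-\tfrac{1}{\varepsilon}\int_0^T \dot{h}_s\,d\tilde{B}_s\Bigr)\,1_{\{X^{\varepsilon}\in B(\varphi,R)\}}\right],
\]
so it suffices to show that $\varepsilon^2 \log$ of the right-hand expectation vanishes as $\varepsilon \to 0$, then $R \to 0$. Solving the tilted SDE for $d\tilde{B}$ yields the pathwise identity
\[
\int_0^T \dot{h}_t\,d\tilde{B}_t = \frac{1}{\varepsilon\sigma}\int_0^T \frac{\dot{h}_t}{(X^{\varepsilon}_t)^{\gamma}}\bigl(dX^{\varepsilon}_t - b^{\varepsilon}(X^{\varepsilon}_t)\,dt\bigr) - \frac{|\dot{h}|_{L^2}^2}{\varepsilon}.
\]
On $\{X^{\varepsilon}\in B(\varphi,R)\}$, replacing $X^\varepsilon$ by $\varphi$ makes the first term close to $\tfrac{1}{\varepsilon\sigma}\int_0^T \tfrac{\dot{h}_t}{\varphi_t^{\gamma}}(\dot{\varphi}_t-\beta\varphi_t)\,dt = \tfrac{|\dot{h}|_{L^2}^2}{\varepsilon}$ (by the very definition of $\dot{h}$, and since $\alpha^\varepsilon\to 0$), so the two leading $1/\varepsilon$ terms cancel, leaving a remainder of order $R/\varepsilon$. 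Hence $\exp\bigl(-\tfrac{1}{\varepsilon}\int \dot{h}\,d\tilde{B}\bigr) = e^{O(R/\varepsilon^2)}$ on the event, and $\varepsilon^2 \log E^Q[\cdots] = O(R) \to 0$ as $R \to 0$.

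The principal obstacle is making this pathwise replacement rigorous near the zero set of $\varphi$. There, $\dot{h}$ vanishes by construction but $X^\varepsilon$ need not, so the factor $(X^{\varepsilon}_t)^{-\gamma}$ may be large and one must control the stochastic integral $\int \dot{h}_t(X^\varepsilon_t)^{-\gamma}\,dM^{\varepsilon}_t$ (with $M^\varepsilon$ the martingale part of $X^\varepsilon$) uniformly in $\varepsilon$. A natural resolution proceeds by a double approximation: first truncate $\dot{h}$ at level $N$ so the integrand is bounded, then approximate $\varphi$ by strictly positive paths $\varphi^{\delta}$ (on which the tilted ODE is Lipschitz and uniquely solvable), prove the estimate for the regularized objects, and finally pass to the limits $N\to\infty$ and $\delta\to 0$. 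The rate function $I_T$ is well-adapted to this procedure precisely because it ignores the set $\{\varphi=0\}$, so the approximation preserves the value of the rate without any loss.
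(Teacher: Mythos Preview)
Your Girsanov approach is the natural route for the \emph{lower} bound, but for the upper bound it has a gap at the ``pathwise replacement'' step. The identity you write is a tautology: plugging the tilted SDE into the right-hand side reproduces $\int \dot{h}\,d\tilde{B}$ exactly. What you actually need is that the stochastic integral $\int_0^T \frac{\dot{h}_t}{(X^\varepsilon_t)^\gamma}\,dX^\varepsilon_t$ is, on the event $\{|X^\varepsilon-\varphi|_\infty\le R\}$, within $O(R)$ of its formal analogue $\int_0^T \frac{\dot{h}_t}{\varphi_t^\gamma}\,\dot{\varphi}_t\,dt$. But an It\^o integral is not a continuous functional of its integrator in the sup norm, so this replacement is not available as stated --- and this is not primarily a singularity issue near $\{\varphi=0\}$; the difficulty persists even for strictly positive $\varphi$, so truncating $\dot{h}$ does not address it. One can rescue the argument by first applying It\^o's formula to $\frac{\dot{h}_t}{1-\gamma}(X^\varepsilon_t)^{1-\gamma}$, which rewrites the stochastic integral as a genuinely sup-norm continuous functional of the path plus an $O(\varepsilon^2)$ quadratic-variation correction; but this requires $\dot{h}\in C^1$ and $X^\varepsilon$ bounded away from zero on the event, hence two further approximation layers that you have not accounted for and whose constants must be tracked.

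The paper's proof is different and sidesteps all of this. Rather than committing to the optimal control, it tests with an \emph{arbitrary} $h\in H$ through the exponential supermartingale $M^{\varepsilon,h}_T=\exp\bigl(\frac{1}{\varepsilon^2}F^\varepsilon(X^\varepsilon,h)\bigr)$ associated to $\frac{\sigma}{\varepsilon}\int_0^{\cdot} h_s(X^\varepsilon_s)^\gamma\,dB_s$. The point of using $h_s(X^\varepsilon_s)^\gamma$ (rather than $\dot{h}_s$) as integrand is that integration by parts of $h_t X^\varepsilon_t$ eliminates the stochastic integral entirely: the exponent $F^\varepsilon(\phi,h)$ is an explicit formula involving only Riemann integrals of $\phi$, $\phi^{2\gamma}$ and $\dot{h}$, hence continuous in $\phi$ on all of $\Omega_{\ge 0}$ for every $h\in H$, with no positivity restriction and no extra smoothness on $h$. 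From $\esp[M^{\varepsilon,h}_T]\le 1$ one obtains $W(X^\varepsilon\in B(\varphi,R))\le\sup_{B(\varphi,R)}e^{-F^\varepsilon(\cdot,h)/\varepsilon^2}$; after $\varepsilon\to 0$ then $R\to 0$ this gives $-F^0(\varphi,h)$, and a final optimization over $h\in H$ yields $\sup_h F^0(\varphi,h)=I_T(\varphi)$.
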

\bigskip

For every $h \in H,\varepsilon>0$ and $\phi \in \Omega_{\ge0}$, define
\begin{equation}\label{expmartfunctional}
F^{\varepsilon}\left( \phi , h \right) :=
h_{T} \phi_{T} - h_{0} \phi_{0}
- h_T \int_0^T b^{\eps}(\phi_s)ds
- \int_{0}^{T} \Bigl( \phi_{s} - \int_{0}^{s}b^{\varepsilon} \left( \phi_{r} \right)dr \Bigr) \dot{h}_{s}ds -\frac{\sigma^{2}}{2}\int_{0}^{T} h^{2}_{s}\phi_{s}^{2\gamma} ds.
\end{equation}
By setting $\varepsilon = 0$  in \eqref{expmartfunctional}, we can define the functional $F^{0}\left( \phi , h \right)$.
Note that $F^{\varepsilon}\left(\cdot,h\right) $ is continuous $\forall h \in H$ on the whole space $\Omega_{\ge0}$ with respect to the sup-norm topology, and converges to $F^{0} \left(\cdot,h \right)$ uniformly on $\Omega_{\ge0}$ as $\eps \to 0$.

\begin{remark} \label{r:parts}
Applying the integration by parts formula to the product $h_t X^{\eps}_t$, one has
\[
\begin{aligned}
\eps \sigma \int_0^T h_t (X^{\eps}_t)^\gamma dB_t
&= 
h_T X^{\eps}_T - h_0 x^{\eps}_0
-\int_0^{T} [\dot{h}_t X^{\eps}_t + h_t b^{\eps}(X^{\eps}_t)] dt
\\
&= 
h_T X^{\eps}_T - h_0 x^{\eps}_0
-h_T \int_0^T b^{\eps}(X^{\eps}_t) dt 
-\int_0^{T} \dot{h}_t \Bigl(X^{\eps}_t - \int_0^t b^{\eps}(X^{\eps}_s) ds\Bigr) dt,
\end{aligned}
\]
hence

\[
F^{\eps}(X^{\eps}_{\cdot},h) =
\varepsilon \sigma \int_{0}^{T} h_{s} (X^{\varepsilon}_{s})^{\gamma} dB_{s}   -\frac{\sigma^{2}}{2}\int_{0}^{T} h^{2}_{s} (X^{\varepsilon}_{s})^{2\gamma} ds.
\]
\end{remark}

\noindent
According to Remark \ref{r:parts}, the random variable
\begin{equation}\label{explocmart}
M^{\varepsilon,h}_{T}(\omega):=
 \exp \left(\frac{1}{\varepsilon^2}F^{\varepsilon}\left( X^{\varepsilon}(\omega) , h \right) \right)
\end{equation}
is the value at time $T$ of the local exponential martingale associated to $\frac{\sigma}{\varepsilon}\int_{0}^{.} h_{s} ( X^{\varepsilon}_{s})^{\gamma} dB_{s}$.
It should be stressed that, for any $h \in H$ and $\varepsilon>0$, the functionals $F^{\eps}(\phi,h)$ and $M^{\varepsilon,h}_{T} \left( \phi \right)$ are well defined for every $\phi \in \Omega_{\ge0}$, and not only almost surely. 

\begin{proof}[Proof of Proposition \ref{p:weakupperbound}]
Since any positive local martingale is a supermartingale, we have
\begin{equation} 
\ex\bigl[M^{\varepsilon,h}_{T} \big] \leq 1.
\end{equation}
Fix now a trajectory $\varphi \in \Omega_{\geq 0}$.
Using the remark above:
\begin{align*}
W(X^{\varepsilon} \in B(\varphi, R) )
&=\esp\left[ e^{-\frac{1}{\varepsilon^2}F^{\varepsilon}\left( X^{\varepsilon}, h \right)}
M^{\varepsilon,h}_{T}
1_{\{ X^{\varepsilon} \in B(\varphi, R)\}} \right]
\\
& \leq
\sup_{ \phi \in B(\varphi,R) } \exp \left(-\frac{1}{\varepsilon^2}F^{\varepsilon} \left(\phi ,h \right)\right)\ex\left( M^{\varepsilon,h}_{T}\right) 
\\ & \leq 
\sup_{\phi \in B(\varphi,R)}\exp\left( -\frac{1}{\varepsilon^{2}}  F^{\varepsilon} \left( \phi, h \right) \right). 
\end{align*}
Since $\sup_{\phi \in B(\varphi,R)}|F^{\varepsilon}\left(\phi,h\right)-F^{0}\left( \phi , h\right)| \rightarrow 0$,
we have that
\[
\limsup_{\varepsilon \rightarrow 0} \varepsilon^2 \log W (X^{\varepsilon} \in B(\varphi, R)) \leq \sup_{\phi \in B(\varphi,R)} (-F^{0}\left(\phi, h\right)).
\]
Therefore, by the continuity of $\phi \mapsto F^{0}\left( \phi,h\right)$,
\begin{align*}
\limsup_{R \rightarrow 0} \limsup_{\varepsilon \rightarrow 0} \varepsilon^2 \log( W (X^{\varepsilon} \in B(\varphi, R) \leq -F^{0}(\varphi, h), \qquad \forall h \in H.
\end{align*}
In the next proposition we prove that:
\begin{equation*}
\sup_{h \in H} F^{0}(\varphi, h) = I_{T}(\varphi)
\end{equation*}
which concludes the proof of \eqref{e:weakupperbound}.
\qedBl
\end{proof}

\begin{proposition}
$\forall \ \varphi \in  \Omega_{\geq 0 } $ we have that: 
\begin{align}\label{VariationalRate1}
\sup_{h \in H} F^{0}(\varphi, h) = I_{T}(\varphi)
\end{align}
\end{proposition}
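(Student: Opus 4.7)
The plan is to proceed by Fenchel duality, treating $F^0(\varphi, \cdot)$ as a concave quadratic-in-$h$ functional whose supremum can be computed via a Cauchy-Schwarz inequality in a weighted $L^2$ space.

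I would begin with the case $\varphi$ absolutely continuous and $\varphi_0 = 0$. Integrating by parts the term $\int_0^T \bigl(\varphi_s - \int_0^s \beta \varphi_r\, dr\bigr)\dot h_s\, ds$ in the definition of $F^0$, all boundary contributions cancel against $h_T \varphi_T - h_0 \varphi_0 - h_T \int_0^T \beta \varphi_s\, ds$, leaving the clean identity
\[
F^0(\varphi, h) \;=\; \int_0^T h_s (\dot\varphi_s - \beta \varphi_s)\, ds \;-\; \frac{\sigma^2}{2} \int_0^T h_s^2 \varphi_s^{2\gamma}\, ds.
\]
Using the standard fact that for AC nonnegative $\varphi$, $\dot\varphi = 0$ almost everywhere on $\{\varphi = 0\}$ (each zero is a local minimum of $\varphi \geq 0$), the first integral is supported on $\{\varphi > 0\}$. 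Factoring the integrand as $(h_s \varphi_s^\gamma)\cdot (\dot\varphi_s - \beta \varphi_s)/\varphi_s^\gamma$ and applying Cauchy-Schwarz gives $\int h (\dot\varphi - \beta\varphi)\, ds \leq u\sqrt{2\sigma^2 I_T(\varphi)}$ with $u := \bigl(\int h^2 \varphi^{2\gamma}\, ds\bigr)^{1/2}$; maximizing the resulting bound $u\sqrt{2\sigma^2 I_T(\varphi)} - \sigma^2 u^2/2$ in $u \geq 0$ yields $F^0(\varphi, h) \leq I_T(\varphi)$ for every $h \in H$.

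For the matching lower bound, the pointwise Euler--Lagrange optimizer is $h^*_s := \sigma^{-2}(\dot\varphi_s - \beta \varphi_s)\varphi_s^{-2\gamma} \mathbf 1_{\{\varphi_s > 0\}}$, which achieves equality in Cauchy-Schwarz and thus formally gives $F^0(\varphi, h^*) = I_T(\varphi)$. Since $h^*$ need not lie in $H$ (it belongs only to the weighted space $L^2([0,T]; \varphi^{2\gamma}\, dt)$, with squared norm $2 I_T(\varphi)/\sigma^2$, and may blow up near $\{\varphi = 0\}$), I would construct approximants $h_n \in H$ by truncating $h^*$ to $\{\varphi \geq \delta_n\}$ with $\delta_n \downarrow 0$ and convolving with a smooth mollifier; dominated convergence in the weighted $L^2$ norm then gives $\int h_n^2 \varphi^{2\gamma}\, ds \to 2 I_T(\varphi)/\sigma^2$ and $\int h_n(\dot\varphi - \beta\varphi)\, ds \to 2 I_T(\varphi)$, whence $F^0(\varphi, h_n) \to I_T(\varphi)$. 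When the integral defining $I_T(\varphi)$ diverges, the same scheme with $\delta_n$ decreasing sufficiently slowly produces $F^0(\varphi, h_n) \to +\infty$. The remaining degenerate cases (non-AC $\varphi$, or $\varphi_0 \neq 0$), where $I_T(\varphi) = +\infty$ by definition, are handled by an ad hoc construction exhibiting $F^0(\varphi, h_n) \to +\infty$ along sequences $h_n \in H$ concentrated near $t = 0$.

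The main obstacle is precisely the approximation step in the lower bound: $h^*$ is in general only a weighted-$L^2$ function, possibly singular at the boundary of $\{\varphi > 0\}$, so the joint truncation-and-mollification must be tuned to simultaneously place $h_n \in H$ and preserve convergence of both the linear pairing $\int h_n(\dot\varphi - \beta\varphi)\, ds$ and the weighted quadratic integral $\int h_n^2 \varphi^{2\gamma}\, ds$. Since ordinary mollification does not interact cleanly with multiplication by the weight $\varphi^{2\gamma}$, the order of operations and the respective decay rates of $\delta_n$ and of the mollifier bandwidth must be coupled carefully.
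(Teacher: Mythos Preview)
Your approach is correct and essentially the same as the paper's: both integrate by parts to write $F^0(\varphi,h)$ as a linear-in-$h$ term minus a weighted quadratic and identify the same pointwise optimizer $h^*_s=(\dot\varphi_s-\beta\varphi_s)/(\sigma^2\varphi_s^{2\gamma})$ on $\{\varphi>0\}$; the paper reaches it via concavity and a Fr\'echet derivative computation, you via Cauchy--Schwarz and a scalar maximization, which are equivalent for a quadratic functional. Your treatment is in fact more careful than the paper's on the approximation step---the paper simply evaluates $F^0$ at $h^*$ without checking that $h^*\in H$ or invoking an approximating sequence, whereas you correctly flag that $h^*$ lies only in the weighted $L^2$ space and outline the truncation--mollification needed, and you also address the degenerate cases $\varphi\notin AC$ and $\varphi_0\neq 0$ that the paper's proof leaves implicit.
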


\begin{proof}
Assume $\varphi \in \Omega_{\ge 0} \cap H$ is such that $I_T(\varphi)<\infty$.
Then, the function $u$ defined by $u_0=0, \dot{u}_s=\frac{\dot{\varphi}_s - b\varphi_s}{\sigma \varphi_s^{\gamma}} 1_{\varphi_s\neq 0}$ is by definition an element of $H$, and $\varphi$ satisfies by construction the ODE \eqref{degODE} with control $u$.
Repeating the computations in Remark \ref{r:parts}, one can see that
\[
F^{0}(\varphi,h) = \sigma \int_0^T h_s \varphi_s^{\gamma} \dot u_s ds- \frac{\sigma^2}2 \int_0^T h_s^2 \varphi_s^{2\gamma} ds.
\]
Note that $F^{0}(\varphi,h)$ is concave in $h$, hence if it has a critical point, this must be a maximum.
The Fr\'echet differential $D^h F^{0}(\varphi,h)$ at $h$, applied to the generical element $k \in H$, reads
\[
D^h F^{0}(\varphi,h)[k] = \sigma \int_0^T k_s \left[ \varphi_s^\gamma \dot u_s - \sigma h_s \varphi_s^{2\gamma} \right] ds.
\]
Therefore, $D^h F^{0}(\varphi,h)|_{h=h^*}=0$ at any $h^*$ such that $h^*_s=\frac{\dot u_s}{\sigma \varphi_s^\gamma}$ on $\{s:\varphi_s \neq 0\}$ (while $h^*_s$ can take any arbitrary value on $\{s:\varphi_s=0\}$).
For such $h^*$, one has
\[
F^{0}(\varphi,h^*) = \int_0^T (\dot u_s)^2 1_{\varphi_s\neq 0} ds - \frac12 \int_0^T (\dot u_s)^2 1_{\varphi_s\neq 0} ds
= \frac12 \int_0^T (\dot u_s)^2 1_{\varphi_s\neq 0} ds
=I_T(\varphi).
\]
On the other hand, if $\varphi$ is absolutely continuous and such that $I_T \left( \varphi \right) = + \infty$, one can approximate the function
$\frac{\dot{\varphi}_{s} - \beta \varphi_{s}}{\varphi^{2\gamma}_{s}} $ with a sequence $ h^{n}  \in H $ such that $F^{0} \left( \varphi, h^{n} \right) \rightarrow + \infty$. 
\qedBl
\end{proof}

\subsection{Weak lower bound} \label{s:wlb}

This section is devoted to the proof of 

\begin{proposition} \label{p:wlbX}
For all $\varphi \in \Omega_{\ge0}$, we have
\begin{equation}\label{weaklowerbound}
\liminf_{R \rightarrow 0 } \liminf_{\varepsilon \rightarrow 0 } \varepsilon^{2} \log W \left( X^{\varepsilon} \in B (\varphi, R )  \right) \geq - I_{T}(\varphi).
\end{equation}
\end{proposition}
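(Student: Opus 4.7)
If $I_T(\varphi)=+\infty$ the inequality is trivial, so I restrict to $\varphi\in\Omega_{\ge 0}\cap H$ with $\varphi_0=0$ and $I_T(\varphi)<\infty$, and introduce the \emph{natural control} $u\in H$ given by
\[
\dot u_t=\frac{\dot\varphi_t-\beta\varphi_t}{\sigma\varphi_t^\gamma}\,1_{\{\varphi_t>0\}},\qquad \tfrac12|\dot u|_{L^2}^2=I_T(\varphi),
\]
so that $\varphi$ itself solves the limiting controlled ODE $\dot\psi=\beta\psi+\sigma\psi^\gamma\dot u$ with $\psi_0=0$. The plan is to apply Girsanov's theorem with density $Z^\eps:=\exp\bigl(\eps^{-1}\int_0^T\dot u_s\,dB_s-\tfrac{1}{2\eps^2}|\dot u|_{L^2}^2\bigr)$, so that under $Q^\eps:=Z^\eps\cdot W$ the process $X^\eps$ solves the same SDE with an extra drift $\sigma(X^\eps)^\gamma\dot u$. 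A standard Jensen-type bookkeeping (dual to the upper-bound argument of Section 2.2) then yields, for any $R>0$,
\[
\eps^{2}\log W\bigl(X^\eps\in B(\varphi,R)\bigr)\ge\eps^{2}\log Q^\eps\bigl(X^\eps\in B(\varphi,R)\bigr)-\eps\,|\dot u|_{L^2}/\sqrt{Q^\eps(B(\varphi,R))}-I_T(\varphi),
\]
so the whole problem reduces to proving $Q^\eps(X^\eps\in B(\varphi,R))\to 1$ as $\eps\to 0$.

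The obstruction is precisely the degeneracy highlighted in the introduction: the ODE above admits many solutions from $\psi_0=0$, so there is no a priori reason for the Girsanov-shifted diffusion, starting from $x^\eps\to 0$, to concentrate on $\varphi$ rather than, say, the null path or a delayed variant. I would bypass this by approximating $\varphi$ from above by strictly positive trajectories $\varphi^n_t:=\varphi_t+\eta_n\,t^{1/(1-\gamma)}$ with $\eta_n\downarrow 0$: then $\varphi^n_0=0$, $\varphi^n_t>0$ on $(0,T]$, and $\|\varphi^n-\varphi\|_\infty\to 0$. A short dominated-convergence argument---the contribution of $\{\varphi=0\}$ to $I_T(\varphi^n)$ is of order $\eta_n^{2(1-\gamma)}$, while on $\{\varphi>0\}$ the integrand converges pointwise to that of $I_T(\varphi)$ and is dominated by $2(\dot\varphi-\beta\varphi)^2/\varphi^{2\gamma}$ plus a term bounded uniformly in $n$ by $(\dot g-\beta g)^2/g^{2\gamma}$ with $g(t)=t^{1/(1-\gamma)}$---yields $I_T(\varphi^n)\to I_T(\varphi)$. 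Since $B(\varphi^n,R/2)\subset B(\varphi,R)$ for $n$ large, it is enough to establish the weak lower bound at each $\varphi^n$.

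The core task thereby becomes proving $X^\eps\to\varphi^n$ in $Q^\eps$-probability (uniformly in $t$) when $\varphi^n$ is strictly positive on $(0,T]$. I would split $[0,T]=[0,\tau]\cup[\tau,T]$ for a small $\tau>0$. On $[0,\tau]$, moment estimates in the spirit of Section \ref{s:expTight}, together with the boundedness of $\dot u^n$ near $t=0$ (immediate from the explicit form of the perturbation $\eta_n t^{1/(1-\gamma)}$), give $\sup_{t\le\tau}X^\eps_t=O(\tau^{1/(1-\gamma)})+o_\eps(1)$ in $Q^\eps$-probability, while $\sup_{t\le\tau}\varphi^n_t=O(\tau^{1/(1-\gamma)})$, so choosing $\tau$ small enough keeps the two paths within $R/2$ on $[0,\tau]$. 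On $[\tau,T]$, $\varphi^n$ is bounded below by a positive constant, hence $x\mapsto x^\gamma$ is Lipschitz in a neighbourhood of its graph, and Gronwall's lemma applied to $X^\eps-\varphi^n$ (started at the position reached at time $\tau$) yields uniform convergence modulo a stochastic-integral error of order $\eps$.

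The hard part will be this last splitting-and-Gronwall step: the simultaneous vanishing at $t=0$ of $\varphi$ and of $X^\eps_0=x^\eps$, combined with the non-Lipschitz coefficient $x^\gamma$, is exactly what forbids a direct Gronwall estimate on the full interval $[0,T]$ and what, at the ODE level, materialises as non-uniqueness; everything else is Girsanov plus the density argument. Once convergence under $Q^\eps$ is secured for the approximants, substituting back into the Jensen--Girsanov inequality of the first paragraph gives $\liminf_\eps\eps^2\log W(X^\eps\in B(\varphi,R))\ge -I_T(\varphi^n)$ for every $n$, and passing $n\to\infty$ via $I_T(\varphi^n)\to I_T(\varphi)$ concludes the proof of \eqref{weaklowerbound}.
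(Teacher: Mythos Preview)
Your overall architecture---Girsanov shift by a control, a Jensen/entropy inequality reducing the lower bound to convergence in probability of the shifted diffusion, and an approximation of $\varphi$ by strictly positive trajectories---is correct and is exactly the scheme the paper follows. The gap is in the convergence step $Q^\eps(X^\eps\in B(\varphi^n,R))\to 1$, and specifically in your splitting $[0,\tau]\cup[\tau,T]$.

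On $[0,\tau]$ your moment estimates only yield $\sup_{t\le\tau}X^\eps_t=O(\tau^{1/(1-\gamma)})+o_\eps(1)$: this is an \emph{upper} bound. It does not show that $X^\eps_\tau$ is close to $\varphi^n_\tau$, only that both are small. For Gronwall on $[\tau,T]$ you need $x\mapsto x^\gamma$ to be Lipschitz along the trajectory of $X^\eps$, i.e.\ you need $X^\eps$ to stay bounded below; but nothing in your argument prevents $X^\eps_\tau\approx 0$. If that happens the added drift $\sigma(X^\eps)^\gamma\dot u^n$ is itself $\approx 0$ and $X^\eps$ can linger near the null path---the other solution of the degenerate ODE---rather than climb to $\varphi^n$. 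Quantitatively, the Lipschitz constant on $[\tau,T]$ scales like $(\inf_{[\tau,T]}\varphi^n)^{\gamma-1}\sim(\eta_n\tau^{1/(1-\gamma)})^{\gamma-1}\sim\tau^{-1}$, so the Gronwall factor $e^{C/\tau}$ explodes faster than the initial discrepancy $O(\tau^{1/(1-\gamma)})$ shrinks; the degeneracy you identified has not been removed, only displaced. (A smaller point: $\dot u^n$ need not be bounded near $0$; your justification covers only the perturbation part, while the contribution from $\dot\varphi$ on $\{\varphi>0\}$ is merely in $L^2$.) The paper's remedy is to pass to the Lamperti variable $Y^\eps=(X^\eps)^{1-\gamma}$, which has \emph{constant} diffusion coefficient $\eps\sigma(1-\gamma)$. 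Under the shifted measure its drift is $\beta(1-\gamma)Y^\eps+\sigma(1-\gamma)\dot h$ plus a singular remainder bounded below by $-c\eps$ on $\{Y^\eps\ge\frac12\eps x^{1-\gamma}\}$; when $\dot h>k>0$ near $0$, comparison with an Ornstein--Uhlenbeck process with strictly positive drift $\sigma(1-\gamma)k-c\eps$ shows $Y^\eps$ escapes the level $\frac12\eps x^{1-\gamma}$ with probability tending to $1$. This is the missing \emph{lower} bound---the mechanism that selects the strictly positive solution of the limiting ODE---after which a localization-plus-Gronwall argument on $[\rho,T]$ (similar in spirit to yours) completes the proof.
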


\noindent
In the spirit of Lamperti's transformation, we introduce the process $Y^{\varepsilon}: = (X^{\varepsilon})^{1-\gamma}$.
$Y^{\varepsilon}$ satisfies a SDE with constant diffusion coefficient and a drift coefficient that we will be able to control.
We will prove a large deviation weak lower bound for $Y^{\varepsilon}$, and then transfer it to $X^{\eps}$ by means of the contraction principle.

\begin{proposition} \label{p:weakLowerY}
Define 
\[
\mathcal{I}_{T}(\psi) := \frac{1}{2\sigma^{2} (1-\gamma)^{2} } \int_{0}^{T} \left( \dot{\psi}_{t} - \beta(1-\gamma)  \psi_{t} \right)^{2}dt
\]
for $\psi \in \Omega_{\ge0}$, where $\mathcal{I}_{T}(\psi)=+\infty$ if $\psi(0)\neq0$ or $\psi$ is not absolutely continuous.
Then, for all $\psi $ such that $\mathcal{I}_T(\psi)<+\infty$, one has
\begin{equation}\label{weaklowerboundY}
\liminf_{R \rightarrow 0 } \liminf_{\varepsilon \rightarrow 0 } \varepsilon^{2} \log W \left( Y^{\varepsilon} \in B (\psi, R )  \right) \geq  -\mathcal{I}_{T}(\psi).
\end{equation}
In other words, the family $Y^{\varepsilon}$ satisfies a large deviation weak lower bound on $C([0,T],\R_+)$, with rate function $\mathcal{I}_T(\psi)$.
\end{proposition}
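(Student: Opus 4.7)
The plan is to exploit the fact, announced right before the statement, that $Y^{\varepsilon}=(X^{\varepsilon})^{1-\gamma}$ has \emph{constant} diffusion coefficient (Lamperti's trick), and then to follow the classical Girsanov-shift route that underpins the Freidlin--Wentzell weak lower bound. The rate function $\mathcal{I}_T$ is exactly what one would guess from a naive analysis of the constant-coefficient SDE $dY = \beta(1-\gamma)Y\,dt + \varepsilon\sigma(1-\gamma)\,dB$; the real work is to control the extra singular drift terms that It\^o's formula produces.

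I would first derive, by It\^o's formula,
\[
dY^{\varepsilon}_{t} = \mu^{\varepsilon}(Y^{\varepsilon}_{t})\,dt + \varepsilon\sigma(1-\gamma)\,dB_{t},\qquad
\mu^{\varepsilon}(y) = \beta(1-\gamma)y + (1-\gamma)y^{-\gamma/(1-\gamma)}\alpha^{\varepsilon}\bigl(y^{1/(1-\gamma)}\bigr) - \frac{\gamma(1-\gamma)\varepsilon^{2}\sigma^{2}}{2y}.
\]
The last two contributions are singular at $y=0$, but they carry the small prefactors $\varepsilon^{1/(1-\gamma)}$ and $\varepsilon^{2}$, respectively, so on any region where $y$ is bounded below by a positive constant one has $\mu^{\varepsilon}(y)\to\beta(1-\gamma)y$ uniformly as $\varepsilon\to 0$. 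Given $\psi$ with $\mathcal{I}_T(\psi)<\infty$, set $\dot h_{t}:=(\dot\psi_{t}-\beta(1-\gamma)\psi_{t})/(\sigma(1-\gamma))$, so that $\mathcal{I}_T(\psi)=\tfrac12\|\dot h\|_{L^{2}}^{2}$, and apply Girsanov to produce a measure $\tilde W$ under which $\tilde B_{t}:=B_{t}-h_{t}/\varepsilon$ is Brownian. Expanding the likelihood ratio yields
\[
W(Y^{\varepsilon}\in B(\psi,R)) = e^{-\mathcal{I}_T(\psi)/\varepsilon^{2}}\;\tilde E\!\left[\exp\!\Bigl(-\frac{1}{\varepsilon}\int_{0}^{T}\!\dot h_{s}\,d\tilde B_{s}\Bigr)\mathbf{1}_{\{Y^{\varepsilon}\in B(\psi,R)\}}\right].
\]
Since under $\tilde W$ the Wiener integral $\int_{0}^{T}\dot h\,d\tilde B$ is centered Gaussian with $\varepsilon$-independent variance $2\mathcal{I}_T(\psi)$, restricting the expectation to the event $\{|\int \dot h\,d\tilde B|\le K\}$ for a fixed large $K$ costs only a factor $e^{-K/\varepsilon}$, negligible on the $\varepsilon^{-2}$ scale. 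It thus suffices to show that $\tilde W(Y^{\varepsilon}\in B(\psi,R))$ stays bounded away from $0$ (ideally tends to $1$) as $\varepsilon\to 0$ for every fixed $R>0$.

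The main obstacle is precisely this convergence in $\tilde W$-probability. Under $\tilde W$,
\[
dY^{\varepsilon}_{t} = \bigl[\mu^{\varepsilon}(Y^{\varepsilon}_{t}) + \sigma(1-\gamma)\dot h_{t}\bigr] dt + \varepsilon\sigma(1-\gamma)\,d\tilde B_{t},\qquad Y^{\varepsilon}_{0}=\varepsilon x^{1-\gamma}\to 0,
\]
while the target $\psi$ solves $\dot\psi=\beta(1-\gamma)\psi+\sigma(1-\gamma)\dot h$ with $\psi_{0}=0$. A direct Gronwall comparison between $Y^\varepsilon$ and $\psi$ fails because the drift $\mu^\varepsilon(\cdot)$ is not Lipschitz on the (possibly substantial) zero-set of $\psi$. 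I would deal with this by an approximation argument: replace $\psi$ by a strictly positive $\psi^{\delta}$, for instance the solution of the same ODE started at $0$ but with shifted control $\dot h_{t}+\delta$, so that $\psi^{\delta}>0$ on $(0,T]$, $\psi^{\delta}\to\psi$ uniformly, and $\mathcal{I}_T(\psi^{\delta})\to\mathcal{I}_T(\psi)$ as $\delta\to 0$. On a tubular neighborhood of $\psi^{\delta}$ bounded below by $\delta/2$ the drift $y\mapsto\mu^{\varepsilon}(y)+\sigma(1-\gamma)\dot h_{t}$ is Lipschitz in $y$ uniformly in $(\varepsilon,t)$ for $\varepsilon$ small, and a pathwise Gronwall estimate combined with Doob's maximal inequality for $\varepsilon\tilde B$ yields $Y^{\varepsilon}\to\psi^{\delta}$ in $\tilde W$-probability in sup-norm; a diagonal argument in $(\delta,R)$ then delivers the announced weak lower bound. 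The repelling term $-\gamma(1-\gamma)\varepsilon^{2}\sigma^{2}/(2Y^{\varepsilon})$ is in fact helpful here, since it prevents $Y^{\varepsilon}$ from reaching $0$ and keeps every encounter with the singular set at a controllable positive distance.
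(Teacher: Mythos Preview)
Your overall route---Girsanov shift by $h$ with $\dot h=(\dot\psi-\beta(1-\gamma)\psi)/(\sigma(1-\gamma))$, reduction to the convergence in probability of the shifted process, and approximation of $\psi$ by the strictly positive $\psi^\delta$ obtained from $\dot h+\delta$---is exactly the paper's strategy. The paper packages the likelihood-ratio step through a relative-entropy inequality rather than your direct expansion, but the two are interchangeable.

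There is, however, a real gap in your convergence argument, and it is precisely the point where the paper has to work hardest. You write that on a tubular neighborhood of $\psi^\delta$ ``bounded below by $\delta/2$'' the drift $\mu^\varepsilon$ is uniformly Lipschitz, and then invoke Gronwall. But $\psi^\delta_0=0$, so no sup-norm tube around $\psi^\delta$ is bounded away from zero: near $t=0$ the tube contains arbitrarily small values of $y$, where $\mu^\varepsilon$ is singular and not Lipschitz. The initial condition $Y^\varepsilon_0=\varepsilon x^{1-\gamma}$ and the target $\psi^\delta$ both sit at the $0$-scale there, and a direct Gronwall comparison fails. The paper resolves this by a two-layer argument: first it observes that on $\{y\ge \tfrac12\varepsilon x^{1-\gamma}\}$ the singular drift terms are in fact $O(\varepsilon)$ (the $\varepsilon$-prefactors exactly compensate the blow-up at the $\varepsilon$-scale), compares $Y^{\varepsilon,h}$ with an Ornstein--Uhlenbeck process to show that the stopping time of hitting $\tfrac12\varepsilon x^{1-\gamma}$ exceeds $T$ with high probability, and applies Gronwall on that event over a short initial interval $[0,\rho]$ where $\dot h>k>0$; second, it uses the Markov property at time $\rho$, where $\psi^\delta_\rho>0$, to reduce $[\rho,T]$ to a genuinely nondegenerate situation in which your tube argument does work. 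Your sketch is missing this $\varepsilon$-scale analysis near $t=0$.

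A secondary point: the It\^o correction term $-\tfrac{\gamma(1-\gamma)\varepsilon^2\sigma^2}{2y}$ has \emph{negative} sign for $y>0$, so it pushes $Y^\varepsilon$ \emph{toward} zero, not away from it; it is not repelling, and the paper explicitly allows $X^\varepsilon$ (hence $Y^\varepsilon$) to hit zero (no Feller-type condition is assumed). What actually keeps $Y^{\varepsilon,h}$ away from zero with high probability is the strictly positive control $\dot h+\delta$, not the It\^o term.
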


Once we are provided with Proposition \ref{p:weakLowerY}, it is straightforward to prove the weak lower bound for $X^{\eps}$.

\begin{proof}[Proof of Proposition \ref{p:wlbX}]
Consider $\psi \in \Omega_{\ge 0}$ absolutely continuous.
By Lemma 3.45 in \cite{LeoniSobolev}, $\dot{\psi}=0 \: a.s.$ on $\{\psi=0\}$. 
Therefore, $\mathcal{I}_{T}$ defined in Proposition \ref{p:weakLowerY} can be rewritten as $\mathcal{I}_{T}(\psi) = \frac{1}{2\sigma^{2} (1-\gamma)^{2} } \int_{0}^{T} \bigl( \dot{\psi}_{t} - \beta(1-\gamma)  \psi_{t} \bigr)^{2} 1_{\psi_t \neq 0} dt$.
Using the definition of $Y^{\varepsilon}$ and \eqref{weaklowerboundY}, since the map $\psi \mapsto \varphi=\psi^{\frac{1}{1-\gamma}}$ is continuous on $\Omega_{\ge0}$, we can apply the contraction principle and obtain that $W \left( X^{\varepsilon} \in .\right)$ satisfies a large deviation weak lower bound with rate function $\bar{I}_T$.
Let us describe $\bar{I}_T(\varphi)$ when $\varphi$ is absolutely continuous and such that $I_T(\varphi)<\infty$ (where $I_T$ was defined in \eqref{e:rateFct}).
Let $\psi_t=\varphi_t^{1-\gamma}$.
On $\{\varphi = 0\}$, one has $\psi=0$ as well, while for a point $t$ in the open set $\{\varphi > 0\}$ such that $\dot{\varphi}_t$ exists, one has $\dot{\psi}_t = (1-\gamma) \frac{\dot{\varphi}_t}{\varphi_t^{\gamma}}$.
Then, noting that $I_T(\varphi)<\infty$ implies that $\frac{\dot{\varphi}_t}{\varphi_t^{\gamma}}1_{\varphi_t>0}$ is integrable on $[0,T]$, $\psi$ is also absolutely continuous on $[0,T]$ (see \cite[Corollary 3.41]{LeoniSobolev}), with derivative $\dot{\psi}_t=(1-\gamma) \frac{\dot{\varphi}_t}{\varphi_t^{\gamma}}1_{\varphi>0}$
This yields \be \bar{I}_T (\varphi) =
\mathcal{I}_{T}(\psi(\varphi)) =
\frac{1}{2\sigma^2(1-\gamma)^2}\int_{0}^{T}
\bigl( (1-\gamma) \frac{\dot{\varphi}_t}{\varphi_t^{\gamma}}
-\beta(1-\gamma)\varphi_t^{1-\gamma} \bigr)^2
1_{\varphi_t \neq 0} dt = I_T(\varphi) < \infty \ee.
If $I(\varphi)=\infty$, there is nothing to prove in \eqref{weaklowerbound}, and the claim follows.
\end{proof}

\subsubsection{Proof of Proposition \ref{p:weakLowerY}}

This section is devoted to the proof of the large deviation weak lower bound for the process $Y^{\eps}$ in \eqref{weaklowerboundY}.
While postponing some of the most technical elements to Appendix \ref{a:appendix1}, we will make use here of the following notation: for every $ h \in H, y \in \mathbb{R}$, we define $ \mathcal{S}_{y}(h)$ to be the unique solution on $[0,T]$ of the ODE
\be \label{ODEy}
\dot{\psi}_{t} = \beta (1-\gamma) \psi_{t} + \sigma (1-\gamma) \dot{h}_{t},  \quad \psi_{0}=y.
\ee
We denote $W^{\varepsilon,h}$ the measure on $\Omega$ associated to the Girsanov shift $-\frac1{\eps}\int_0^T \dot{h}_t dt$,

\begin{equation}\label{density}
\frac{ d W^{\varepsilon,h}}{dW} \left( \omega \right) = \exp \left( \frac{1}{\varepsilon} \int_{0}^{T} \dot{h}_{t}dB_{t} -\frac{1}{2\varepsilon^{2}} \int_{0}^{T} \dot{h}^{2}_{t} dt\right).
\end{equation}
An application of Girsanov's Theorem shows that $W \left( X^{\varepsilon,h} \in \cdot \right) \stackrel{ d}{=} W ^{\varepsilon,h} \left(X^{\varepsilon} \in \cdot \right)$, where $X^{\varepsilon,h}$ solves:
\begin{equation} \label{GirsanovCIRCEV}
dX^{\varepsilon,h}_{t} = b^{\varepsilon} (X^{\varepsilon,h}_{t} )dt +\sigma {|X^{\varepsilon,h}_{t}|}^{\gamma} \dot{h}_{t} dt + \varepsilon \sigma  {|X^{\varepsilon,h}_{t}|}^{\gamma} dB_{t}, \quad X^{\varepsilon,h}_{0} = \varepsilon^{\frac{1}{1-\gamma}} x.
\end{equation}
We also define the process $Y^{\varepsilon,h} := |X^{\varepsilon,h}|^{1-\gamma}$.

\begin{remark}
Note that for \eqref{GirsanovCIRCEV} there exists a weak solution, which we construct directly from a solution of \eqref{scaledCIRCEV} applying Girsanov's Theorem.
Since pathwise uniqueness holds for the couple $(b,\sigma)$, another application of the same theorem shows that pathwise uniqueness for \eqref{scaledCIRCEV} implies pathwise uniqueness for \eqref{GirsanovCIRCEV}. Therefore we can always assume that $X^{\varepsilon,h}$  solves \eqref{GirsanovCIRCEV} with the Brownian motion $B$.
\end{remark}

Two main ingredients enter in the proof of Proposition \ref{p:weakLowerY}: the convergence in law (under some conditions on $h$) of the process $Y^{\eps,h}$ to the deterministic limit $S_0(h)$ under the measure $W$ (equivalently: the weak convergence of the measure $W^{\varepsilon,h} \left( Y^{\varepsilon} \in .\right)$ to $\delta_{S_{0}(h)}$), and a lower bound for the probability $W\left(Y^{\varepsilon} \in B (\psi, R )  \right)$ depending explicitly on the relative entropy between the two measures $W^{\eps,h}$ and $W$.
This is the content of the two following lemmas.

\begin{lemma}[Convergence in law of $Y_{\cdot}^{\eps,h}$] \label{WeakConv}
Let $h \in H$ be such that
\begin{equation}\label{WeakConvAss}
(i) \ S_{0}(h)_{t}  > 0, \quad \forall t \ \in (0,T]; \qquad (ii) \ \dot{h}_{t} > k \mbox{ in a neighborhood of $0$, for some $k >0$}.
\end{equation}
Then, the process $Y^{\eps,h}$ converges in law to $S_0(h)$ under $W$, as $\eps \to 0$.
\end{lemma}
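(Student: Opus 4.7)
The plan is to reduce to the Lamperti-transformed process $Y^{\eps,h}=(X^{\eps,h})^{1-\gamma}$. By It\^o's formula,
\begin{align*}
dY^{\eps,h}_t = \big[\beta(1-\gamma) Y^{\eps,h}_t + \sigma(1-\gamma)\dot h_t + r^\eps_t\big]\,dt + \eps\sigma(1-\gamma)\,dB_t,\qquad Y^{\eps,h}_0=\eps x^{1-\gamma},
\end{align*}
where the perturbation $r^\eps_t := (1-\gamma)\alpha^\eps(X^{\eps,h}_t)/(X^{\eps,h}_t)^\gamma - \tfrac{\gamma(1-\gamma)\sigma^2}{2}\,\eps^2/Y^{\eps,h}_t$ collects the $\alpha^\eps$-contribution and the It\^o correction. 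Since $\psi:=\mathcal{S}_0(h)$ satisfies the same affine ODE without $r^\eps$ and with $\psi_0=0$, variation of constants gives
\begin{align*}
\sup_{t\le T}|Y^{\eps,h}_t-\psi_t|\;\le\; C\Big(\eps x^{1-\gamma} + \int_0^T|r^\eps_s|\,ds + \eps\sigma(1-\gamma)\sup_{t\le T}|B_t|\Big)
\end{align*}
for a constant $C=C(\beta,\gamma,T)$. The initial-value term and the Brownian term are $O(\eps)$ in probability, so the whole problem reduces to showing $\int_0^T|r^\eps_s|\,ds\to 0$ in $W$-probability; convergence in law then follows from convergence in probability in the uniform topology.

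Both summands of $r^\eps$ are singular when $Y^{\eps,h}$ is small, so the core of the argument is a \emph{deterministic lower envelope} for $Y^{\eps,h}$ on $[0,T]$. This is precisely where hypotheses (i)--(ii) enter. From (ii), i.e.\ $\dot h_t\ge k>0$ on some $[0,t_0]$, together with the nonnegativity of $\alpha$ near $0$ from (H2), a stochastic comparison argument (dominating $X^{\eps,h}$ from below by the solution of the SDE obtained by dropping the $\alpha^\eps$ and $\beta X$ contributions, whose Lamperti transform has the explicit linear envelope $c(\eps+t)$ started at $\eps x^{1-\gamma}$ with minimal deterministic push) should yield
\begin{align*}
W\Big(Y^{\eps,h}_t\ge c(\eps+t)\text{ for all }t\in[0,t_0]\Big)\longrightarrow 1.
\end{align*}
Condition (i) gives $\psi_t\ge c'>0$ uniformly on $[t_0,T]$; then a bootstrap stopping-time argument based on $\tau^\eps:=\inf\{t\ge t_0:Y^{\eps,h}_t\le \psi_t/2\}$ applied to the SDE for $Z^\eps:=Y^{\eps,h}-\psi$ propagates this to $Y^{\eps,h}_t\ge c'/2$ on $[t_0,T]$ with probability tending to one.

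With these envelopes in hand, $\int_0^T|r^\eps_s|ds$ is controlled by elementary estimates. The It\^o-correction contribution is at most $\eps^2\int_0^{t_0}ds/(c(\eps+s))+2\eps^2T/c'=O(\eps^2\log(1/\eps))$. For the $\alpha^\eps$-piece, write $(X^{\eps,h})^{-\gamma}=(Y^{\eps,h})^{-\gamma/(1-\gamma)}$ and plug in the envelope: a short case-distinction on $\gamma<1/2$, $\gamma=1/2$, $\gamma>1/2$ (where $\int_0^{t_0}(\eps+s)^{-\gamma/(1-\gamma)}ds$ is respectively bounded, logarithmic, or of order $\eps^{(1-2\gamma)/(1-\gamma)}$) shows that the $\eps^{1/(1-\gamma)}$ prefactor brings the whole quantity down to $O(\eps^2\log(1/\eps))$ in every regime. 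Combining these gives $Y^{\eps,h}\to\psi$ in probability, hence in law, under $W$. The hard part is unambiguously the deterministic lower bound $Y^{\eps,h}\gtrsim \eps+t$ near $0$: it is the only place where assumptions (i)--(ii) on $h$ (and the sign of $\alpha$ near $0$) are essential, and by far the most delicate step of the proof.
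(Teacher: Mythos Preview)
Your overall architecture—It\^o's formula for $Y^{\eps,h}$, a variation-of-constants/Gronwall estimate for $Y^{\eps,h}-\psi$, and a lower envelope for $Y^{\eps,h}$ to control the singular remainder $r^\eps$—matches the paper's strategy. The difference lies entirely in the envelope step, which you yourself flag as the crux, and here the proposal has a genuine gap.

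The comparison you sketch (``drop $\alpha^\eps$ and $\beta X$ and Lamperti-transform'') does not deliver what you claim. Dropping $\alpha^\eps(x)+\beta x$ at the level of $X^{\eps,h}$ is not a valid one-sided comparison unless $\alpha^\eps(x)+\beta x\ge0$ for all $x\ge0$, which fails for $\beta<0$ (and even for $\beta\ge0$, since $\alpha$ is only assumed nonnegative near zero). More importantly, the Lamperti transform of the reduced process still carries the It\^o correction $-\tfrac{\gamma(1-\gamma)\sigma^2}{2}\eps^2/\tilde Y$; so the comparison process is \emph{not} of the form $\eps x^{1-\gamma}+\sigma(1-\gamma)kt+\eps\sigma(1-\gamma)B_t$ and there is no ``explicit linear envelope $c(\eps+t)$'' to read off. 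Your subsequent integral estimates then rest on a bound that has not been established.

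The paper avoids this by asking for much less: it only establishes the \emph{constant} envelope $\{Y^{\eps,h}\ge\tfrac12\eps x^{1-\gamma}\}$. On this set one checks directly that $|r^\eps_t|\le c\,\eps$ \emph{pointwise} (both singular pieces scale exactly like $\eps$ once $Y\gtrsim\eps$), so $\int_0^T|r^\eps_s|\,ds=O(\eps)$—no time-singular integrand, no case analysis in $\gamma$. To show $Y^{\eps,h}$ stays above $\tfrac12\eps x^{1-\gamma}$ under (ii), the paper uses the lower bound $r^\eps\ge -c\eps$ on that set to compare $Y^{\eps,h}$ with an explicit Ornstein--Uhlenbeck process $Z$ with drift $-c\eps+\sigma(1-\gamma)k+\beta(1-\gamma)Z$ started at $\eps x^{1-\gamma}$; after dividing by $\eps$ this OU has constant drift $\sim k/\eps\to+\infty$, and a direct computation shows it stays above $\tfrac12 x^{1-\gamma}$ with probability tending to one. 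This closes Step~1 without circularity. The second phase (restart at time $\rho$ with $\psi$ bounded away from zero under (i), truncate the drift at a fixed positive level, apply Gronwall) is essentially what you describe via your stopping time $\tau^\eps$.

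Two minor points. It\^o's formula for $x\mapsto x^{1-\gamma}$ is not valid at $x=0$, so your SDE for $Y^{\eps,h}$ only holds up to the first hitting time of a small positive level; this must be made explicit and is precisely the stopping time driving the envelope argument. And the case $\gamma<1/2$ in your case distinction is outside hypothesis (H1).
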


\begin{lemma}[Relative entropy bound] \label{l:entropy}
Let $ \left(\Omega,\mathcal{F} \right)$ be a probability space and $P$,$Q$ two probability measures on $\left( \Omega , \mathcal{F} \right)$ such that $dQ=FdP$. The relative entropy $H(Q|P)$ is defined as:
\begin{equation*}
H(Q|P):= \int_{\Omega} F \log(F) dP
\end{equation*}
Then, $\forall A \in \mathcal{F}$ we have:
\begin{equation} \label{e:entropy}
\log \left( \frac{P(A)}{Q(A)} \right) \geq -\frac{e^{-1} + H(Q|P)}{Q(A)}.
\end{equation}
\end{lemma}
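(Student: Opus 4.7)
The statement is a classical estimate for the relative entropy that can be derived from Jensen's inequality together with the elementary bound $x \log x \ge -e^{-1}$ for $x \ge 0$. The plan is to first dispose of trivial cases (if $Q(A)=0$ the right-hand side is interpreted as $-\infty$ and there is nothing to prove; if $P(A)=0$ then necessarily $Q(A)=0$ as well since $Q \ll P$, so we may assume $P(A), Q(A) > 0$). Throughout I will use that $F>0$ outside a set of $Q$-measure zero, so $1/F$ makes sense $Q$-a.s.

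The first key step is a Jensen-type estimate. Write
\[
\frac{P(A)}{Q(A)} \;=\; \frac{1}{Q(A)} \int_A \frac{1}{F}\, dQ,
\]
and apply Jensen's inequality to the convex function $-\log$ with respect to the probability measure $Q(\,\cdot\, \cap A)/Q(A)$ on $A$. This yields
\[
-\log\!\left(\frac{P(A)}{Q(A)}\right) \;\le\; \frac{1}{Q(A)} \int_A \log F\, dQ,
\]
i.e.\ $\log(P(A)/Q(A)) \ge -\frac{1}{Q(A)} \int_A \log F\, dQ$. So the problem reduces to bounding $\int_A \log F\, dQ$ from above by $e^{-1} + H(Q|P)$.

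The second step is the decomposition
\[
\int_A \log F\, dQ \;=\; H(Q|P) \;-\; \int_{A^c} \log F\, dQ \;=\; H(Q|P) \;-\; \int_{A^c} F \log F\, dP,
\]
where we used $dQ = F\, dP$ in the last equality. The elementary one-variable fact that $x \log x \ge -e^{-1}$ for every $x \ge 0$ (with minimum attained at $x = e^{-1}$, and the usual convention $0\log 0 = 0$) then gives
\[
\int_{A^c} F \log F\, dP \;\ge\; -e^{-1}\, P(A^c) \;\ge\; -e^{-1},
\]
so that $\int_A \log F\, dQ \le H(Q|P) + e^{-1}$. Inserting this into the Jensen bound of the first step yields the claimed inequality \eqref{e:entropy}.

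There is no real obstacle here: the only points requiring mild care are the integrability of $\log F$ under $Q$ on $A$ (handled by splitting $\log F = (\log F)_+ - (\log F)_-$ and noting that $(\log F)_+$ is integrable against $Q$ because $H(Q|P) < \infty$ is the only non-trivial case, while $(\log F)_- \cdot F = -(F \log F) \mathbf{1}_{F<1}$ is bounded above by $e^{-1}$ pointwise), and the degenerate situations $Q(A)=0$ or $H(Q|P)=+\infty$, both of which make the right-hand side of \eqref{e:entropy} equal to $-\infty$ and hence trivially valid.
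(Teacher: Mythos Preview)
Your proof is correct and follows essentially the same route as the paper: both apply Jensen's inequality to $-\log$ against the conditional measure $Q(\cdot\cap A)/Q(A)$ to obtain $\log(P(A)/Q(A))\ge -\frac{1}{Q(A)}\int_A F\log F\,dP$, and then use the elementary bound $x\log x\ge -e^{-1}$. The only cosmetic difference is that the paper bounds $\int_A F\log F\,dP$ by $\int_\Omega (F\log F)^+\,dP\le H(Q|P)+e^{-1}$ via the positive part, whereas you reach the same bound by writing $\int_A F\log F\,dP = H(Q|P)-\int_{A^c}F\log F\,dP$ and applying $x\log x\ge -e^{-1}$ on $A^c$.
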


\begin{proof}
Applying Jensen's inequality, one has
\begin{equation*}
\log \left( \frac{P(A)}{Q(A)} \right) \geq \log \left( \int_{A} F^{-1} \frac{dQ}{ Q(A)} \right) \geq 
-\frac{1}{Q(A)}\int_{A}\log( F )  dQ   \geq -\frac{1}{ Q(A)}\int_{A}(\log( F ) F)^{+} dP. 
\end{equation*}
Using the elementary fact that $\inf_{x \geq 0} x \log (x) \geq -\frac{1}{e}$:
\begin{equation*}
-\frac{1}{ Q(A)}\int_{A}(\log( F ) F)^{+} dP \geq -\frac{e^{-1} + H(Q|P)}{Q(A)},
\end{equation*}
which proves \eqref{e:entropy}.
\qedBl
\end{proof}

The relative entropy $H(W^{\varepsilon,h}|W)$ is easily computed using the martingale property of $F^{\eps,h}_t=\exp\bigl(\frac{1}{\varepsilon} \int_{0}^{t} \dot{h}_{s}dB_{s} -\frac{1}{2\varepsilon^{2}} \int_{0}^{t} \dot{h}^{2}_{s} ds \bigr)$ and It\^o isometry:
\[
\begin{aligned}
H(W^{\varepsilon,h}|W) &=
\ex \left( F_T^{\eps,h}
\left( \frac{1}{\varepsilon} \int_{0}^{T} \dot{h}_{t}dB_{t} -\frac{1}{2\varepsilon^{2}} \int_{0}^{T} \dot{h}^{2}_{t} dt\right)\right) \\
&= \ex \left( \frac{1}{\varepsilon}\int_{0}^{T}
F_t^{\eps,h}
\dot{h}_t dB_{t}
\times
\frac{1}{\varepsilon} \int_{0}^{T} \dot{h}_{t}dB_{t} 
\right)
-\frac{1}{2\varepsilon^{2}} \int_{0}^{T} \dot{h}^{2}_{t} dt 
\\
&= \frac{1}{\varepsilon^2} \int_{0}^{T}\dot{h}^{2}_{t}dt  -\frac{1}{2\varepsilon^{2}} \int_{0}^{T} \dot{h}^{2}_{t} dt,
\end{aligned}
\]
therefore
\begin{equation} \label{e:GirsEntropy}
H \left( W^{\varepsilon,h} | W  \right) = \frac{1}{2\varepsilon^{2}} \int_{0}^{T} \dot{h}^{2}_{t} dt.
\end{equation}

The proof of Lemma \ref{WeakConv} is postponed to Appendix \ref{a:appendix1}; using this lemma and Lemma \ref{l:entropy}, we can achieve here the proof of Proposition \ref{p:weakLowerY}, completing the proof of the large deviation weak lower bound for the process $X^{\eps}$.

\begin{proof}[Proof of Proposition \ref{p:weakLowerY}]
If $\mathcal{I}_{T}(\psi)=\infty$, \eqref{weaklowerboundY} is trivially true.
Then, consider $\psi \in \Omega_{\ge 0}$ such that $\mathcal{I}_{T}(\psi)<\infty$, and define $h \in H$ by setting $\dot{h}_t = \frac{\dot{\psi}_{t} - \beta(1-\gamma) \psi_{t}}{\sigma(1-\gamma)} $, so that $\mathcal{S}_{0}(h)= \psi$.

\emph{Step 1.} Assume that $h$ is such that \eqref{WeakConvAss} holds true.
An application of the relative entropy bound \eqref{e:entropy} with $P=W$, $Q=W^{\eps,h}$ yields
\begin{equation*}
\varepsilon^{2} \log \left( W \left(Y^{\varepsilon} \in B(\psi,R) \right) \right) \geq -\varepsilon^{2} \frac{\left( e^{-1} + H(W^{\varepsilon,h} | W )\right)} {W^{\varepsilon,h} \left( Y^{\varepsilon} \in B(\psi,R) \right)}
+ \varepsilon^{2} \log W^{\varepsilon,h} ( Y^{\varepsilon} \in B(\psi,R)).
\end{equation*}
Using $W^{\varepsilon,h} ( Y^{\varepsilon} \in B(\psi,R))= W^{\varepsilon} ( Y^{\varepsilon,h} \in B(\psi,R)) \to 1$ for every $R>0$ by Proposition \ref{WeakConv}, and the expression of $H(W^{\varepsilon,h} | W )$ from \eqref{e:GirsEntropy}, taking the limit as $\eps \to 0$ we obtain \eqref{weaklowerboundY}.

\emph{Step 2.} Assume now $\psi \in C^{1}([0,1])$.
Let $h$ be defined as above, and define $h^n \in H$, $n \in \mathbb{N}$, by
 \begin{equation}\label{hn}
\dot{h}^{n}_{t} := \dot{h}_t +1/n.
\end{equation} 
We claim that $\forall n \in \mathbb{N}$, ${h}^n$ satisfies \eqref{WeakConvAss}. 
Let us first prove that condition (ii) in \eqref{WeakConvAss} holds.
Observe that $\psi \geq 0$ and $\psi_0 = 0$ imply $\dot{\psi}_0 \geq 0$, hence $\dot{h}^n_0 \geq 1/n$.
By the continuity of $\dot{h}^n$, ensured by the fact that $\psi \in C^1([0,T])$, it follows that the condition $(ii)$ in \eqref{WeakConvAss} holds with, say, $k=1/(2n)$.
In order to prove condition $(i)$, we observe that the comparison principle for ODEs implies that $\forall t \in (0,T], \ \mathcal{S}_0(h^n)_{t} > \mathcal{S}_0(h)_{t} = \psi_{t} \geq 0$; condition $(i)$ is then proved.
Furthermore, by the continuity of the solution to \eqref{ODEy} with respect to the control parameter $h$, one has
\begin{equation}\label{densityarg}
\| \mathcal{S}_{0}(h^n)- \psi \|_{\infty} \rightarrow 0 \qquad \mbox{as } n \to \infty. 
\end{equation}
It follows from \eqref{densityarg} that, for any $R>0$
\begin{equation} \label{e:weakYn}
W \left( Y^{\varepsilon} \in B (\psi, R )  \right) \geq W \left( Y^{\varepsilon} \in B (\mathcal{S}_0(h^n), R/2 )  \right) 
\end{equation}
if $n$ is large enough.
In the first part of the proof, we have shown that the weak lower bound holds for $W \left( Y^{\varepsilon} \in B(\mathcal{S}_0(h^n), R/2) \right)$; then, taking the limits as $\varepsilon\to 0$ and $R\to 0$ in \eqref{e:weakYn}, one has
\begin{equation*}
\liminf_{R \rightarrow 0 } \liminf_{\varepsilon \rightarrow 0 } \varepsilon^{2} \log W \left( Y^{\varepsilon} \in B (\psi, R )  \right) \geq -\mathcal{I}_{T}(\mathcal{S}_0(h^n)) \quad \mbox{for every } n\in \mathbb{N}.
\end{equation*}
Since $\mathcal{I}_{T}(\mathcal{S}_{0} (h^n) )
= \frac12 \int_0^T (\dot h^n)^2 dt
\rightarrow
\frac12 \int_0^T (\dot h)^2 dt
= \mathcal{I}_{T}(\psi)$, the bound \eqref{weaklowerboundY} follows.
Finally, a standard density argument of $C^1([0,1])$ functions in $C([0,1])$ allows to extend the claim to any $\psi \in \Omega_{\ge 0}$ such that $\mathcal{I}_{T} (\psi)<+\infty$.
\qedBl
\end{proof}

\begin{remark} \label{r:particularLimit}
In a classical situation, the claim would be the lower bound \eqref{weaklowerbound} for a process $X^{\varepsilon}$ satisfying, say, $dX^{\eps} = b_{\eps}(X^{\eps}) + \eps \sigma(X^{\eps})dB$ with Lipschitz coefficients $\sigma$ and $b_{\eps} \to b_0$, and $X^{\eps}_0=x^{\eps} \to x$.
In this setting, fixing a control $h \in H$ and defining $X^{\eps,h}$ from $X^{\eps}$ by shifting the Brownian motion $B$ as in \eqref{GirsanovCIRCEV}, it is straightforward (in fact: an application of Gronwall's Lemma) to show that $X^{\eps,h}$ converges in law to the unique solution of the deterministic limit equation $d\varphi=b_0(\varphi)dt+\sigma(\varphi)dh, \varphi_0=x$.
In the present (degenerate) situation, the deterministic limit equation for the process $X^{\eps,h}$ (obtained setting $\eps=0$ in \eqref{GirsanovCIRCEV}) coincides with the ODE \eqref{degODE} which admits infinitely many solutions.
When circumventing this problem by passing through the transformed process $Y^{\eps,h}$, we actually show that the convergence in law of $X^{\eps,h}$ to a \emph{particular} solution $\varphi^*$ of the limiting equation is restored.
Indeed, assume as in Proposition \ref{WeakConv} that $h$ is such that the unique solution $\psi$ of the well-posed equation \eqref{ODEy} with $y=0$ is positive for every $t>0$, and $Y^{\eps,h}$ converges in law to $\psi$.
The function $\psi$ is easily computed, namely $\psi_t=\sigma(1-\gamma)e^{\beta(1-\gamma)t} \int_0^t e^{-\beta(1-\gamma)s} \dot{h}_s ds$.
By definition, one has $X^{\eps,h}=\left(Y^{\eps,h}\right)^{\frac1{1-\gamma}} \stackrel{W}{\longrightarrow} \psi^{\frac1{1-\gamma}}=:\varphi^*$.
By direct computation, $\varphi^*$ is absolutely continuous and such that $\varphi^*_0=0$ and $\dot{\varphi}^*=\beta \varphi^*+\sigma(\varphi^*)^{\gamma}\dot{h}$, hence $\varphi^*$ is \emph{a} solution to $\eqref{degODE}$; in particular,
\be \label{e:particularSol}
\varphi^{*}_{t} := e^{\beta t }
\Bigl( \sigma (1-\gamma)  \int_{0}^{t} e^{-\beta (1-\gamma) s} \dot{h}_{s} ds \Bigr)^{\frac{1}{1-\gamma}}.
\ee
Therefore, in the small noise limit, the stochastic dynamics \eqref{GirsanovCIRCEV} performs a selection among the solutions of the limiting deterministic system $\eqref{degODE}$, selecting the strictly positive one, $\varphi^*$.
This looks reasonable in light of the fact that, though converging to zero, the drift parameter $\alpha^{\eps}$ and the initial condition $x^{\eps}$ of the process remain strictly positive for all $\eps>0$.\footnote{By perturbing the initial condition and the drift in \eqref{degODE}, one can retrieve the trajectory $\varphi^*$ in \eqref{e:particularSol} as the limit as $\rho \to 0$ of the solution of the equation $d\varphi_{t} = \rho + \beta \varphi_t dt + \sigma \varphi_t^{\gamma}dh, \varphi_{0} = \rho$, for which existence and uniqueness hold.}
Figure \ref{fig1} shows the convergence of simulated trajectories of the process $X^{\eps,h}$ to $\varphi^*$ in \eqref{e:particularSol} as $\eps \to 0$, for a given choice of the control parameter $h$.
\end{remark}

\begin{remark}[Lower bound from the upper bound]
In general, the weak convergence of the controlled process $X^{\eps,h}$ can be shown exploiting the large deviation upper bound.
This goes as follows: in the notation of Remark \ref{r:particularLimit}, assume $X^{\eps}$ satisfies $dX^{\eps} = b^{\eps}(X^{\eps}) + \eps \sigma(X^{\eps})dB$ with Lipschitz coefficients,
and define $X^{\eps,h}$ from $X^{\eps}$ as in \eqref{GirsanovCIRCEV}.
Assume one has proven a large deviation upper bound analogous to \eqref{e:weakupperbound} for the process $X^{\eps,h}$, with a good rate function $I^{h}$ depending on the control parameter $h$, $I^{h} \left( \psi \right) := \frac{1}{2}\int^{T}_{0} \left( \frac{\dot{\psi}_t- b_0(\psi_t) - \sigma(\psi_t)\dot{h}_t}{\sigma(\psi_t)}\right)^2 dt$.
It is clear that $I^{h}$ admits as a unique zero the solution $\varphi(h)$ of $\dot{\psi}_t=b_0(\psi_t)+\sigma(\psi_t)\dot{h}_t$.
Using the compactness of the level sets of $I^{h}$ and the large deviation upper bound, it is easy to conclude that
\[
\lim_{\varepsilon \rightarrow 0}
W \left(X^{\varepsilon,h} \notin B(\varphi(h),R) \right)
=0
\quad \forall R>0,
\]
hence $X^{\varepsilon,h} \to \varphi(h)$ in law.
This provides a way of ``bootstrapping'' the large deviation lower bound from the upper bound (via weak convergence, together with the bound on relative entropy in Lemma \ref{l:entropy}).
When the limit ODE has several solutions, this approach is not possible anymore: in the present case, the rate function $I^{h}\left( \psi \right)= \frac{1}{2}\int^{T}_{0} \left( \frac{\dot{\psi}_t-\beta\psi_t-\psi_t^{\gamma}\dot{h}_t}{\psi^{\gamma}_t} \right)^2 \mathbb{1}_{ \left\{ \psi_t>0 \right\} }dt$ has uncountably many zeroes, corresponding to the possible solutions of the degenerate ODE \eqref{degODE}.
While one is expecting that converging subsequences of the family of measures $\{ W(X^{\eps,h} \in \cdot)\}_{\eps}$ converge to a probability distribution supported by the set of solutions, it is not obvious a priori how to restore a unique limit for $X^{\eps,h}$ (which is why we pass through the transformed process $Y^{\eps,h}$).
When uniqueness for the limiting equation is granted, such an approach remains efficient, and applies outside the Markovian framework (see \cite{ChiariniFischer} for a treatment of delayed equations. 
In the setting of \cite{ChiariniFischer}, uniqueness of solutions for the deterministic sytem is essential, and enters via their condition (H4)).
\end{remark}

\begin{figure}[t]
\centering
\includegraphics[scale=0.7]{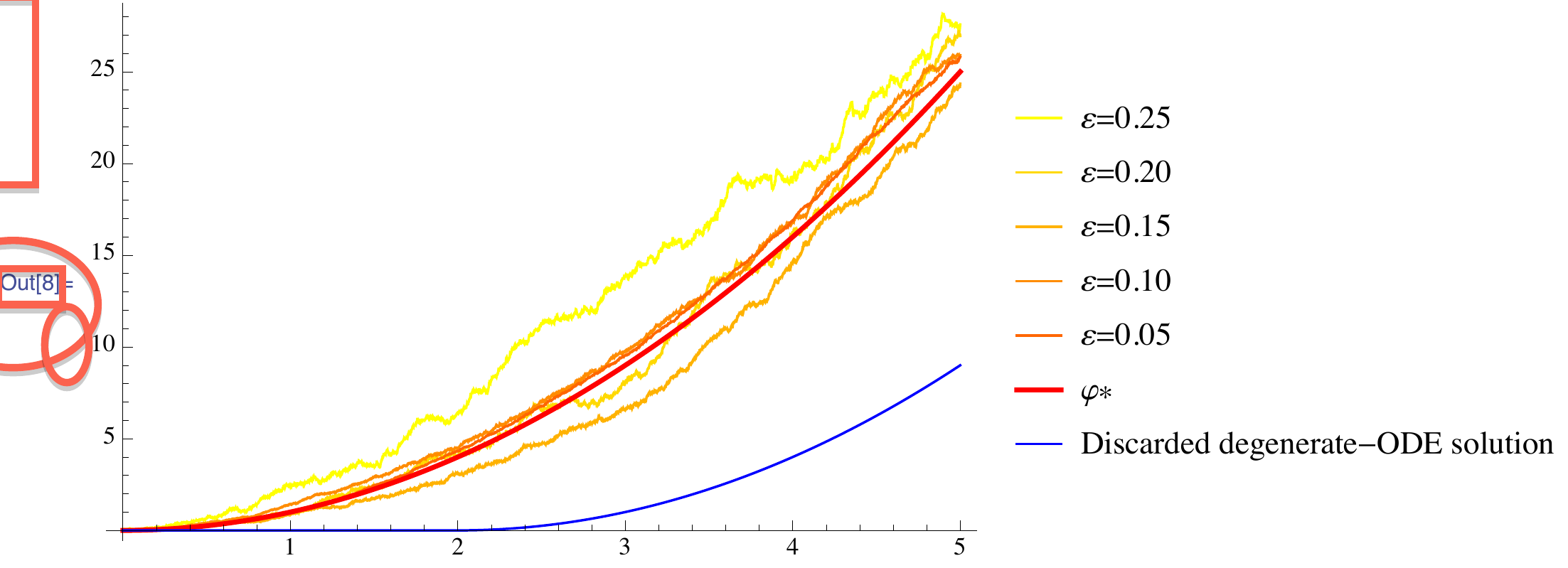}
\caption{An illustration of the convergence
of the process $X^{\eps,h}$ in \eqref{GirsanovCIRCEV} to a particular solution $\varphi^*$ of the limiting deterministic sytem.
Trajectories have been simulated for different values of the noise parameter $\varepsilon $ and $\gamma=1/2,\alpha(x)\equiv 1,\beta=0,\sigma=2, \dot{h}=1, x=0$.
}
\label{fig1}
\end{figure}

\subsection{Proof of tail estimates} \label{s:tailAsymptProofs}

In this section, we prove the asymptotic estimates that have been stated in Section \ref{s:tailAs} and that follow from Theorem \ref{main}.

\begin{proof}[Proof of Proposition \ref{CIRCEVtails}]
Setting $\eps: = R^{-(1-\gamma)}$ into \eqref{e:LD}, one has
\begin{equation*}
\limsup_{R \rightarrow + \infty } R^{-2(1-\gamma)} \log W \left( X_{T} \geq R \right)
=
\limsup_{\eps \rightarrow 0 } \eps^2 \log W \left( X^{\eps}_{T} \geq 1 \right)
\le -P
\end{equation*}
where
\begin{equation*}
\begin{aligned}
P &= 
\inf\left\{I_{T} \left( \varphi \right): \varphi_{0}=0, \varphi \ge 0, \varphi_{T} \geq 1 \right\}
\\
&= 
\inf_{y \ge 1}
\inf \left\{I_{T} \left( \varphi \right): \varphi_{0}=0, \varphi \ge 0, \varphi_{T} \geq y \right\}
=:
\inf_{y \ge 1} P(y).
\end{aligned}
\end{equation*}
Fix $y \ge 1$ and a function $\varphi$ in the admissible set of $P(y)$, such that $I_T(\varphi)<\infty$.
Set $\psi_t=\varphi^{1-\gamma}_t$.
On $\{\varphi = 0\}$, one has $\psi=0$ as well, while for a point $t$ in the open set $\{\varphi > 0\}$ such that $\dot{\varphi}_t$ exists, one has $\dot{\psi}_t = (1-\gamma) \frac{\dot{\varphi}_t}{\varphi_t^{\gamma}}$.
Then, noting that $I_T(\varphi)<\infty$ implies that $\frac{\dot{\varphi}_t}{\varphi_t^{\gamma}}1_{\varphi_t>0}$ is integrable on $[0,T]$, $\psi$ is also absolutely continuous on $[0,T]$ (see \cite[Corollary 3.41]{LeoniSobolev}).
Moreover, $I_T(\varphi)=\frac1{2\sigma^2} \int_0^T \bigl(\frac{\dot{\varphi}_t-\beta\varphi_t}{\varphi_t^{\gamma}} \bigr)^2 1_{\varphi_t>0} dt = \frac1{2\sigma^2(1-\gamma)^2}\int_0^T (\dot{\psi}_t-\beta(1-\gamma)\psi_t)^2 1_{\psi_t>0} dt$.
Noting that the inverse transformation $\varphi=\psi^{\frac1{(1-\gamma)}}$ also maps AC positive functions to AC positive functions (as $\frac1{(1-\gamma)}>1$), one has
\[
P(y) = 
\frac1{2\sigma^2(1-\gamma)^2}
\inf \left\{\int_0^T \bigl(\dot{\psi}_t-\beta(1-\gamma)\psi_t\bigr)^2 1_{\psi_t>0} dt: \psi \mbox{ is abs. cont.}, \psi_{0}=0, \psi \ge 0, \psi_{T} = y^{1-\gamma} \right\}.
\]
When $\beta=0$, the minimizer of this problem is $\psi^*_t(y)=y^{1-\gamma} t/T$.
When $\beta\neq 0$, the solution of the Euler-Lagrange equation associated with the Lagrangian $(\dot{\psi}-\beta(1-\gamma)\psi)^2$ and the boundary conditions $\psi_0=0,\psi_T=y^{1-\gamma}$ yields the minimizer
\[
\psi^*_t(y) = \frac{y^{1-\gamma}}{e^{\beta(1-\gamma)T}-e^{-\beta(1-\gamma)T}}
(e^{\beta(1-\gamma)t}-e^{-\beta(1-\gamma)t}).
\]
In both cases, $\psi^*_t(y)>0$ for all $t\in(0,T]$, and the positivity constraint in $P(y)$ can be dropped.
Using the monotonicity of $\psi^*$ w.r.t. $y$, this yields $\inf_{y \ge 1} P(y)=P(1)=\frac1{2\sigma^2(1-\gamma)^2} \int_0^T \bigl(\dot{\psi}^*_t(1)-\beta(1-\gamma)\psi^*_t(1)\bigr)^2 dt$.
An application of the large deviation lower bound \eqref{e:LD} gives
$\liminf_{R \rightarrow + \infty } R^{-2(1-\gamma)} \log W \left( X_{T} > R \right)
= \liminf_{\eps \rightarrow 0 } \eps^2 \log W \left( X^{\eps}_{T} > 1 \right) = -\inf_{y>1} P(y) = -P(1)$.
Finally, the explicit evaluation of the integral in $P(1)$ over the function $\psi^*$ yields the expression of the constant $c_T$ in \eqref{e:leadingOrder}.

Let us consider the running maximum process. Another application of the large deviation principle \eqref{e:LD} with $\eps = R^{-(1-\gamma)}$ gives
\begin{equation*}
\liminf_{R \rightarrow + \infty } R^{-2(1-\gamma)} \log W \Bigl( \sup_{t\in [0,T]} X_t > R \Bigr) \ge -\underline{c}_T
\end{equation*}
where $\underline{c}_T
\inf \bigl\{I_{T} \left( \varphi \right): \varphi_{0}=0, \varphi\ge 0, \sup_{t \in [0,T]} \varphi_{t} > 1 \bigr\}$.
Since $W\left(\sup_{t \in [0,T]} X_{t} > R\right) \geq W(X_{t} > R)$ for every $t \leq T$, one has $\underline{c}_T \leq \inf_{t \in [0,T]} c_t = c_T$, where the last identity holds for $c_t$ is a decreasing function of $t$.
On the other hand,
$\limsup_{R \rightarrow + \infty } R^{-2(1-\gamma)} \log W \bigl( \sup_{t\in [0,T]} X_t \ge R \bigr) \le -\overline{c}_T := -\inf \bigl\{I_{T} \left( \varphi \right): \varphi_{0}=0, \varphi\ge 0, \sup_{t \in [0,T]} \varphi_{t} \ge 1 \bigr\}$.
Since
\begin{align*}
\overline{c}_T &= 
\inf \Bigl\{I_T \left( \varphi \right): \varphi_{0}=0, \varphi\ge0,
\sup_{t \in [0,T]} \varphi_{t}=1, \varphi_{t} \geq 0 \Bigr\}
\\
&\geq
\inf_{t \in [0,T]}
\inf \{ I_{t}(\phi): \phi \mbox{ is abs. cont. on $[0,t]$}, \phi_0=0, \phi\ge 0, \phi_{t}=1 \}
\\
&= \inf_{t \in [0,T]} c_t = c_T
\end{align*}
one has $\underline{c}_T = \overline{c}_T = c_T$, and the claim is proved.
\qedBl
\end{proof}
\medskip

As addressed in Section \ref{s:tailAs}, Theorem \ref{main} can also be used to obtain the leading-order asymptotics for the distribution of the time average of the process.
Such a result can be used to derive the leading-order behavior of the implied volatility of Asian options $\esp\bigl[\bigl(\frac{1}{T}\int_{0}^{T} X_{t}dt-K\bigr)^+\bigr]$ for large strike $K$.

\begin{proposition} \label{LaplaceAsian}
Estimate \eqref{e:tailAsAsian} in Theorem \ref{t:tailIntro} holds with $\nu_T>0$. 
When $\gamma=1/2$, the constant $\nu_T$ is given by
\begin{equation} \label{e:leadingAsian}
\nu_T =
\begin{cases}
\frac1{2\sigma^2} \Bigl(T\beta^2 + \frac{4 \omega^2}{T}\Bigr) 

& \text{ if $T \beta/2 < 1$ }
\\
\frac1{2\sigma^2} \Bigl(T\beta^2 - \frac{4\omega^2}{T}\Bigr) 
& \text{ if $T \beta/2 \geq 1$ } \end{cases} 
\end{equation}
where
\begin{equation} \label{e:omegaDef}
\omega = \left\{
\begin{array}{l l}
\mbox{the $\omega \in (0,\pi)$ such that
$\omega \cos\omega = T\beta/2 \sin(\omega)$}
& \text{ if $T \beta/2 < 1$}
\\
0 & \text{ if $T \beta/2= 1$}
\\
\mbox{the $\omega \in (0,\infty)$ such that
$\omega \cosh(\omega) = T\beta/2 \sinh(\omega)$}
& \text{ if $T \beta(1-\gamma)  \geq 1$.}
\end{array} \right.
\end{equation}
\end{proposition}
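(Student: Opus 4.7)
My plan is to apply Theorem \ref{main} together with the contraction principle to the continuous functional $\varphi\mapsto \tfrac{1}{T}\int_0^T\varphi_t\,dt$, and then to solve the resulting variational problem. Setting $\eps = R^{-(1-\gamma)}$, the scaling $X^\eps = \eps^{1/(1-\gamma)}X$ gives $\{\tfrac{1}{T}\int_0^T X_t\,dt\ge R\} = \{\tfrac{1}{T}\int_0^T X^\eps_t\,dt\ge 1\}$. Since $\varphi\mapsto \tfrac{1}{T}\int_0^T\varphi_t\,dt$ is sup-norm continuous on $C([0,T])$, the upper bound in \eqref{e:LD} applied to $\{\varphi:\tfrac{1}{T}\int\varphi\ge 1\}$ and the lower bound applied to $\{\varphi:\tfrac{1}{T}\int\varphi> 1\}$ reduce \eqref{e:tailAsAsian} to identifying
\[
\nu_T \;=\; \inf\Bigl\{\, I_T(\varphi) \,:\, \varphi_0=0,\ \varphi\ge 0,\ \tfrac{1}{T}\textstyle\int_0^T\varphi_t\,dt \ge 1 \,\Bigr\}.
\]
The matching of the open- and closed-set infima follows from the homogeneity $I_T(\lambda\varphi) = \lambda^{2(1-\gamma)}I_T(\varphi)$, which allows to move between $\int\varphi = T$ and $\int\varphi > T$ by an arbitrarily small dilation. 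The strict positivity $\nu_T>0$ then comes from a standard argument: if $\varphi^{(n)}$ were a minimising sequence with $I_T(\varphi^{(n)})\to 0$, the compactness of level sets of the good rate function $I_T$ (Proposition \ref{exptight}) would yield a cluster point $\varphi^\infty$ which, by lower semi-continuity, would satisfy $I_T(\varphi^\infty)=0$ and hence $\varphi^\infty\equiv 0$, contradicting the persistence of the integral constraint in the sup-norm limit.

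For the explicit constants when $\gamma=1/2$, my approach is the Lamperti substitution $\psi_t = \sqrt{\varphi_t}$. Exactly as in the proof of Proposition \ref{CIRCEVtails}, one has
\[
I_T(\varphi) = \frac{2}{\sigma^2}\int_0^T\!\!\bigl(\dot\psi_t - \tfrac{\beta}{2}\psi_t\bigr)^2\,1_{\psi_t>0}\,dt,\qquad \tfrac{1}{T}\int_0^T\!\varphi_t\,dt = \tfrac{1}{T}\int_0^T\!\psi_t^2\,dt,
\]
so the problem becomes the isoperimetric one of minimising $J(\psi):=\int_0^T(\dot\psi-\tfrac{\beta}{2}\psi)^2\,dt$ over non-negative absolutely continuous $\psi$ with $\psi_0=0$ and $\int_0^T\psi^2\,dt = T$. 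Introducing a Lagrange multiplier $\lambda$ for the integral constraint, the Euler--Lagrange equation for $L = (\dot\psi-\tfrac{\beta}{2}\psi)^2 - \lambda\psi^2$ reduces (after cancellation of the cross term in $\beta\dot\psi$) to the linear ODE $\ddot\psi + \mu\psi = 0$ with $\mu:=\lambda-\beta^2/4$; since the right endpoint is free, first-order optimality gives the natural boundary condition $\dot\psi(T) = \tfrac{\beta}{2}\psi(T)$. The condition $\psi_0=0$ then forces $\psi^*_t = B\sin(\sqrt{\mu}\,t)$ when $\mu>0$ and $\psi^*_t = B\sinh(\sqrt{-\mu}\,t)$ when $\mu<0$, and setting $\omega := \sqrt{|\mu|}\,T$ the right-endpoint condition becomes exactly \eqref{e:omegaDef}; the sign of $\mu$ is pinned down by the sign of $1-T\beta/2$, which is read off from the derivative at $\omega=0$ of each transcendental equation (and governs in which of the two equations a root exists in the relevant interval).

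The value of $J$ at the optimiser is obtained without computing the amplitude $B$: expanding the square, using $\int_0^T \psi^*\dot\psi^*\,dt = \tfrac{1}{2}\psi^*(T)^2$ together with the integration by parts $\int_0^T(\dot\psi^*)^2\,dt = \psi^*(T)\dot\psi^*(T) + \mu\int_0^T(\psi^*)^2\,dt$ (from $\ddot\psi^* = -\mu\psi^*$ and $\psi^*_0=0$), then plugging in the natural BC and the constraint $\int(\psi^*)^2 = T$, collapses $J(\psi^*)$ to $T(\mu+\beta^2/4)$; multiplication by $2/\sigma^2$ and the substitution $\mu = \pm(\omega/T)^2$ yield \eqref{e:leadingAsian}. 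The main obstacle I expect is the careful handling of the non-negativity constraint and the regime split: in the oscillatory regime the restriction $\omega\in(0,\pi)$ precisely ensures $\psi^*\ge 0$ on $[0,T]$, while the $\sinh$-profile is automatically non-negative, so the constraint $\psi\ge 0$ is never active; the critical case $T\beta/2 = 1$ corresponds to $\mu=0$ (hence $\psi^*_t\propto t$) and is recovered by continuity of the formulas across $\omega=0$. A complementary check is that $\nu_T>0$ in the hyperbolic regime, which reduces to the inequality $\omega\coth\omega > 1$, i.e.\ to $\omega < T\beta/2$, an elementary property of the unique positive root of the transcendental equation.
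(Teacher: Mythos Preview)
Your proof is correct and follows essentially the same route as the paper: apply the LDP from Theorem~\ref{main} with $\eps=R^{-(1-\gamma)}$, reduce to the variational problem $\nu_T=\inf\{I_T(\varphi):\varphi_0=0,\varphi\ge0,\tfrac1T\int\varphi\ge1\}$, and pass via the Lamperti substitution $\psi=\varphi^{1-\gamma}$ to a quadratic isoperimetric problem. The difference is one of presentation rather than strategy: the paper simply invokes \cite[Exercise~2.1.13]{DEUSCHEL} for the explicit solution of the variational problem when $\gamma=1/2$, whereas you carry out the Euler--Lagrange analysis in full (natural boundary condition at $T$, the resulting transcendental equations, and the clean collapse $J(\psi^*)=T(\mu+\beta^2/4)$ via integration by parts). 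Your homogeneity argument $I_T(\lambda\varphi)=\lambda^{2(1-\gamma)}I_T(\varphi)$ for matching the open- and closed-set infima is a neat alternative to the paper's direct check that $\inf_{\eta\ge1}J(\eta)=J(1)$, and your compactness argument for $\nu_T>0$ fills in a point the paper only asserts; note only that the reference for goodness of the rate function should be to the full LDP (exponential tightness plus the weak bounds), not to Proposition~\ref{exptight} alone.
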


\begin{remark}
\emph{Following the lines of the proof of Proposition \ref{LaplaceAsian}, one can prove the analogous asymptotic relation for a general time-average functional $\int_0^T X_t \mu(dt)$, where $\mu$ is a bounded signed measure on $[0,T]$.
One gets
\begin{equation*}
W \left( \int_{0}^{T} X_{t} \mu(dt)  \geq R \right) = e^{-R^{2(1-\gamma)}(\mathcal{V}_T+\psi(R))} \qquad \mbox{as } R \to \infty,
\end{equation*}
where $\mathcal{V}_T$ is characterised by the variational formula $\mathcal{V}_T := \inf \Bigl\{ I_{T} ( \varphi ): \int_{0}^{T} \varphi_{t} \mu(dt) \geq 1, \varphi_{t} \geq 0, \forall t \in [0,T] \Bigr\}$.}
\end{remark}

\begin{proof}[Proof of Proposition \ref{LaplaceAsian}]
An application of th large deviation principle \eqref{e:LD} with $\eps := R^{-(1-\gamma)}$ yields $\limsup_{R \rightarrow + \infty } R^{-2(1-\gamma)} \log W \bigl( \frac1T \int_0^T X_t dt \ge R \bigr)=\limsup_{\eps^2 \rightarrow 0} \eps^{2} \log W \bigl(\frac1T \int_0^T X^{\eps}_t dt \ge 1 \bigr) \le -\nu_T$, with $\nu_T = \inf \{ I_{T} \left( \varphi \right) : \varphi_{0}=0, \varphi \ge 0, \frac{1}{T} \int_{0}^{T} \varphi_{t}dt\ge1 \}$.
Proceeding as in the proof of Proposition \ref{CIRCEVtails}, and in particular exploiting the endomorphism of $AC([0,T], \R_+)$ $\varphi \to \psi=\varphi^{1-\gamma}$ together with the chain rule $\dot{\psi} = \dot{\varphi}/{\varphi^{\gamma}}1_{\varphi>0}$, one has
\begin{align*}
\nu_T &=
\inf \biggl\{ I_{T} \left( \varphi \right) : \varphi_{0}=0, \varphi \ge 0, \frac{1}{T} \int_{0}^{T} \varphi_{t}dt \ge 1 \biggr\}
\\
&=
\frac{T}{2\sigma^{2}(1-\gamma)^{2}}  \inf 
\biggl\{
\int_{0}^{1} \left( \dot{\psi}_{Tt} - \beta (1-\gamma) \psi_{Tt} \right)^{2} dt: \psi_{0}=0, \psi \ge 0, \int_{0}^{1} \psi_{Tt}^{1/(1-\gamma)} dt\ge 1 \biggr\}
\\
&=
\frac{1}{2T\sigma^{2}(1-\gamma)^{2}}
\inf \biggl\{
\int_{0}^{1} \Bigr( \frac{d}{dt}{(\psi_{Tt})} - T\beta (1-\gamma ) \psi_{Tt} \Bigr)^2 dt: \psi_{0}=0, \psi \ge 0, \int_{0}^{1} \psi_{Tt}^{1/(1-\gamma)} dt \ge1
\biggr\}
\\
&=
\inf_{\eta \ge 1}
\frac{1}{2T\sigma^{2}(1-\gamma)^{2}}
\inf \bigg\{ \int_{0}^{1} \Bigl( \dot{\phi}_{t} - T\beta (1-\gamma) \phi_{t}   \Bigr)^{2} dt: \phi_{0}=0, \phi \ge 0, \int_{0}^{1} \phi_{t}^{1/(1-\gamma)} dt=\eta
\Bigr\}
=:\inf_{\eta \ge 1} J(\eta).
\end{align*}
When $\gamma=1/2$, the latter variational problem was studied in \cite[Exercise 2.1.13]{DEUSCHEL}.
The explicit solution for $J$ provides the expression of the constant $\nu_T=\inf_{\eta \ge 1} J(\eta)=J(1)$ given in \eqref{e:leadingAsian}.
The large deviation lower bound yields $\liminf_{R \rightarrow + \infty } R^{-2(1-\gamma)} \log W \bigl( \frac1T \int_0^T X_t dt > R \bigr)=\liminf_{\eps^2 \rightarrow 0} \eps^{2} \log W \bigl(\frac1T \int_0^T X^{\eps}_t dt > 1 \bigr) \ge -J(1)=\eta_T$, and the claim is proved.
\qedBl
\end{proof}
\medskip

\textbf{Consistency check with the explicit formulae for the integrated CIR process}.
Let us consider the case $\gamma=1/2$, and compare Proposition \ref{LaplaceAsian} with the moment explosion of the integrated CIR process, corresponding to $\alpha(x)\equiv\alpha\ge0$ in condition (H2).
We focus on the (common) case of a mean-reverting drift, i.e. $\beta < 0$; computations for $\beta > 0$ are similar.
Estimate \eqref{e:tailAsAsian} establishes that $\frac1T \int_0^T X_t dt$ has finite exponential moments up to order $\nu_T$: more precisely,
\be \label{e:criticalExp}
u^* := \sup\{u > 0: \esp\Bigl[\exp\Bigl(\frac uT \int_0^T X_t dt\Bigr)\Bigr]<\infty \} = \sup\{ \nu>0: \Prob\Bigl(\frac1T \int_0^T X_t dt>x\Bigr) = O(e^{-\nu x}) \mbox{ as } x \to \infty \} = \nu_T
\ee
(for the central identity, see for example \cite[Section 4]{GulForm}); in other words, $\nu_T$ is the positive critical exponent of $\frac1T \int_0^T X_t dt$.
Critical exponents for integrated CIR have been assessed by \cite{Duf,AP,KellRes} relying (essentially) on the affine structure of the process.
It is typical to obtain $u^*$ by inverting an explicit explosion time: following \cite[Corollary 3.3]{AP}, $\esp[\exp(\frac uT \int_0^T X_t dt)]$ is always finite if $u\le T\beta^2/(2\sigma^2)$, and if $u > T\beta^2/(2\sigma^2)$, the expectation is finite for $T<T^*(u)$ and infinite for $T>T^*(u)$, where $T^*$ reads
\[
T^*(u) = 2 \frac{\pi+\arctan\left(\frac{\gamma(u)}{\beta}\right)}{\gamma(u)},
\]
where $\gamma(u)=\sqrt{2\sigma^2\frac uT - \beta^2}$.
Fixing $T$ and using the monotonicity of $T^*$, this means that the expectation becomes infinite for $u>u^*$ with $u^*$ the solution to
\be \label{e:criticalExpRoot}
\pi+\arctan\left(\frac{\gamma(u)}{\beta}\right) = \frac T2 \gamma(u)
\ee
As an equation in $\gamma$, it is easy to see that \eqref{e:criticalExpRoot} has a unique root $\gamma^*$ on $\R^+$ such that $\frac T2 \gamma^* \in (\frac\pi 2,\pi)$.
From the definition of $\gamma$,
\[
u^* = \frac 1{2\sigma^2} (T \beta^2 + T (\gamma^*)^2)
= \frac 1{2\sigma^2} \Bigl(T \beta^2 + \frac4T \Bigl(\frac{T\gamma^*}2\Bigr)^2\Bigr)
= \frac 1{2\sigma^2} \Bigl(T \beta^2 + \frac4T (\omega^*)^2\Bigr)
\]
setting $\omega^*=\frac{T\gamma^*}2$. From \eqref{e:criticalExpRoot}, $\omega^*$ is the unique solution to $\omega=\pi+\arctan\left(\frac{2\omega}{T\beta}\right)$, which is equivalent to $\tan(\omega)=\frac{2\omega}{T\beta}$ together with $\omega \in (\frac\pi 2,\pi)$: one sees that this definition coincides with the one for $\omega$ in \eqref{e:omegaDef} (noticing we are in the first case when $\beta < 0$).

\appendix

\section{Appendix} \label{a:appendix1}

We complete the proof of Proposition \eqref{expmoments} here.

\begin{proof}[Proof of Proposition \ref{expmoments}]
Let us define an auxiliary process $\overline{X}$ by
\begin{equation*}
d\overline{X}_{t} =  |\alpha|_{\infty}dt +\sigma \exp(-(1-\gamma) |\beta| t ) \overline{X}_{t}^{\gamma}dB_{t}, \quad \overline{X}_{0}=x;
\end{equation*}
after a simple application of the product rule, one has that the process $Z_{t} := \exp(|\beta| t)\overline{X}_{t} $ is a solution to
\begin{equation*}
dZ_{t} = \bigl(|\alpha|_{\infty} \exp(|\beta| t) + |\beta| Z_{t}\bigr) dt + \sigma Z_{t}^{\gamma}dB_{t}, \quad Z_{0} = x.
\end{equation*}
Since $|\alpha|_{\infty} \exp(|\beta| t) \ge |\alpha|_{\infty}$, an application of the comparison principle for SDE's \cite[Proposition 5.2.18]{KS} yields $Z_{t} \geq \tilde{X}_{t}$, for all $t \ge 0$.
Therefore, if $\overline{X}^{2(1-\gamma)}$ admits (some) exponential moments, so does $Z^{2(1-\gamma)}_{t}$ and by comparison $\tilde{X}^{2(1-\gamma)}_{t}$.
In this sense, the process $\overline{X}$ is not covered by Proposition 3.3 in \cite{DEMARCO}, since the latter deals with the case of a diffusion coefficient that does not depend on time (see \cite[Eq. (3.1)]{DEMARCO}); nonetheless, the essential condition that \cite[Prop 3.3]{DEMARCO} relies on is the presence of a non-strictly positive slope coefficient, say $b$ in the drift term $a+bX$ (cf. \cite[Eq. (3.3)]{DEMARCO}).
Since this is the case for the process $\overline{X}$ (which has zero slope coefficient $b$), it is straightforward to extend the proof to the present setting: in particular, in the spirit of Lamperti's change-of-variable argument, one still defines the function $\varphi(x)=\int_0^{x}\frac1{\sigma x^{\gamma}}=\frac1{\sigma(1-\gamma)}x^{1-\gamma}$ and studies the process $\tilde{\varphi}(X_t)$, where the function $\tilde{\varphi}$ is a modification of $\varphi$ identically null around zero. 
It\^o's formula shows that $\tilde{\varphi}(X_t)$ is an It\^o process with bounded quadratic variation and a bounded drift term; the existence of quadratic exponential moments for $\tilde{\varphi}(X_t)$, then, is a consequence of Dubins--Schwarz time-change argument and Fernique's theorem.
As a consequence, there exist $c',C > 0$ such that $\sup_{t \le T} \esp[\exp(c' \overline{X}^{2(1-\gamma)}_{t})] \le C$; it follows $\sup_{t \le T} \esp[\exp(c \tilde{X}^{2(1-\gamma)}_{t})] \le \sup_{t \le T} \esp[\exp(c Z^{2(1-\gamma)}_{t})] \le C$ with $c:=c' \exp(-2|\beta|(1-\gamma)T)$, and the claim is proved.
\qedBl
\end{proof} 
\medskip

We report the statement given in \cite[Chap 2, Thm 2.13]{STROOCKVARADAHAN}.

\begin{lemma}[Garsia-Rodemich-Rumsey's Lemma] \label{l:GarsiaLem}
Let $p$ and $\Psi$ be continuous, strictly increasing functions  on $[0, +\infty)$ such that $p(0) = \Psi(0)=0$ and $\lim_{t \rightarrow + \infty  } \Psi(t)= + \infty $. If $\omega\in \Omega $ is such that: 
\begin{equation}
\int_{0}^{T} \int_{0}^{T} \Psi \left( \frac{|\omega_{t} - \omega_{s} |}{p(|t-s|)}\right) ds dt \leq K,
\end{equation}
then
\begin{equation}
|\omega_{t} - \omega_{s} | \leq 8 \int_{0}^{|t-s|} \Psi^{-1} \left(  \frac{4K}{u^2}\right)dp(u).
\end{equation}
\end{lemma}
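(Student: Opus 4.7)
The plan is to carry out the classical dyadic bridging argument that underlies the GRR inequality. First set $B(u):=\int_0^T \Psi\bigl(|\omega_u-\omega_v|/p(|u-v|)\bigr)\,dv$; by Fubini, $\int_0^T B(u)\,du \le K$. The key elementary ingredient is a Chebyshev-type observation applied at two levels: on any sub-interval $I\subset[0,T]$, the set where $B$ exceeds $2K/|I|$ has measure at most $|I|/2$, so on at least half of $I$ one has $B(u)\le 2K/|I|$; and for such a ``good'' $u$, the non-negative integrand $v\mapsto \Psi(|\omega_u-\omega_v|/p(|u-v|))$ exceeds $4K/|I|^2$ on a subset of $I$ of measure at most $|I|/2$.

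Fix $0\le s<t\le T$, set $d:=t-s$ and define the dyadic scales $d_n:=d/2^n$. I would construct inductively a sequence $(t_n)_{n\ge 0}$ with $t_0=t$ and $t_n\to s$, by selecting at each step $t_{n+1}$ inside nested sub-intervals of length comparable to $d_{n+1}$ accumulating at $s$, in such a way that $t_{n+1}$ lies in the intersection of the two Chebyshev ``good'' sets (the one bounding $B(t_{n+1})$, useful at the next step, and the one bounding the integrand of $B(t_n)$ at the point $v=t_{n+1}$). Since each good set has measure at least $d_{n+1}/2$ inside a window of length $d_{n+1}$, their intersection is non-empty. The selected $t_{n+1}$ then satisfies $\Psi\bigl(|\omega_{t_n}-\omega_{t_{n+1}}|/p(|t_n-t_{n+1}|)\bigr)\le 4K/d_{n+1}^2$ and hence
\[
|\omega_{t_n}-\omega_{t_{n+1}}| \;\le\; p(d_n)\,\Psi^{-1}\!\bigl(4K/d_{n+1}^2\bigr).
\]

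Telescoping gives $|\omega_t-\omega_s|\le \sum_{n\ge 0} p(d_n)\,\Psi^{-1}(4K/d_{n+1}^2)$, and using the monotonicity of $p$ and $\Psi^{-1}$ together with $d_n - d_{n+1}= d_{n+1}$, this sum is a dyadic Riemann sum for the Stieltjes integral $\int_0^{d}\Psi^{-1}(4K/u^2)\,dp(u)$; a careful accounting of the dyadic factors of $2$ produces the advertised constant $8$. The main technical obstacle is precisely the bookkeeping in the inductive construction: the nested sub-intervals must be chosen so that the two Chebyshev good sets always intersect, the sequence $(t_n)$ must actually accumulate at $s$ (and a symmetric construction from the right at $t$, or a midpoint trick, is needed to link the two), and the discrete sum must be converted into the continuous Stieltjes integral with the precise numerical constant. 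The hypotheses that $p$ and $\Psi$ are continuous, strictly increasing, vanish at $0$, and that $\Psi(+\infty)=+\infty$, are exactly what make $\Psi^{-1}$ well defined on $[0,\infty)$ and the Stieltjes integral meaningful.
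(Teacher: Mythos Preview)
The paper does not actually prove this lemma: it is stated in the Appendix with the remark ``We report the statement given in \cite[Chap 2, Thm 2.13]{STROOCKVARADAHAN}'' and is then used as a black box to derive Proposition~\ref{GRRI}. So there is no ``paper's own proof'' to compare against.

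That said, your outline is the classical dyadic bridging argument and is essentially correct. One point worth flagging: you cannot simply set $t_0=t$, since nothing guarantees that $B(t)$ is finite (the hypothesis only controls $\int_0^T B(u)\,du$). The standard remedy, which you allude to with the ``symmetric construction or midpoint trick'', is to first use Chebyshev on $[s,t]$ to pick a good interior point $t_0$ with $B(t_0)\le 2K/d$, run the dyadic construction from $t_0$ down to $s$ and (symmetrically) from $t_0$ up to $t$, and then pass to the limit using the continuity of $\omega$; this is where the factor $8$ (rather than $4$) ultimately comes from. With that caveat, the sketch is sound.
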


Lemma \ref{l:GarsiaLem} allows us to prove Proposition \ref{GRRI}:

\begin{proof}[Proof of Proposition \ref{GRRI}]
Assume that \eqref{e:Garsia} holds true with the left hand side replaced by $K>0$.
Applying Lemma \ref{l:GarsiaLem} with the choice of functions $\Psi(y)=\exp(\varepsilon^{-2} y)-1, p(y) = \sqrt{y}$, one has for all $s,t$
\[
\begin{aligned}
\left| \omega_{t}- \omega_{s} \right| \leq 8\int_{0}^{|t-s|} \Psi^{-1} \left(  \frac{4K}{u^2} \right) dp(u)
&= 
8\varepsilon^{2}\int_{0}^{|t-s|} \log \left(  \frac{4K}{u^2} +1 \right) dp(u)
\\
&\leq 
8 \varepsilon^{2} \left[ \int_{0}^{|t-s|}  \log \left(  4K + T^2 \right) dp(u)
+ \int_{0}^{|t-s|} \log \left( u^{-2} \right) dp(u) \right] 
\\
&\leq
8 \varepsilon^{2}\left[ \sqrt{|t-s|} \log \left(4K+T^{2} \right) 
+ \sqrt{|t-s|} \left(4-2\log \left(|t-s|\right) \right) \right].
\end{aligned}
\]
Dividing on both sides by $(t-s)^{\eta}$ and taking suprema we obtain
\begin{equation*}
\left\| \omega \right\|_{\eta} \leq 8 \varepsilon^{2} \left(\log \left(4K+T^{2} \right)T^{1/2-\eta} +4T^{1/2-\eta} +  K_{\eta} \right).
\end{equation*}
Since the right hand side in the last estimate is $K^{-1}_{\varepsilon, \eta } \left( K \right)$, \eqref{e:Garsia} yields \eqref{e:GarsiaHoldNorm}.
\qedBl
\end{proof}

Finally, we prove Lemma \ref{WeakConv}.
\begin{proof}[Proof of Lemma \ref{WeakConv}]
Denote $T^{\varepsilon}$ the stopping time
\begin{equation}\label{stopping}
T^{\varepsilon} (\omega) = \inf \left\{ t \geq 0 :  \omega_{t} \leq  \frac12 \varepsilon x^{1-\gamma} \right\}.
\end{equation}
We can apply It\^{o} formula to the function $f(x) = x^{1-\gamma}$ up to time $T^{\varepsilon}(Y^{\varepsilon,h})$, and obtain
\begin{equation}\label{yepsh}
Y^{\varepsilon,h}_{t}  - \varepsilon x^{1-\gamma}=  \int_{0}^{t} \tilde{b}^{\varepsilon} (Y^{\varepsilon,h}_{s})ds + \sigma(1-\gamma) h_{t} + \varepsilon \sigma(1-\gamma) B_t,
\quad
\forall \: t \leq T^{\varepsilon}(Y^{\varepsilon,h}),
\quad
a.s.
\end{equation}
where $\tilde{b}^{\varepsilon}$ is given by
\begin{equation}\label{singulardrift}
\tilde{b}_{\varepsilon}(y) := (1-\gamma)\varepsilon^{\frac{1}{1-\gamma}}\alpha(\varepsilon^{-\frac{1}{(1-\gamma)}}y^{\frac{1}{(1-\gamma)}})\frac1{y^{\frac{\gamma}{1-\gamma}}} - \frac{\sigma^{2} \gamma(1-\gamma)}{2}\varepsilon^{2} \frac1 y + \beta(1-\gamma) y
\end{equation}
We need to prove
\begin{equation} \label{lawcon}
\lim_{\varepsilon \rightarrow 0 } W \left( \sup_{t \in [0,T]} | Y^{\varepsilon,h}_{t}- \mathcal{S}_{0}(h)_{t}| \leq R \right) = 1
\qquad \forall R>0.
\end{equation}
In order to simplify the notation, there is no ambiguity in writing $Y$ instead of $Y^{\varepsilon,h}$ inside this proof.

\emph{Step 1.}
We first prove \eqref{lawcon} under the assumption
\begin{equation}\label{uniform}
 k:=\inf_{t \in [0,T]}\dot{h}_{t} >0
 \end{equation}
Let us fist show that 
\begin{equation} \label{singularity}
\lim_{ \varepsilon \rightarrow 0}  W \left( T^{\varepsilon} \left( Y^{\varepsilon,h} \right) \leq T \right) =0
\end{equation}
A direct computation shows that there exist a constant $c>0$ depending on $x,\sigma,\alpha(\cdot)$ such that: 
\begin{equation}\label{inf}
\inf_{y \geq \frac12 \varepsilon x^{1-\gamma}}
\Bigl\{
\tilde{b}^{\varepsilon} (y)-\beta(1-\gamma) y
\Bigr\}
\geq - c \varepsilon.
\end{equation}
Define $(Z_{t})_{t \in [0,T]}$
by
\begin{equation}
 Z_{t} = \varepsilon x^{1-\gamma} + \left( -c\varepsilon +\sigma (1-\gamma) k \right) t + \beta(1-\gamma) \int_{0}^{t} Z _{s}ds + \varepsilon \sigma(1-\gamma) B_{t}     
 \end{equation}
Using \eqref{inf}, it follows from the comparison principle for SDEs that
\begin{equation}\label{comparison}
\quad Y_{t} \geq Z_{t} \quad  \forall \: t \leq T^{\varepsilon}(Y),
\quad a.s.
\end{equation}
We claim that
\be \label{zcrossing}
W \left( T^{\varepsilon} \left( Z\right) \leq T \right) \to 0
\ee
holds true.
Since $W \left( T^{\varepsilon} \left( Y \right) \leq T \right) \leq W \left( T^{\varepsilon} \left( Z\right) \leq T \right)$ by \eqref{comparison}, then \eqref{singularity} holds.
%
We prove \eqref{zcrossing} later on.
Now, it follows from the definition of $S_0(h)_t$ and an application of Gronwall's Lemma that 
\begin{equation*}
 |Y_{t}  -\mathcal{S}_{0}(h)_{t}|  \leq \varepsilon
\biggl(
c+ \sigma(1-\gamma) \sup_{t \in [0,T]}| B_{t}|
\biggr)
e^{|\beta|(1-\gamma)T}
=:\Theta_T
\qquad \forall t\leq T^{\varepsilon} \left( Y \right),
\end{equation*}
therefore, for any $R >0$ and $\varepsilon$ small enough
\begin{align*}
W \biggl( \sup_{t \in [0,T^{\eps}] } |Y_{t}  -\mathcal{S}_{0}(h)_{t}|  \leq R \biggr)
& \geq
W \biggl( \biggl\{ \sup_{t \in [0,T^{\eps}(Y)] }|Y_{t} -\mathcal{S}_{0} (h)_{t}| \leq 
\Theta_T
\biggr\}
\cap
\left\{ \Theta^{\eps}_T
\leq R \right\}
\biggr)
\\
&\geq
W \left( \left\{ T^{\varepsilon} (Y) \geq T \right\}
\cap  \left\{
\Theta^{\eps}_T
\leq R \right\} \right).
\end{align*} 
Since both the events in the right hand side of the last inequality have probability converging to $1$, \eqref{lawcon} follows, and Lemma \ref{WeakConv} is proved under condition \eqref{uniform}.

\emph{Step 2.}
We assume that \eqref{uniform} holds only on the time interval $[0,\rho]$, that is $\dot{h}_{t} \geq k$ for every $t \leq \rho$, for some $k, \rho>0$.
Repeating the  argument of Step 1 with $T= \rho$, we have
\begin{equation}\label{escape}
\lim_{\varepsilon \rightarrow 0} W\biggl(\sup_{t \in [0,\rho]}  | Y_{t}-S_{0}(h)_{t} | \leq R^{'} \biggr) =1, \quad \forall R^{'}>0
\end{equation}    

We apply estimate \eqref{escape} together with a localization argument.
Define a time-shift operator $\tau_{\rho} \omega$, for every $\omega \in \Omega$, by $ (\tau_{ \rho } \omega)_{t} = \omega_{\rho+t}$ for all $t \in [0, T-\rho]$.
For any fixed $y>0$, denote $X^{y,\rho}$ the strong solution of the SDE:
\begin{equation*}
  X^{y,\rho}_{t} = y^{\frac{1}{(1-\gamma)}} + \int_{0}^{t} b^{\varepsilon}(X^{y,\rho}_{s}) + \sigma |X^{y,\rho}_{s}|^{\gamma} \dot{h}_{\rho+s} ds + \varepsilon \sigma \int_{0}^{t} |X^{y,\rho}_{s}|^{\gamma} dB_{s}
\end{equation*}
and set
\begin{equation*}
Y^{y,\rho}:= (X^{y,\rho})^{1-\gamma}.
\end{equation*}
Note that $Y^{y,\rho}$ is well defined since $X^{y,\rho} \geq 0$ for all $t \in [0,T]$, $W$-almost surely. 
If $h=0$ the non negativity of the trajectories of $X^{y,\rho}$ follows from an application Proposition 3.1 in \cite{DEMARCO}
and extends to $h \in H $ by an application of the Girsanov theorem.
By definition of $Y$ and $Y^{y,\rho}$, the Markov property yields
\begin{equation*}
 \mathbb{E} ( f(\tau_{\rho}Y) | \mathcal{F}_{\rho}) = \mathbb{E} ( f(Y^{Y_{\rho},\rho}) )
\end{equation*}
By the continuity of the map $(h,y) \mapsto \mathcal{S}_{y}(h)$ we can choose $R'>0$ such that
\begin{equation}\label{flowcont}
 \sup_{y \in B(S_{0}(h)_{\rho},R')} \sup_{t \in [0,T-\rho]}  | \mathcal{S}_{y}(\tau_{\rho} h)_{t} -  \mathcal{S}_{\mathcal{S}_{0}(h)_{\rho}}(\tau_{\rho}h)_{t} | \leq \frac{R}{2} 
\end{equation}
Therefore, using \eqref{flowcont} the following inclusion of events holds (assume w.lo.g $R' \leq \frac{R}{2}$):
\begin{equation*}
\biggl\{ \sup_{t \in [0,T]} |Y_{t} - \mathcal{S}_{0} (h)_{t} | \leq R
\biggr\}
\supseteq
\biggl\{ \sup_{[0,\rho]}|Y_{t} - \mathcal{S}_{0}(h)_{t} | \leq R'  \biggr\}
\cap
\biggl\{ \sup_{t\in[0,T-\rho]} |\tau_{\rho}(Y)_{t} -  \mathcal{S}_{Y_{\rho}}(\tau_{\rho}h)_{t} | \leq \frac{R}{2}
\biggr\}   
\end{equation*} 	
Applying the Markov property
\begin{align}\label{Markov}
W \biggl( \sup_{t \in [0,T]} |Y_{t} - \mathcal{S}_{0}(h)_{t} |  \leq R \biggr) & \geq \mathbb{E} \biggl( \mathbb{1}_{ \{ \sup_{t \in [0,\rho]}|Y_{t} - \mathcal{S}_{0}(h)_{t} | \leq R' \}}  W \biggl( \sup_{t \in [0,T-\rho]} |Y^{ Y_{\rho}, \rho }_{t} -  S_{Y_{\rho}}(\tau_{\rho}h)_{t} | \leq \frac{R}{2}  \biggr) \biggr) \nonumber \\
 & \geq W \biggl( \sup_{t \in [0,\rho]}|Y_{t} - \mathcal{S}_{0}(h)_{t} | \leq R' \biggr) \inf_{y \in B(\mathcal{S}_{0} (h)_{\rho} ,R') } W \biggl( \sup_{t \in [0,T-\rho]} | Y^{y,\rho}_{t} -  \mathcal{S}_{y}(\tau_{ \rho}h)_{t}|  \leq \frac{R}{2} \biggr)
\end{align}
We want to show that
\begin{equation}\label{classicallocal}
 \lim_{\varepsilon \rightarrow 0 }\inf_{ y \in B \left( S_{0} \left(h \right)_{\rho},R' \right) } W \left( \sup_{t \in [0,T-\rho]} | Y^{y,\rho}_{t} -   \mathcal{S}_{y} (\tau_{ \rho } h)_{t} |  \leq \frac R2 \right) = 1 
\end{equation}
It follows from the hypothesis $\mathcal{S}_{0}(h)_{t} >0  \ \forall t>0$ and the continuity of the map $(y,h) \mapsto \mathcal{S}_{y}(h)$ that, if $R',R$ are small enough
\begin{equation}\label{away} 
y^{*}:=\inf_{y \in B\left( \mathcal{S}_{0} \left(h \right)_{\rho},R' \right)} \inf_{t \in [0,T-\rho]} \mathcal{S}_{y} ( \tau_{\rho} h )_{t} -\frac{R}{2} >0.
\end{equation}
Define $U^{y,\rho}$
as the unique strong solution of the SDE:
\begin{equation*}
U^{y,\rho}_{t}   = y + \int_{0}^{t} \bigl( \tilde{b}^{\varepsilon}_{u} (U^{y,\rho}_{s}) + \sigma(1-\gamma) \dot{h}_{s+\rho} \bigr)ds
+ \varepsilon \sigma(1-\gamma) B_{t},
\end{equation*}
where
\begin{equation*}
\tilde{b}^{\varepsilon}_{u}(y) = \begin{cases} 
\tilde{b}^{\varepsilon}(y)  & \text{if $ y \geq y^*  $ }
\\
 \beta(1-\gamma)y + (1-\gamma)\varepsilon^{\frac{1}{1-\gamma}}\alpha(\varepsilon^{-\frac{1}{(1-\gamma)}}(y^*)^{\frac{1}{(1-\gamma)}})
\frac1{(y^*)^{\frac{\gamma}{1-\gamma}}} - \frac{\sigma^{2} \gamma(1-\gamma)}{2}\varepsilon^{2} \frac1{y^{*}} & \text{if $  y < y^{*}$.}
\end{cases}
\end{equation*}
Then one has
\begin{equation}
W \biggl( \sup_{t \in [0,T - \rho ]} | Y^{y,\rho}_{t} -  S_{y}(\tau_{ \rho }h)_{t} |  \leq \frac{R}{2} \biggr)
=
W \biggl( \sup_{t \in [0, T-\rho]} | U^{y,\rho}_{t} - S_{y}(\tau_{\rho}h )_{t}| \leq \frac{R}{2} \biggr).
\end{equation}
Now observing that $\tilde{b}^{u}_{\varepsilon} $ is globally Lipschitz continuous $\forall \varepsilon>0$ and $C^{\varepsilon} := \sup_{y \in \mathbb{R}}|\tilde{b}^{\varepsilon}_{u} (y) -\beta(1-\gamma)y| \rightarrow 0$, an application of Gronwall's lemma gives
\begin{equation}\label{finest}
\mathbb{E} \biggl( \sup_{t \in [0,T-\rho]} \left| U^{y,\rho}_{t} - \mathcal{S}_{y}(\tau_{\rho} h)_{t} \right| \biggr)
\leq  (C^{\varepsilon}T + 2\varepsilon \sigma(1-\gamma)\sqrt{T} )\exp( |\beta (1-\gamma)| T).
\end{equation}
By letting $\varepsilon \rightarrow 0$  and applying the Markov inequality, observing that the right hand side of \eqref{finest} does not depend on $y$, we have proven \eqref{classicallocal}.
By letting $\varepsilon \rightarrow 0$ in \eqref{Markov} and applying \eqref{escape} and \eqref{classicallocal}, the proof of Lemma \ref{WeakConv} is complete. 
\qedBl
\end{proof}

\emph{Proof of \eqref{zcrossing}}.
Observe that $\tilde{Z}:=\frac{1}{\varepsilon}Z$ is an Ornstein-Uhlenbeck process,
\begin{equation}
\tilde{Z}_t = x^{1-\gamma} + \mu_{\varepsilon} t + \beta(1-\gamma)\int_{0}^{t}\tilde{Z}_{s}ds + \sigma(1-\gamma) B_t
\end{equation}
where $\mu_{\eps}:= \frac{1}{\varepsilon}(-c \varepsilon + \sigma(1-\gamma)k) = -c + \frac{\sigma(1-\gamma)k}{\eps}$.
It is immediate by the definition of $\tilde{Z}$ that $W \left( T^{\varepsilon}(Z) \leq T \right) = W \left(\inf_{t \in [0,T]} \tilde{Z} \leq \frac{x^{1-\gamma}}{2} \right)$.
The explicit representation of $\tilde{Z}$ reads
\begin{equation}
\tilde{Z}_t := x^{1-\gamma}e^{\beta(1-\gamma) t}
+ f_{\eps}(t)
+ \sigma(1-\gamma)\exp(\beta(1-\gamma) t) \int_{0}^{t} \exp(- \beta(1-\gamma) s) dB_s
\end{equation}
with $f_{\eps}(t) = -\frac{\mu_{\varepsilon} (1-\exp(\beta(1-\gamma) t) )}{\beta(1-\gamma)}$.
Consider a deterministic time $\tau_{\varepsilon}$ with $\tau_{\eps} \to 0$ as $\eps \to 0$, to be chosen precisely later on.
Noting that $f_{\eps}$ is a decreasing function, for $\tau_{\varepsilon} \le t \le T$ one has
\begin{equation}
\tilde{Z}_t \geq  
f_{\eps}(\tau_{\eps})
- \sigma(1-\gamma)
\Bigl| \int_{0}^{t} \exp(-\beta(1-\gamma) s ) dB_s
\Bigr|;
\end{equation}
hence, using Markov's inequality and Doob's inequality
\[
\begin{aligned}
W \left( \inf_{t \in [\tau_{\varepsilon},T]} \tilde{Z}_t  \leq x^{1-\gamma}/2 \right)
&\leq
W \left( \sup_{t \in [\tau_{\varepsilon},T]}
\sigma(1-\gamma)
\Bigl| \int_{0}^{t} \exp(-\beta(1-\gamma) s) dB_s \Bigr|
\geq
f_{\eps}(\tau_{\eps}) - x^{1-\gamma}/2 \right)
\\ 
&\leq C \sigma(1-\gamma)
\left( f_{\eps}(\tau_{\eps}) - x^{1-\gamma}/2  \right)^{-1} \left(\int_{0}^{T}\exp(-2\beta(1-\gamma) s)ds\right)^{\frac{1}{2}}.
\end{aligned}
\]
Now, the choice $\tau_{\eps} = \sqrt{\eps}$ gives $f_{\eps}(\tau_{\eps}) \sim \mu_{\eps} \tau_{\eps} \to \infty$ as $\eps \to 0$, so that $\left( f_{\eps}(\tau_{\eps}) - x^{1-\gamma}/2 \right)^{-1} \to 0$.
On the other hand, $\inf_{t \in [0,\tau_{\varepsilon}]} \tilde{Z}_t \to x^{1-\gamma}$ a.s. as $\eps \to 0$, hence $W \Bigl( \inf_{t \in [0,\tau_{\varepsilon}]} \tilde{Z}_t \leq x/2 \Bigr) \to 0$ as $\eps \to 0$, and the claim is proven.

\bibliographystyle{plain}
\bibliography{ReferencesConfortiDMDeu}

\end{document}